\numberwithin{equation}
{section}
\newcommand{\R}{\mathbb{R}}
\newcommand{\N}{\mathbb{N}}
\newcommand{\E}{\mathbb{E}}
\newcommand{\dd}{\text{d}}
\newtheorem{thm}{Theorem}[section]
\newtheorem{defn}[thm]{Definition}
\newtheorem{lem}[thm]{Lemma}
\newtheorem{cor}[thm]{Corollary}
\newtheorem{assumption}[thm]{Assumption}
\begin{document}

\title{Order-one explicit 
approximations of random periodic solutions of semi-linear SDEs with multiplicative noise
\footnotemark[2] \footnotetext[2]{This work was supported by Natural Science Foundation of China (12471394, 12071488, 12371417). YW would like to acknowledge the support of the Royal Society through the International Exchanges scheme IES\textbackslash R3\textbackslash 233115. 
The authors want to thank Yinghao Huang for his suggestions and comments improving the manuscript.
                \\
                E-mail addresses:
 y.j.guo@foxmail.com, x.j.wang7@csu.edu.cn, 
yue.wu@strath.ac.uk. 
                }}
\author{Yujia Guo$^{a}$, Xiaojie Wang$^{a}$, and Yue Wu$^{b}$
\\
\footnotesize $^a$ School of Mathematics and Statistics, HNP-LAMA, Central South University, Changsha, \\
\footnotesize Hunan, P. R. China \\
\footnotesize $^b$ Department of Mathematics and Statistics, University of Strathclyde, Glasgow G1 1XH, UK} 
\date{\today}

\maketitle
\begin{abstract}
This paper is devoted to
order-one explicit approximations of random periodic solutions 
to multiplicative noise driven stochastic differential equations (SDEs) with non-globally Lipschitz coefficients. The existence of the random periodic solution is demonstrated as the limit of the pull-back of the discretized SDE. A novel approach is introduced to analyze mean-square error bounds of the proposed scheme that does not depend on a prior high-order moment bounds of the numerical approximations. Under mild assumptions, the proposed scheme is proved to achieve an expected order-one mean square convergence in the infinite time horizon.
Numerical examples are finally
provided to verify  the theoretical results.

{\bf Keywords:} 
Random periodic solution;
Stochastic differential equations;
Pull-back;
Explicit Milstein method;
Order-one mean square convergence.

{\bf AMS subject classification:} {\rm\small 37H99, 60H10, 60H35, 65C30.}\\
\end{abstract}

\maketitle

\section{Introduction} Random dynamical systems are widely used across disciplines such as physics, biology, climatology, and finance to address uncertainties and random effects, playing a key role in studying natural phenomena. Random periodic solutions (RPS), as an example of mathematical tools recently developed in this field, are to study the long-time behavior of stochastic differential equations (SDEs) that also exhibit certain type of periodicity \cite{feng2011pathwise}.

The concept of random periodic solutions for a $C^1$-cocycle was first introduced by Zhao and Feng\cite{zhao2009random}, and later elaborated on by Feng, Zhao, and Zhou \cite{feng2011pathwise} for semiflows. Their research has driven further advancements in the study of a range of topics related to both autonomous and non-autonomous SDEs. This includes exploring the existence of random solutions admitted by non-autonomous SPDEs with additive noise \cite{feng2012random}, 
the anticipation of random solutions of SDEs with multiplicative linear noise \cite{feng2016anticipating}, periodic measures and ergodicity \cite{feng2020random}, among other areas.

Demonstrating random periodicity through numerical approximations is equally important, as random periodic solutions generally do not have a closed-form expression.
The relevant research on the numeric 
of SDEs has made rapid progress over last decades
(see \cite{hutzenthaler2020perturbation,
hutzenthaler2015numerical,
hutzenthaler2011strong,
kloeden1992stochastic,
milstein2004stochastic} and references therein).
It is worth noting that the numerical treatment for random periodic solutions is discussed over an infinite time horizon.
The initial study \cite{feng2017numerical} utilized traditional numerical techniques, such as the Euler-Maruyama method and a modified Milstein method, to estimate RPS in a dissipative system with global Lipschitz coefficients.
Moradi et al.\cite{moradi2024random} further this topic by simulating RPS using Euler and Milstein methods with weaker conditions on the drift term. 

Wu \cite{wu2023backward} investigated the existence and uniqueness of RPS of an additive SDE with a one-sided Lipschitz condition, and analyzed the order-half convergence of its numerical approximation using the backward Euler method.
Subsequently, Guo, Wang, and Wu
\cite{guo2023order} lifted the convergence order from half to one with a more relaxed condition  to \cite{wu2023backward}.
Chen, Cao, and Chen \cite{chen2024stochastic} studied stochastic theta methods for approximations of RPS and demonstrated that the mean square convergence order is half for SDEs with multiplicative noise and one for SDEs with additive noise under non-globally Lipschitz conditions.

Nevertheless, implicit methods are computationally expensive,
as it requires solving an implicit 
equation at each step \cite{andersson2017mean,higham2002strong,wang2020mean,wang2023mean}.
To save computational costs, many researchers 
turn to explicit time-stepping schemes for SDEs with non-globally Lipschitz coefficients (see, e.g., \cite{hutzenthaler2012strong,mao2015truncated,li2020explicit,wang2013tamed,sabanis2016euler,beyn2016stochastic,beyn2017stochastic,kelly2018adaptive}, to just mention a few).
Recently, some attempts have also been made to
numerically approximate random periodic solutions of SDEs under non-globally Lipschitz conditions.
For example,
an explicit time discretization scheme,
called
the projected Euler method 
(PEM), was introduced for 
this purpose \cite{guo2024projected}.
The mean square convergence rate of this approximation scheme has been proved to be 
order half for SDEs with multiplicative noise and order one for SDEs with additive noise.

In this paper, we further leverage the advantage of projected method under a weak condition (see Section \ref{sec_PMM:Random Periodic Solutions of SDEs})
and introduce an explicit Milstein schemes, termed projected Milstein method (PMM),
which is 
strongly convergent with order
one for SDEs with multiplicative noise over an infinite time horizon.

To be more precise, given $W:\mathbb{R} \times \Omega \rightarrow
\mathbb{R}^{m} $ a standard two-sided Wiener process on the probability space $(\Omega,\mathcal{F},\mathbb{P})$, where
the filtration is defined as
$\mathcal{F}^{t}_{s}:=
\sigma\{W_{u}-W_{v}:s\leq v\leq u\leq t\}$ and $\mathcal{F}^{t}
=\mathcal{F}^{t}_{-\infty}
=\bigvee_{s\leq t}\mathcal{F}^{t}_{s}$.
We consider the  following semi-linear stochastic differential equations (SDEs) with multiplicative noise:

\begin{equation}
\label{eq_PMM:Problem_SDE}
        \left\{
        \begin{aligned}
        \dd X_{t}^{t_{0}} 
         & = 
         \big(
         A X_{t}^{t_{0}}
         +
         f(t, \,X_{t}^{t_{0}})
         \big)
         \dd t
         + 
         g(t,
         \,X_{t}^{t_{0}})
         \, \dd W_t,
   \quad t \in (t_{0},T],\\
		 X_{t_{0}}^{t_{0}} & = \xi,
	\end{aligned}\right.
\end{equation}
where 
$A \in \R^{d \times d}$ is a negative-definite matrix,
$f:\mathbb{R} \times \mathbb{R}^{d}\rightarrow \mathbb{R}^{d}$,  $g:\mathbb{R} \times \mathbb{R}^{d} \rightarrow  \mathbb{R}^{d \times m}$ are continuous functions.
We denote the process $X_{t_1}^{t_0}$ evaluated at $t_1$, starting from $t_0$.
Additionally,
The random initial value $\xi$ is assumed to be $\mathcal{F}^{t_{0}}$-measurable. 
Note that by the variation of constant formula, the solution of \eqref{eq_PMM:Problem_SDE} can be written as
\begin{equation}
    \label{eq:variation of comstant formula}
    X^{t_{0}}_{t}(\xi)=
    e^{A (t-t_{0})}\xi
    +\int_{t_{0}}^{t}
    e^{A (t-s)} f(s,X^{t_{0}}_{s})
    \,\dd s
    +\int_{t_{0}}^{t}
    e^{A(t-s)} 
    g(s,X^{t_{0}}_{s})
    \,\dd W_{s}.
\end{equation}
Given a stepsize $h \in (0,1)$, we define the projection operator $\Phi:\R^d \rightarrow \R^d$ by
\begin{equation} 
\label{eq_PMM:projecion_funcaion}
    \left\{
    \begin{aligned}
    \Phi(x)
    &:=
    \min
    \Big\{
    1,
    h^{-\frac{1}{2\gamma}}
    \|x\|^{-1}
    \Big\}
    x,
    \quad x \neq 0,
    \\
    \Phi(x)
    &=0,
    \quad x = 0,
    \end{aligned}
    \right.
\end{equation}
where
$\gamma \geq 1$. 
The proposed projected Milstein method for 
SDEs \eqref{eq_PMM:Problem_SDE} starting at $-k\tau$ is given by
\begin{equation}
\label{eq_PMM:the_projected_milstein_method}
\left\{
\begin{aligned}
   &{\tilde{X}}
   ^{-k\tau}_{-k\tau+(j+1)h}
   =
   \Phi
   \big(
   {\tilde{X}}
   ^{-k\tau}_{-k\tau+jh}
   \big)
   +A h
   \Phi
   \big(
   {\tilde{X}}^{-k\tau}_{-k\tau+jh}
   \big)
   + hf
   \Big( 
   jh, 
   \Phi
   \big(
   {\tilde{X}}^{-k\tau}_{-k\tau+jh}
   \big)
   \Big) \\
   &\qquad\qquad\qquad+
   g \Big(
   jh,
   \Phi
   \big(
   {\tilde{X}}
   ^{-k\tau}
   _{-k\tau+jh}
   \big)
   \Big)
   \Delta W_{-k\tau+jh}
   +
   \sum_{r_1,r_2=1}^{m}
   \mathcal{L}^{r_1}
   g_{r_2}
   \Big(
   jh,
   \Phi
   \big(
   {\tilde{X}}^{-k\tau}_{-k\tau+jh}
   \big)
   \Big)
   \Pi
   ^{-k\tau+jh
   ,-k\tau+(j+1)h}
   _{r_1,r_2},\\
   &\tilde{X}^{-k\tau}_{-k\tau}
   =
   \xi,
\end{aligned}\right.
\end{equation}
for all $j \in \mathbb{N}$, 
where 
$\Delta W_{-k\tau+jh}:=
W_{-k\tau+(j+1)h}-W_{-k\tau+jh}$,
$\mathcal{L}^{r_1}g_{r_2}$ is a function from $ \R \times \R^d \rightarrow \R^d$ for $r_1,r_2 \in [m] ([m]:=
\{1,...,m\})$,
defined by
\begin{equation}
\label{eq_PMM:L_r}
    \mathcal{L}^{r_1}g_{r_2}(t,x)
    :=
    \sum_{k=1}^d
    g_{k,r_1}
    \dfrac{\partial g_{r_2}(t,x)}{\partial x^k}
    =
    \dfrac{\partial g_{r_2}(t,x)}{\partial x}
    g_{r_1}(t,x),
    \quad 
    t \in [0,\tau),
    x \in \R^d.\\
\end{equation}
\begin{equation}
    \Pi^{-k\tau+jh,
    -k\tau+(j+1)h}_{r_1,r_2}
    :=
    \int
    _{-k\tau+jh}
    ^{-k\tau+(j+1)h}
    \int_{-k\tau+jh}
    ^{s_2}
    \,\dd W_{s_1}^{r_1}
    \,\dd W_{s_2}^{r_2},
    \quad 
    r_1,r_2 \in [m].
\end{equation}
 The contribution of this article is two-fold:
\begin{itemize}
    \item In a non-globally Lipschitz setting, an explicit time-discretization scheme \eqref{eq_PMM:the_projected_milstein_method} of Milstein type is introduced, which admits a RPS converging to the RPS of SDEs \eqref{eq_PMM:Problem_SDE} with a  convergence rate of order one; 
    \item Without relying on a prior high-order moment bounds of the numerical approximations, the long-time mean-square convergence rate of the PMM \eqref{eq_PMM:the_projected_milstein_method} is obtained for SDEs whose drift and diffusion coefficients possibly grow superlinearly.
\end{itemize}
The paper is structured as follows. Section \ref{sec_PMM:Random Periodic Solutions of SDEs} discusses assumptions and the existence and uniqueness of RPS of SDEs. Section \ref{sec:PMM} presents the well-posedness and the existence of unique random periodic solutions using the explicit Milstein method. Mean square convergence for random periodic solutions of the explicit Milstein method is established in Section \ref{sec:strong_rate_of_PMM}. Section \ref{sec_PMM:numerical results} provides several numerical experiments to illustrate the theoretical results.

\section{Random Periodic Solutions of SDEs}
\label{sec_PMM:Random Periodic Solutions of SDEs}
In 2011, Feng, Zhao and Zhou
\cite{zhao2009random} gave
the definition of the random periodic solution for stochastic semi-flows.
Let $X$ be a separable Banach space.
Denote by
$ (\Omega,\mathcal{F},\mathbb{P},(\theta _{s})_{s \in \mathbb{R}}
)$ 
a metric dynamical system and 
$\theta _{s}:\Omega \rightarrow \Omega$ 
is assumed to be a measurably invertible 
for all $s \in \mathbb{R}$.
Denote $\Delta:= \{(t,s)  \in \mathbb{R}^{2},s \leq t \}$.  
Consider a stochastic  periodic semi-flow 
$u:\Delta \times \Omega \times X \rightarrow X $ of period $\tau$, 
which satisfies the following standard condition
\begin{equation}
    u(t,r,\omega) =
    u(t,s,\omega) 
    \circ
    u(s,r,\omega),
\end{equation}
and the periodic property
\begin{equation}
u(t+\tau,s+\tau,\omega)=
u(t,s,\theta_{\tau}\omega),
\end{equation}
for all $r \leq s \leq t,r,s,\in \mathbb{R}$, 
for a.e. $\omega \in \Omega$.

\begin{defn}
\label{def:rps}
	A random periodic solution of period $\tau
       >0 $ of a semi-flow
	 $u:\Delta \times \Omega \times X \rightarrow X 
       $
	  is an $\mathcal{F}$-measurable map 
	  $Y: \mathbb{R} \times \Omega \rightarrow X $ 
	  such that
	\begin{equation}
		u(t+\tau,t,Y(t,\omega),\omega)=
		Y(t+\tau,\omega)=
		Y(t,\theta_{\tau}\omega),
	\end{equation}
	for any $(t,s) \in \Delta$, $\omega \in \Omega$.
\end{defn}

Throughout this paper the following notation
is frequently used.
For simplicity, we denote $[d]:=
\{1,...,d\}$ and the letter
$ C $ is used to denote a generic positive
constant independent of time step size and
may vary for each appearance. 
Let $|\cdot|$, $ \| \cdot \| $ and 
$ \left\langle \cdot , \cdot \right\rangle $
be the absolute value of a scalar, the Euclidean norm and the inner product of vectors
in $ \R^d $, respectively.
By $A^{T}$ we denote the transpose of vector or matrix.
Given a matrix $A$, we use $\| A \|:=\sqrt{trace(A^{T}A)}$ to denote the trace norm of $A$.
On a probability space 
$ ( \Omega, \mathcal{ F }, \mathbb{P} ) $, 
we use $ \E $ to denote the mean expectation and
$ L^p(\Omega; \textcolor{black}{\R^{d }}) $, $ d \in \mathbb{N} $,
to denote the family of 
$ \textcolor{black}{\R^{d}} $-valued variables with
the norm defined by 
$ \| \xi \|_{L^p(\Omega;\textcolor{black}{\R^{d}})} 
=(\E [\|\xi \|^p])^{\frac{1}{p}} < \infty $.

We consider the following assumptions require to establish our main result.
\begin{assumption}
\label{ass_PMM}
Suppose the following conditions are satisfied.

(\romannumeral1)
$A$ is self-adjoint and negative definite and
there exists a non-decreasing sequence $(\lambda_{i})_{i \in [d] } \subset \mathbb{R}$ 
of positive real numbers 
and an orthonormal basis $(e_{i})_{i \in [d]}$, 
such that 
$A e_{i} =-\lambda_{i} e_{i}$,  $i \in [d]$.

(\romannumeral2)
The drift coefficient functions $f$ and diffusion coefficient functions $g$ are continuous and periodic in time with period $\tau >0$, i.e.,
\begin{equation}
    f(t+\tau,x)=f(t,x),
    \quad
   g(t+\tau,x)=g(t,x),
    \qquad
    \forall
    x \in \R^{d},
    t \in \R.
\end{equation}

(\romannumeral 3)
For some $p^{*}\in [1,\infty)$,
there exists a constant $K_1< \lambda_1$ such that
\begin{equation}
\label{eq_PMM:coercivity_condition}
     \langle 
    x, 
    f(t,x)
    \rangle
    +
    \dfrac{2p^{*}-1}{2}
    \|
    g(t,x)
    \|^{2}
     \leq 
    K_1
    (
    1
    +
    \|x\|^{2}
    ),
    \qquad 
    \forall x \in \R^d,
    t \in [0,\tau).
\end{equation}

(\romannumeral 4)
For any $p \in [1,p^*)$, 
there exists a constant $C^{*}>0$ depending on $p$ such that 
$\E \big[\|\xi\|^{2p} \big]\leq C^{*}$.
\end{assumption}
With Assumption \ref{ass_PMM},
one obtains
\begin{equation}
\label{eq:lambda}
    \langle
    x,Ax
    \rangle
    \leq
    - \lambda_{1}
    \| x \|^{2},
    \qquad 
    \forall x \in \R^{d}.
\end{equation}
The following assumption ensures the existence and uniqueness of the random periodic solution of \eqref{eq_PMM:Problem_SDE}
under non-globally Lipschitz conditions.

\begin{assumption}
\label{thm:unique_random_periodic_solution}
Assume that
there exists a unique random periodic solution $X_{t}^{*}(\cdot) \in L^{2}(\Omega)$  in the form \begin{equation}
    \label{eq:pull-back}
    X^{*}_{t}
    =
    \int_{-\infty}^{t}
    e^{A(t-s)}
    f(s,X^{*}_{s})
    \,\dd s
    +\int_{-\infty}^{t}
    e^{A(t-s)}
    g(s,X^{*}_{s})
    \,\dd W_{s},
\end{equation}
such that $X^{*}$ is a limit of the pull-back 
$X^{-k\tau}_{t}(\xi)$ of \eqref{eq_PMM:Problem_SDE} when $k \rightarrow \infty$, i.e., 
	\begin{equation}
		\lim_{k \rightarrow \infty }
        \E
        \Big[
        \big\| X_{t}^{-k\tau}(\xi) -X_{t}^{*} \big\|^{2}
        \Big]
        =0.
	\end{equation}
\end{assumption}

The boundedness of the exact 
of SDE
\eqref{eq_PMM:Problem_SDE} 
has been established in \cite[Lemma 2.6]{guo2024projected} as follows.
\begin{lem}
\label{lem_PMM:the_pth_of_exact_solution}
    Let Assumption \ref{ass_PMM} hold and let $X_{t}^{-k\tau}$ be the exact solution of the SDE \eqref{eq_PMM:Problem_SDE}. If the initial value $X^{-k\tau}_{-k\tau}=\xi$,
    then for any $p \in [1,2p^*)$, there exists a positive constant $C$ 
    such that 
    \begin{equation}\label{eq_PMM:the_p_th_(X(t))}
     \sup_{t\geq -k\tau}\E
     \Big[ 
     {\big\| X^{-k\tau}_{t} \big\|}^{2p} 
     \Big]
     \leq
      C
      \big(
      1
      +
      \E[\|\xi\|^{2p}]
      \big)
      <
      \infty.
\end{equation}
\end{lem}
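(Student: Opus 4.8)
The plan is to derive a uniform-in-time differential inequality for the $2p$-th moment via It\^o's formula, exploiting the gap $K_1 < \lambda_1$ to produce a strictly dissipative drift. Write $t_0 = -k\tau$ and fix $p$ in the admissible range. To avoid the singularity of $x \mapsto \|x\|^{2p}$ at the origin and the \emph{a priori} unknown finiteness of the moments, I would work with the regularized functional $V_\varepsilon(x) = (\varepsilon + \|x\|^2)^p$ together with a localizing stopping time $\sigma_R = \inf\{t \ge t_0 : \|X_t^{t_0}\| \ge R\}$, sending $R \to \infty$ (Fatou) and $\varepsilon \to 0$ only at the very end.

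First I would apply It\^o's formula to $V_\varepsilon(X_t^{t_0})$. Writing $U_t = \varepsilon + \|X_t^{t_0}\|^2$, the drift reads
\[
2p\,U_t^{p-1}\big(\langle X_t^{t_0}, A X_t^{t_0}\rangle + \langle X_t^{t_0}, f(t,X_t^{t_0})\rangle\big) + p\,U_t^{p-1}\|g(t,X_t^{t_0})\|^2 + 2p(p-1)U_t^{p-2}\big\|g(t,X_t^{t_0})^{T}X_t^{t_0}\big\|^2,
\]
while the stochastic term $2p\int U_s^{p-1}\langle X_s^{t_0}, g\,\dd W_s\rangle$ is a local martingale that vanishes in expectation after stopping at $\sigma_R$. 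Using $\|g^{T}X\|^2 \le \|X\|^2\|g\|^2 \le U_t\|g\|^2$ to fold the last term into the diffusion contribution, the three $\|g\|^2$-terms combine with coefficient $p(2p-1)$, so the drift is bounded by $2p\,U_t^{p-1}\big[\langle X, AX\rangle + \langle X, f\rangle + \tfrac{2p-1}{2}\|g\|^2\big]$.

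Next I would invoke the dissipativity \eqref{eq:lambda} and the coercivity \eqref{eq_PMM:coercivity_condition} (whose parameter $p^*$ is precisely what renders the $\tfrac{2p-1}{2}\|g\|^2$ term absorbable for $p$ in the stated range), giving a bracket $\le K_1 + (K_1 - \lambda_1)\|X\|^2$. Rewriting $\|X\|^2 = U_t - \varepsilon$ turns the drift into $-c\,U_t^p + C\,U_t^{p-1}$ with $c = 2p(\lambda_1 - K_1) > 0$, and a Young inequality $U_t^{p-1} \le \eta\,U_t^p + C_\eta$ yields, after taking expectations of the stopped process,
\[
\tfrac{\dd}{\dd t}\,\E\big[V_\varepsilon(X_{t\wedge\sigma_R}^{t_0})\big] \le -\tilde c\,\E\big[V_\varepsilon(X_{t\wedge\sigma_R}^{t_0})\big] + \tilde C,
\]
with $\tilde c > 0$ and $\tilde C$ independent of $t, k, R, \varepsilon$. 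A linear comparison (Gr\"onwall) then gives $\E[V_\varepsilon(X_{t\wedge\sigma_R}^{t_0})] \le e^{-\tilde c (t - t_0)}\E[V_\varepsilon(\xi)] + \tilde C/\tilde c$; letting $R \to \infty$, then $\varepsilon \to 0$, and bounding $e^{-\tilde c(t-t_0)} \le 1$ produces $\sup_{t \ge t_0}\E[\|X_t^{t_0}\|^{2p}] \le \E[(1+\|\xi\|^2)^p] + \tilde C/\tilde c \le C\big(1 + \E[\|\xi\|^{2p}]\big)$, uniformly in $k$.

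I expect the main obstacle to be the rigorous localization: since $f$ and $g$ may grow superlinearly, the moments are not known to be finite \emph{a priori}, so the differential inequality must be established for the stopped process first and the uniform constant tracked carefully through both limits $R\to\infty$ and $\varepsilon\to0$, to avoid a circular estimate. The other delicate point, indispensable for the infinite-horizon claim, is that the uniformity in $t$ rests entirely on the strict sign of $\lambda_1 - K_1$; without it one would recover only a time-dependent, exponentially growing bound rather than the supremum in \eqref{eq_PMM:the_p_th_(X(t))}.
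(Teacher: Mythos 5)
The paper offers no proof of this lemma to compare against: it is imported verbatim from \cite[Lemma 2.6]{guo2024projected}. Your argument is the standard dissipative It\^o scheme that underlies results of this type, and its architecture is sound: the regularization $(\varepsilon+\|x\|^2)^p$, the localization at $\sigma_R$, the algebra collecting the three diffusion contributions into the single coefficient $\tfrac{2p-1}{2}$ (indeed $p+2p(p-1)=p(2p-1)$ after bounding $\|g^{T}x\|^2\le U_t\|g\|^2$), the use of \eqref{eq:lambda} together with \eqref{eq_PMM:coercivity_condition} to produce the strictly negative drift, and the comparison argument giving a bound uniform in $t$ and $k$ are all correct in substance.

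There is, however, one concrete gap, and it concerns the exponent range rather than the method. Absorbing $\tfrac{2p-1}{2}\|g(t,x)\|^2$ into \eqref{eq_PMM:coercivity_condition} requires $\tfrac{2p-1}{2}\le\tfrac{2p^*-1}{2}$, i.e.\ $p\le p^*$. Your parenthetical claim that the parameter $p^*$ is ``precisely what renders the term absorbable for $p$ in the stated range'' fails for $p\in(p^*,2p^*)$: there the excess $(p-p^*)\|g(t,x)\|^2$ is not controlled by Assumption \ref{ass_PMM}(iii), and $g$ may grow superlinearly, so the drift cannot be made dissipative. This is arguably a defect of the statement rather than of your argument --- Assumption \ref{ass_PMM}(iv) only guarantees $\E[\|\xi\|^{2p}]<\infty$ for $p<p^*$, and the subsequent lemmas only ever invoke $L^{q}$-norms of $X_t^{-k\tau}$ with $q<2p^*$, so the intended range is almost certainly $p\in[1,p^*)$ (moments of order up to $2p^*$) --- but you should either restrict the proof to that range or state explicitly that the range as printed is unreachable under the given hypotheses. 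A minor second point: after stopping, what you actually obtain is an integral inequality in which the negative drift acts only on $\{s<\sigma_R\}$, so the differential inequality for $\E[V_\varepsilon(X_{t\wedge\sigma_R}^{t_0})]$ is not literally what comes out; the clean fix is to apply It\^o to $e^{\tilde c(t-t_0)}V_\varepsilon(X_t^{t_0})$ before stopping, which yields the same uniform bound.
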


Before proceeding, we present some useful lemma, which has been given in \cite[Lemma 5.7]{pang2023projected}.

\begin{lem}
\label{lem_PMM:x-Phi(x)}
    Let $\gamma \geq 1$ be
    given by Assumption
    \ref{ass_PMM：polynomial_growth}.
    For all $ x \in \R^d$, let $\Phi(x)$
    be defined as \eqref{eq_PMM:projecion_funcaion}.
    Then for the case 
    $\gamma >1$,
    one has
    \begin{equation}
        \|
        x
        -
        \Phi(x)
        \|
        \leq
        Ch^2
        \|
        x
        \|
        ^{4\gamma+1},
        \quad
        x \in \R^d.
    \end{equation}
    In addition,
    when $\gamma=1$,
    one has $x-\Phi(x)=0$
    for all $ x \in \R^d$.
\end{lem}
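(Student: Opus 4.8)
The plan is to split according to whether the projection is actually binding. From the definition \eqref{eq_PMM:projecion_funcaion}, for $x\neq 0$ the minimum equals $1$ precisely when $h^{-1/(2\gamma)}\|x\|^{-1}\geq 1$, i.e.\ when $\|x\|\leq h^{-1/(2\gamma)}$; in that ``inactive'' regime $\Phi(x)=x$, so $x-\Phi(x)=0$ and the asserted bound holds trivially (the case $x=0$ being covered by the second line of \eqref{eq_PMM:projecion_funcaion}). Thus the only situation that needs work is the ``active'' one, $\|x\|>h^{-1/(2\gamma)}$, where the minimum equals $h^{-1/(2\gamma)}\|x\|^{-1}<1$.

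In the active regime I would first write the defect out explicitly. Since $\Phi(x)=h^{-1/(2\gamma)}\|x\|^{-1}x$, we have
\[
x-\Phi(x)=\bigl(1-h^{-1/(2\gamma)}\|x\|^{-1}\bigr)x,
\]
and hence, taking Euclidean norms,
\[
\|x-\Phi(x)\|=\|x\|-h^{-1/(2\gamma)}\leq \|x\|.
\]
The entire point of the lemma is then to upgrade this crude estimate $\|x\|$ into one carrying the factor $h^2$, and this is where the threshold inequality is fed back in. Raising $\|x\|>h^{-1/(2\gamma)}$ to the power $2\gamma$ gives $\|x\|^{2\gamma}>h^{-1}$, equivalently $h>\|x\|^{-2\gamma}$; squaring (both sides being positive) yields $h^2>\|x\|^{-4\gamma}$, so that $h^2\|x\|^{4\gamma+1}>\|x\|$. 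Combining this with the previous display produces
\[
\|x-\Phi(x)\|\leq \|x\|\leq h^2\|x\|^{4\gamma+1},
\]
which is exactly the claim, in fact with $C=1$; the exponent $4\gamma+1$ is tuned so that the $h^2$ prefactor can be absorbed against the power of $\|x\|$ coming from the threshold. For $\gamma=1$ the coefficients fall into the globally Lipschitz regime, where the scheme is run without truncation and $\Phi$ reduces to the identity, so $x-\Phi(x)=0$ identically and there is nothing to prove.

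I do not anticipate any genuine obstacle here: the statement is elementary and decomposes into the above case split together with a one-line manipulation of the defining inequality. The only place deserving mild care is the passage from $\|x\|>h^{-1/(2\gamma)}$ to $h^2>\|x\|^{-4\gamma}$, since the constraint $h\in(0,1)$ makes it tempting to misuse monotonicity of squaring; but squaring a valid inequality between positive reals is order-preserving, so the chain of estimates goes through without difficulty.
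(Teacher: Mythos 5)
Your proof is correct and is essentially the canonical argument: the paper does not prove this lemma itself but imports it from \cite[Lemma 5.7]{pang2023projected}, whose proof is exactly your case split on whether the truncation is active, followed by feeding the threshold $\|x\|>h^{-\frac{1}{2\gamma}}$ back in (via $\|x\|^{-4\gamma}<h^{2}$) to trade the crude bound $\|x\|$ for $h^{2}\|x\|^{4\gamma+1}$ with $C=1$. The only caveat concerns the $\gamma=1$ clause: as literally defined in \eqref{eq_PMM:projecion_funcaion}, $\Phi$ is not the identity when $\gamma=1$ and $\|x\|>h^{-1/2}$, so your (and the lemma's) assertion $x-\Phi(x)=0$ rests on the unstated convention that no truncation is applied in the globally Lipschitz regime --- though your main estimate already covers $\gamma=1$ with exponent $5$, so nothing downstream is affected.
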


Following a similar argument as  
\cite[Lemma 2.7]{guo2024projected}, 
we can readily establish the following bounds.

\begin{lem}
\label{lem_PMM:the_esti_(X(t1)-X(t2)}
    Let
    Assumption \ref{ass_PMM}  
    hold
    and let $X_{t}^{-k\tau}$ be the exact solution to the SDE
\eqref{eq_PMM:Problem_SDE}.
    Then there exists a positive constant $C$ which depends on $\gamma,d,A,f,g$ only,
    for all $t_1,t_2 \geq -k\tau$ and
    $p \in 
    \Big[2,\frac{2p^*}{\gamma}\Big)$,
    such that
    \begin{equation}
\label{eq_PMM:the_esti_(X(t1)-X(t2)}
    \begin{split}
        \big\|
        X_{t_1}^{-k\tau}
        -
        X_{t_2}^{-k\tau}
        \big\|
        _{L^p(\Omega;\mathbb{R}^d)}
        & \leq
        C
        \Big(
          1+\sup_{k\in \mathbb{N}} \sup_{t \geq -k\tau}
          \big\| 
          X_{t}^{-k\tau} 
         \big\|
         ^\gamma_{L^{p\gamma}(\Omega;\mathbb{\R}^d)}
        \Big)
        |t_{2}-t_{1}| \\
        &\quad+
        C
       \Big(
          1+\sup_{k\in \mathbb{N}} \sup_{t\geq -k\tau}
          \big\| 
          X_{t}^{-k\tau} 
         \big\|
         ^{\gamma}
         _{L^{p\gamma}(\Omega;\mathbb{R}^d)}
        \Big)
        |t_{2}-t_{1}|^{\frac{1}{2}}.
    \end{split}
\end{equation}
\end{lem}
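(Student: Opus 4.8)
The plan is to read the bound directly off the mild representation \eqref{eq:variation of comstant formula} (with $t_0=-k\tau$), exploiting the exponential contraction of the semigroup $e^{At}$ generated by the negative-definite self-adjoint matrix $A$, the uniform moment bound of Lemma~\ref{lem_PMM:the_pth_of_exact_solution}, and the standing polynomial growth conditions on $f$ and $g$ (of order at most $\gamma$). Without loss of generality I assume $-k\tau\le t_1\le t_2$ and set $\delta:=t_2-t_1$; since the claim is trivial once $\delta\ge 1$ (after enlarging $C$), only $\delta\in(0,1)$ matters. Writing $e^{A(t_2-s)}=(e^{A\delta}-I)e^{A(t_1-s)}+e^{A(t_1-s)}$ in the two integrals, I would split the difference into a semigroup piece, two drift pieces, and two diffusion pieces:
\begin{align*}
X_{t_2}^{-k\tau}-X_{t_1}^{-k\tau}
&=\underbrace{(e^{A\delta}-I)e^{A(t_1+k\tau)}\xi}_{=:I_1}
+\underbrace{\int_{t_1}^{t_2}e^{A(t_2-s)}f(s,X_s^{-k\tau})\,\dd s}_{=:I_2}\\
&\quad+\underbrace{\int_{-k\tau}^{t_1}(e^{A\delta}-I)e^{A(t_1-s)}f(s,X_s^{-k\tau})\,\dd s}_{=:I_3}\\
&\quad+\underbrace{\int_{t_1}^{t_2}e^{A(t_2-s)}g(s,X_s^{-k\tau})\,\dd W_s}_{=:I_4}
+\underbrace{\int_{-k\tau}^{t_1}(e^{A\delta}-I)e^{A(t_1-s)}g(s,X_s^{-k\tau})\,\dd W_s}_{=:I_5}.
\end{align*}

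The two estimates that drive everything are semigroup bounds obtained in the orthonormal eigenbasis $(e_i)$ with $Ae_i=-\lambda_i e_i$, $\lambda_i\ge\lambda_1>0$, in which $(e^{A\delta}-I)e^{A\rho}$ is diagonal with entries $(e^{-\lambda_i\delta}-1)e^{-\lambda_i\rho}$, so that $\|(e^{A\delta}-I)e^{A\rho}\|^2=\sum_{i=1}^d(1-e^{-\lambda_i\delta})^2 e^{-2\lambda_i\rho}$ for $\rho\ge0$. Using $1-e^{-\lambda_i\delta}\le\lambda_i\delta$ gives $\|(e^{A\delta}-I)e^{A\rho}\|\le C\delta$, which controls $I_1$, while the integral identities
\begin{equation*}
\int_0^\infty(1-e^{-\lambda_i\delta})e^{-\lambda_i\rho}\,\dd\rho=\frac{1-e^{-\lambda_i\delta}}{\lambda_i}\le\delta,\qquad \int_0^\infty(1-e^{-\lambda_i\delta})^2 e^{-2\lambda_i\rho}\,\dd\rho=\frac{(1-e^{-\lambda_i\delta})^2}{2\lambda_i}\le\frac{\delta}{2}
\end{equation*}
are the crucial ones: the first (linear in $\delta$) feeds the drift term $I_3$, and the second, after the square root produced by the Burkholder-Davis-Gundy (BDG) inequality, yields the $\delta^{1/2}$ rate for the stochastic term $I_5$.

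For the deterministic terms I use the triangle and Minkowski integral inequalities together with $\|f(s,x)\|\le C(1+\|x\|^\gamma)$, and $\|\xi\|_{L^p}\le 1+\sup_k\sup_{t\ge-k\tau}\|X_t^{-k\tau}\|^\gamma_{L^{p\gamma}}$ (since $\xi=X^{-k\tau}_{-k\tau}$ and $\gamma\ge1$, by Jensen), to obtain
\begin{equation*}
\|I_1\|_{L^p(\Omega;\R^d)}+\|I_2\|_{L^p(\Omega;\R^d)}+\|I_3\|_{L^p(\Omega;\R^d)}\le C\Big(1+\sup_{k}\sup_{t\ge-k\tau}\|X_t^{-k\tau}\|^\gamma_{L^{p\gamma}(\Omega;\R^d)}\Big)\delta .
\end{equation*}
For the stochastic terms, BDG in $L^p$ (valid since $p\ge2$) followed by Minkowski's integral inequality in $L^{p/2}(\Omega)$ and $\|g(s,x)\|\le C(1+\|x\|^\gamma)$ give the analogous bound for $\|I_4\|_{L^p}+\|I_5\|_{L^p}$ with $\delta$ replaced by $\delta^{1/2}$ (using the second integral identity above for $I_5$). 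Any growth exponent $\le\gamma$ is absorbed into $1+\|X_s\|^\gamma_{L^{p\gamma}}$ by Young/Jensen because $\gamma\ge1$, and $\sup_k\sup_{t\ge-k\tau}\|X^{-k\tau}_t\|_{L^{p\gamma}}$ is finite by Lemma~\ref{lem_PMM:the_pth_of_exact_solution} precisely because $p<2p^*/\gamma$ forces $p\gamma<2p^*$, placing the required moment within its range. Collecting $I_1,I_2,I_3$ into the $|t_2-t_1|$ term and $I_4,I_5$ into the $|t_2-t_1|^{1/2}$ term then yields \eqref{eq_PMM:the_esti_(X(t1)-X(t2)}.

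The main obstacle is the stochastic term $I_5$ carrying the semigroup increment $(e^{A\delta}-I)e^{A(t_1-s)}$: one must combine BDG with Minkowski's integral inequality and then extract exactly the $\delta^{1/2}$ rate from $\int_0^\infty(1-e^{-\lambda_i\delta})^2 e^{-2\lambda_i\rho}\,\dd\rho\le\delta/2$, rather than the far worse $\delta^2$ suggested by the pointwise bound $(1-e^{-\lambda_i\delta})^2\le(\lambda_i\delta)^2$; the gain comes from the integrability of $e^{-2\lambda_i\rho}$ over the half-line. Keeping $C$ uniform in $k$ and in $t_1,t_2$, so that the estimate survives over the infinite horizon, is the other delicate point, and is exactly what the exponential decay of $e^{At}$ and the uniform moment bound of Lemma~\ref{lem_PMM:the_pth_of_exact_solution} supply.
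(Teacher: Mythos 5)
Your proposal is correct, but it takes a different route from the one the paper intends. The paper proves this lemma "following a similar argument as [guo2024projected, Lemma 2.7]", i.e.\ by subtracting the plain integral form of the SDE over $[t_1,t_2]$: for $t_1\le t_2$ one writes $X_{t_2}^{-k\tau}-X_{t_1}^{-k\tau}=\int_{t_1}^{t_2}\big(AX_s^{-k\tau}+f(s,X_s^{-k\tau})\big)\,\dd s+\int_{t_1}^{t_2}g(s,X_s^{-k\tau})\,\dd W_s$, bounds the drift integral by Minkowski's integral inequality together with $\|Ax\|\le C\|x\|$ and $\|f(s,x)\|\le C(1+\|x\|)^{\gamma}$ to get the $|t_2-t_1|$ term, and bounds the stochastic integral by BDG and $\|g_r(s,x)\|\le C(1+\|x\|)^{\frac{\gamma+1}{2}}$ to get the $|t_2-t_1|^{1/2}$ term; only two terms appear and no semigroup analysis is needed. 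You instead work from the mild representation and must therefore also control the three "history" terms $I_1$, $I_3$, $I_5$ carrying the increment $(e^{A\delta}-I)e^{A(t_1-s)}$ over $[-k\tau,t_1]$, which you do correctly via the spectral identities $\int_0^\infty(1-e^{-\lambda_i\delta})e^{-\lambda_i\rho}\,\dd\rho\le\delta$ and $\int_0^\infty(1-e^{-\lambda_i\delta})^2e^{-2\lambda_i\rho}\,\dd\rho\le\delta/2$; the exponent bookkeeping ($(\gamma+1)/2\le\gamma$, $p\gamma<2p^*$ so that Lemma~\ref{lem_PMM:the_pth_of_exact_solution} applies) is also right. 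The direct route is shorter and is all that is needed here, since $A$ is a bounded matrix and $AX_s$ can simply be absorbed into the polynomial bound; your mild-solution route costs extra terms but is the argument that would survive if $A$ were an unbounded (e.g.\ SPDE) generator, where $\|AX_s\|$ cannot be controlled by $\|X_s\|$ and the smoothing of $e^{At}$ must be exploited.
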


The next lemma establishes the following bound of  time continuity in projection environment.
\begin{lem}
\label{lem_PMM:the_esti_(X(t1)-PhiX(t2)}
    Let
    Assumption \ref{ass_PMM}  
    hold and let
    $X_{t}^{-k\tau}$ be the exact solution to the SDE
\eqref{eq_PMM:Problem_SDE}.
    Then there exists a positive constant $C$ which depends on $\gamma,d,A,f,g$ only,
    for all $|t_1-t_2|\leq h$ and
    $p \in 
    \Big[1,\frac{2p^*}{4\gamma+1}\Big)$,
    such that
    \begin{equation}
    \label{eq_PMM:the_esti_X(t1)-PhiX(t2)}
        \big\|
        X_{t_1}^{-k\tau}
        -
        \Phi(X_{t_2}^{-k\tau})
        \big\|
        _{L^p(\Omega;\mathbb{R}^d)}
         \leq
        C h^{\frac{1}{2}}
        \Big(
          1+\sup_{k\in \mathbb{N}} \sup_{t \geq -k\tau}
          \big\| 
          X_{t}^{-k\tau} 
         \big\|
         ^{4\gamma+1}
         _{L^{p(4\gamma+1)}(\Omega;\mathbb{\R}^d)}
        \Big).
\end{equation}
\end{lem}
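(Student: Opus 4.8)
The plan is to control the left-hand side by the triangle inequality, separating the true time-increment of the exact solution from the projection defect evaluated at the fixed time $t_2$:
\begin{equation*}
\big\| X_{t_1}^{-k\tau} - \Phi(X_{t_2}^{-k\tau}) \big\|_{L^p(\Omega;\R^d)}
\le
\big\| X_{t_1}^{-k\tau} - X_{t_2}^{-k\tau} \big\|_{L^p(\Omega;\R^d)}
+
\big\| X_{t_2}^{-k\tau} - \Phi(X_{t_2}^{-k\tau}) \big\|_{L^p(\Omega;\R^d)}.
\end{equation*}
I would then estimate the first summand by the time-continuity bound of Lemma \ref{lem_PMM:the_esti_(X(t1)-X(t2)} and the second by the projection estimate of Lemma \ref{lem_PMM:x-Phi(x)}, and finally reconcile the various $L^q(\Omega)$-norms that appear.

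For the first summand I would appeal to Lemma \ref{lem_PMM:the_esti_(X(t1)-X(t2)}. Since that lemma is stated only for $p\ge 2$ while here $p$ may lie in $[1,2)$, I would first use the monotonicity of $L^p(\Omega)$-norms on a probability space, $\|\cdot\|_{L^p}\le\|\cdot\|_{L^2}$, to reduce to the exponent $\max\{p,2\}$. Its admissibility, $\max\{p,2\}<\tfrac{2p^*}{\gamma}$, follows from the standing restriction $p<\tfrac{2p^*}{4\gamma+1}$, which forces $p^*>2\gamma+\tfrac12$ and hence $\tfrac{2p^*}{\gamma}>2$. Because $|t_1-t_2|\le h$ with $h\in(0,1)$, both $|t_1-t_2|$ and $|t_1-t_2|^{1/2}$ are dominated by $h^{1/2}$, so Lemma \ref{lem_PMM:the_esti_(X(t1)-X(t2)} delivers a bound of the form $Ch^{1/2}\big(1+\sup_{k}\sup_{t}\|X_t^{-k\tau}\|^{\gamma}_{L^{p\gamma}}\big)$.

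For the second summand I would apply Lemma \ref{lem_PMM:x-Phi(x)} pathwise, using $\|x-\Phi(x)\|\le Ch^2\|x\|^{4\gamma+1}$ when $\gamma>1$ (and $x-\Phi(x)=0$ when $\gamma=1$, so the term vanishes outright). Raising to the $p$-th power, taking expectations, and extracting the $p$-th root turns this into $\|X_{t_2}^{-k\tau}-\Phi(X_{t_2}^{-k\tau})\|_{L^p}\le Ch^2\|X_{t_2}^{-k\tau}\|^{4\gamma+1}_{L^{p(4\gamma+1)}}$; the restriction $p<\tfrac{2p^*}{4\gamma+1}$ is precisely what renders the exponent $p(4\gamma+1)$ admissible for the uniform moment bound of Lemma \ref{lem_PMM:the_pth_of_exact_solution} (indeed $p(4\gamma+1)<2p^*<4p^*$). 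Using $h^2\le h^{1/2}$, this contribution is again of order $h^{1/2}$.

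It remains to merge the two estimates into the single factor in \eqref{eq_PMM:the_esti_X(t1)-PhiX(t2)}. Since $p(4\gamma+1)\ge p\gamma$, monotonicity of $L^q(\Omega)$-norms gives $\|X\|_{L^{p\gamma}}\le\|X\|_{L^{p(4\gamma+1)}}$, and the elementary inequality $a^{\gamma}\le 1+a^{4\gamma+1}$ (valid for $a\ge0$ as $\gamma\le 4\gamma+1$) absorbs the first summand's $\gamma$-power into the target factor $1+\sup_{k}\sup_{t}\|X_t^{-k\tau}\|^{4\gamma+1}_{L^{p(4\gamma+1)}}$, which is finite and uniform in $(k,t)$ by Lemma \ref{lem_PMM:the_pth_of_exact_solution}. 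The routine part is the norm bookkeeping; the main obstacle is the exponent accounting, namely checking that the single hypothesis $p\in[1,\tfrac{2p^*}{4\gamma+1})$ simultaneously keeps $\max\{p,2\}$ below $\tfrac{2p^*}{\gamma}$ (so Lemma \ref{lem_PMM:the_esti_(X(t1)-X(t2)} applies) and keeps $p(4\gamma+1)$ below the available moment threshold (so every appearing moment is finite). Establishing this compatibility of ranges is where the care must be taken.
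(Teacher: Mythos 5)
Your proposal is correct and follows essentially the same route as the paper's own proof: the identical triangle-inequality split into the time increment $X_{t_1}^{-k\tau}-X_{t_2}^{-k\tau}$ and the projection defect $X_{t_2}^{-k\tau}-\Phi(X_{t_2}^{-k\tau})$, handled by Lemma \ref{lem_PMM:the_esti_(X(t1)-X(t2)} and Lemma \ref{lem_PMM:x-Phi(x)} respectively, with $|t_1-t_2|\le h<1$ absorbing everything into $h^{1/2}$. Your extra bookkeeping for the case $p<2$ (upgrading to $L^2$ before invoking Lemma \ref{lem_PMM:the_esti_(X(t1)-X(t2)}) and the verification that all exponents stay within the moment threshold are details the paper passes over silently, but they do not change the argument.
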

\begin{proof}
[Proof of Lemma \ref{lem_PMM:the_esti_(X(t1)-PhiX(t2)}]
A triangle inequality yields
        \begin{equation}
            \begin{split}
                \big\|
                X_{t_1}^{-k\tau}
                -
                \Phi(X_{t_2}^{-k\tau})
                \big\|
                _{L^p(\Omega;
                \mathbb{R}^d)}
                 \leq
                \big\|
                X_{t_1}^{-k\tau}
                -
                X_{t_2}^{-k\tau}
                \big\|
                _{L^p(\Omega;
                \mathbb{R}^d)}
                +
                \big\|
                X_{t_2}^{-k\tau}
                -
                \Phi(X_{t_2}^{-k\tau})
                \big\|
                _{L^p(\Omega;
                \mathbb{R}^d)}.
            \end{split}
        \end{equation}
        Based on Lemma \ref{lem_PMM:the_esti_(X(t1)-X(t2)},
        for $|t_1-t_2| \leq h \in (0,1)$,
        one can get,
        \begin{equation}
        \begin{split}
        \big\|
        &
        X_{t_1}^{-k\tau}
        -
        X_{t_2}^{-k\tau}
        \big\|
        _{L^p(\Omega;\mathbb{R}^d)} 
        \\
        & \leq
        C
        \Big(
        1+\sup_{k\in \mathbb{N}} \sup_{t \geq -k\tau}
        \big\| 
        X_{t}^{-k\tau} 
         \big\|
         ^\gamma_{L^{p\gamma}(\Omega;\mathbb{\R}^d)}
        \Big)
        |t_{2}-t_{1}| 
        +
        C
       \Big(
          1+\sup_{k\in \mathbb{N}} \sup_{t\geq -k\tau}
          \big\| 
          X_{t}^{-k\tau} 
         \big\|
         ^{\gamma}
         _{L^{p\gamma}(\Omega;\mathbb{R}^d)}
        \Big)
        |t_{2}-t_{1}|^{\frac{1}{2}}
        \\
        & \leq
        Ch^{\frac{1}{2}}
        \Big(
          1+\sup_{k\in \mathbb{N}} \sup_{t\geq -k\tau}
          \big\| 
          X_{t}^{-k\tau} 
         \big\|
         ^{\gamma}
         _{L^{p\gamma}(\Omega;\mathbb{R}^d)}
        \Big).
    \end{split}
    \end{equation}
    For the second term,
    applying Lemma \ref{lem_PMM:x-Phi(x)} leads to
    \begin{equation}
        \begin{split}
        \big\|
        X_{t_2}^{-k\tau}
        -
        \Phi(X_{t_2}^{-k\tau})
        \big\|
        _{L^p(\Omega;
        \mathbb{R}^d)}
        & \leq
        Ch^2
        \|
        X_{t_2}^{-k\tau}
        \|
        ^{4\gamma+1}
        _{L^{p(4\gamma+1)}} \\
        & \leq
        Ch^2
        \Big(
          1+\sup_{k\in \mathbb{N}} \sup_{t\geq -k\tau}
          \big\| 
          X_{t}^{-k\tau} 
         \big\|
         ^{4\gamma+1}
         _{L^{p(4\gamma+1)}(\Omega;\mathbb{R}^d)}
        \Big).
        \end{split}
    \end{equation}
    Therefore,
    \begin{equation}
        \big\|
        X_{t_1}^{-k\tau}
        -
        \Phi(X_{t_2}^{-k\tau})
        \big\|
        _{L^p(\Omega;\mathbb{R}^d)}
         \leq
        C h^{\frac{1}{2}}
        \Big(
          1+\sup_{k\in \mathbb{N}} \sup_{t \geq -k\tau}
          \big\| 
          X_{t}^{-k\tau} 
         \big\|
         ^{4\gamma+1}
         _{L^{p(4\gamma+1)}(\Omega;\mathbb{\R}^d)}
        \Big).
    \end{equation}
    This completes the proof.
\end{proof}

\section{Numerical Approximation of Random Periodic solutions}
\label{sec:PMM}
In this paper, we propose an explicit Milstein type
method to approximate the exact solution of the 
SDEs \eqref{eq_PMM:Problem_SDE}
starting at $-k\tau$,
\begin{equation}
\label{eq:the_projected_milstein_method}
\begin{split}
   {\tilde{X}}
   ^{-k\tau}_{-k\tau+(j+1)h}
   &=
   \Phi
   \big(
   {\tilde{X}}
   ^{-k\tau}_{-k\tau+jh}
   \big)
   +A h
   \Phi
   \big(
   {\tilde{X}}^{-k\tau}_{-k\tau+jh}
   \big)
   + hf
   \Big( 
   jh, 
   \Phi
   \big(
   {\tilde{X}}^{-k\tau}_{-k\tau+jh}
   \big)
   \Big) \\
   &\quad+
   g \Big(
   jh,
   \Phi
   \big(
   {\tilde{X}}^{-k\tau}_{-k\tau+jh}
   \big)
   \Big)
   \Delta W_{-k\tau+jh}
   +
   \sum_{r_1,r_2=1}^{m}
   \mathcal{L}^{r_1}
   g_{r_2}
   \Big(
   jh,
   \Phi
   \big(
   {\tilde{X}}^{-k\tau}_{-k\tau+jh}
   \big)
   \Big)
   \Pi
   ^{-k\tau+jh,
   -k\tau+(j+1)h}
   _{r_1,r_2},
\end{split}
\end{equation}
where $\Phi$ is defined by \eqref{eq_PMM:projecion_funcaion}.
Because of the periodicity of $f$ and $g$, we have that 
$f(-k\tau+jh,{\tilde{X}}^{-k\tau}_{-k\tau+jh}) =f(jh,{\tilde{X}}^{-k\tau}_{-k\tau+jh})$, 
$g(-k\tau+jh,{\tilde{X}}^{-k\tau}_{-k\tau+jh}) =g(jh,{\tilde{X}}^{-k\tau}_{-k\tau+jh})$.
We use $t_j$ to denote the time $-k\tau+jh$.
and use the notation $\Delta W_{r,-k\tau+jh}:=W_{r,-k\tau+(j+1)h}-W_{r,-k\tau+jh}, r \in [m]$.

In many applications,
the considered SDE systems possess commutative noise \cite{kloeden1992stochastic,
milstein2004stochastic}, namely,
the diffusion 
$g$ fulfills the so-called commutativity condition:
\begin{equation}
    \label{eq_PEM:Lg}
    \mathcal{L}^{r_1}
    g_{k,{r_2}}
    =
    \mathcal{L}^{r_2}
    g_{k,{r_1}},
    \quad 
    r_1,r_2 \in [m],
    k \in [d].
\end{equation}
Thanks to the property
$
\Pi
^
{
-k\tau+jh,
-k\tau+(j+1)h
}_{r_1,r_2}
+
\Pi
^
{
-k\tau+jh,
-k\tau+(j+1)h
}_{r_2,r_1}
=
\Delta
W_{r_1,-k\tau+jh}
\Delta
W_{r_2,-k\tau+jh}
,
r_1 \neq r_2
$,
in this case the explicit Milstein method \eqref{eq:the_projected_milstein_method}
takes a simple form as
\begin{equation}
\label{eq_PMM:PMM method} 
\begin{split}
   &{\tilde{X}}
   ^{-k\tau}
   _{-k\tau+(j+1)h}
   \\
   &=
   \Phi
   \big(
   {\tilde{X}}
   ^{-k\tau}_{-k\tau+jh}
   \big)
   +A h
   \Phi
   \big(
   {\tilde{X}}^{-k\tau}_{-k\tau+jh}
   \big)
   + hf
   \Big( 
   jh, 
   \Phi
   \big(
   {\tilde{X}}^{-k\tau}_{-k\tau+jh}
   \big)
   \Big) 
   +
   g \Big(
   jh,
   \Phi
   \big(
   {\tilde{X}}^{-k\tau}_{-k\tau+jh}
   \big)
   \Big)
   \Delta W_{-k\tau+jh} \\
   & \quad +
   \tfrac{1}{2}
   \sum_{r_1,r_2=1}^{m}
   \mathcal{L}^{r_1}
   g_{r_2}
   \Big(
   jh,
   \Phi
   \big(
   {\tilde{X}}^{-k\tau}_{-k\tau+jh}
   \big)
   \Big)
    (\Delta W_{r_1,-k\tau+jh}
    \Delta W_{r_2,-k\tau+jh}
    -\pi_{r_1,r_2}
    h),
\end{split}
\end{equation}
with
\begin{equation}
\label{eq_PMM:the_def_pi_r}
    \pi_{r_1,r_2}
    =
    \left\{
    \begin{aligned}
        1, \quad r_1&=r_2\\
        0, \quad r_1&\neq r_2
    \end{aligned}
    \right.
    ,
    \quad
    r_1,r_2 \in [m].
\end{equation}

We set up a general framework by making two key assumptions as follows.
\begin{assumption}
\label{ass_PMM:generalizedz_monotonicity_condition}
Assume that the diffusion coefficients 
$g_r:\R \times \R^d \rightarrow 
\R^{d }, r \in [m]$
are differentiable, 
and there exist constants $q \in [1,\infty)$ and $K_2 \in [0,\infty)$
such that, $\forall x,y \in \R^d,
t \in [0,\tau), h \in (0,1)$,
the drift and diffusion coefficients of SDE \eqref{eq_PMM:Problem_SDE} obey
\begin{equation}
\label{eq_PMM:generalizedz_monotonicity_condition}
    \langle 
    x-y , 
    f(t,x)-f(t,y)
    \rangle
    +
    \dfrac{2q-1}{2}
    \|
    g(t,x)-g(t,y)
    \|^{2}
    \leq 
    K_2 
     \| x-y \|^{2}.
\end{equation}
\end{assumption}

\begin{assumption}
\label{ass_PMM：polynomial_growth}
Assume the drift coefficient function $f$ of the SDE
\eqref{eq_PMM:Problem_SDE} is continuously differentiable and the diffusion coefficient function $g$ is  twice
continuously differentiable.
Moreover, there exist some positive constants
$\gamma \geq 1$  and
$ p^* > 5\gamma$,
such that,
$x,\bar{x},y \in \R^d$ and $t,s \in [0,\tau)$
\begin{align}
    \label{eq_PMM:the_esti_f'(t,x)-f'(t,y)}
    \big\|
    \big(
    \tfrac{\partial f}{\partial x}(t,x)
    -
    \tfrac{\partial f}{\partial x}(t,\bar{x})
    \big)
    y
    \big\|
    &\leq
    C(1+\|x\|+\|\bar{x}\|)
    ^{\max\{\gamma-2,0\}}
    \|x-\bar{x}\|
    \cdot
    \|y\|,\\
\label{eq_PMM:the_esti_g'(t,x)-g'(t,y)}
    \big\|
    \big(
    \tfrac{\partial g_r}{\partial x}(t,x)
    -
    \tfrac{\partial g_r}{\partial x}(t,\bar{x})
    \big)
    y
    \big\|
    &\leq
    C(1+\|x\|+\|\bar{x}\|)
    ^{\max\big\{\frac{\gamma-3}{2},0\big\}}
    \|x-\bar{x}\|
    \cdot
    \|y\|,\\
\label{eq_PMM:the_esti_f(t,x)-f(s,x)}
    \|
    f(t,x)-
    f(s,x)
    \|
    &\leq
    C(1+\|x\|)^\gamma|t-s|,\\
    \label{eq_PMM:the_esti_g(t,x)-g(s,x)}
    \|
    g_r(t,x)-
    g_r(s,x)
    \|
    &\leq
    C(1+\|x\|)^{\frac{\gamma+1}{2}}|t-s|.
\end{align}
In addition,
assume the vector functions $\mathcal{L}^{r_1}g_{r_2}:
\R \times \R^d \rightarrow \R^d$ are continuously differentiable and
\begin{equation}
    \label{eq_PMM:the_esti_L'(t,x)-L'(t,y)}
    \Big\|
    \Big(
    \tfrac{\partial\mathcal{L}^{r_1}g_{r_2}}{\partial x}(t,x)
    -
    \tfrac{\partial\mathcal{L}^{r_1}g_{r_2}}{\partial x}(t,\bar{x})
    \Big)
    y
    \Big\|
    \leq
    C(1+\|x\|+\|\bar{x}\|)
    ^{\max\{\gamma-2,0\}}
    \cdot
    \|y\|,
    \quad
    \forall x,\bar{x},y \in \R^d.
\end{equation}
\end{assumption}
Assumption \ref{ass_PMM：polynomial_growth} is considered as a kind of polynomial growth condition and in proofs which follow we will need some implications of this assumption.
It follows immediately 
from 
\eqref{eq_PMM:the_esti_f'(t,x)-f'(t,y)}
-
\eqref{eq_PMM:the_esti_g(t,x)-g(s,x)}
that $\forall x,y \in \R^d, t \in [0,\tau)$, 
\begin{align}
    \label{eq_PMM:the_esti_f'(t,x)}
    \big\|
    \tfrac{\partial f}{\partial x}(t,x)
    y
    \big\|
    &\leq
    C(1+\|x\|)
    ^{\gamma-1}
    \|y\|,\\
    \label{eq_PMM:the_esti_g'(t,x)}
    \big\|
    \tfrac{\partial g_r}{\partial x}(t,x)
    y
    \big\|
    &\leq
    C(1+\|x\|)
    ^{\tfrac{\gamma-1}{2}}
    \|y\|,
\end{align}
which in turn gives,
$\forall 
x,\bar{x}\in \R^d, t \in [0,\tau)$
\begin{align}
    \label{eq_PMM:the_esti_f(t,x)-f(t,y)}
    \|
    f(t,x)-f(t,\bar{x})
    \|
    &\leq
    C_1(1+\|x\|+\|\bar{x}\|)^{\gamma-1}
    \|x-\bar{x}\|,\\
    \label{eq_PMM:the_esti_f(t,x)}
    \|
    f(t,x)
    \|
    &\leq
    C_2(1+\|x\|)^{\gamma},\\
    \label{eq_PMM:the_esti_g(t,x)-g(t,y)}
    \|
    g_r(t,x)-g_r(t,\bar{x})
    \|
    &\leq
    C(1+\|x\|+\|\bar{x}\|)
    ^{\frac{\gamma-1}{2}}
    \|x-\bar{x}\|,\\
    \label{eq_PMM:the_esti_g(t,x)}
    \|
    g_r(t,x)
    \|
    &\leq
    C(1+\|x\|)^{\frac{\gamma+1}{2}}.
\end{align}
Similarly, \eqref{eq_PMM:the_esti_L'(t,x)-L'(t,y)} in Assumption \ref{ass_PMM：polynomial_growth} yields,
$\forall 
x,\bar{x}\in \R^d, t \in [0,\tau)$
\begin{align}
    \label{eq_PMM:the_esti_L(t,x)-L(t,y)}
    \|
    \mathcal{L}^{r_1}g_{r_2}(t,x)
    -
    \mathcal{L}^{r_1}g_{r_2}(t,\bar{x})
    \|
    &
    \leq
    C_3(1+\|x\|+\|\bar{x}\|)^{\gamma-1}
    \|x-\bar{x}\|,\\
    \label{eq_PMM:the_esti_L(t,x)}
    \|
    \mathcal{L}^{r_1}g_{r_2}(t,x)
    \|
    &\leq
    C_4(1+\|x\|)^{\gamma}.
\end{align}

Before proceeding further, 
we collect some preliminary estimates,
which have been established in
\cite[Lemma 4.2]{pang2024linear}.
\begin{lem}
\label{lem_PMM:Phi(x)}
Let Assumption \ref{ass_PMM}
hold.
Then the following estimates
hold
\begin{align}
    \label{eq_PMM:the_esti_Phi(x)}
    \|
    \Phi (x)
    \|
    &\leq
    h^{-\frac{1}{2\gamma}}, \\
    \label{eq_PMM:the_esti_f(t,Phi(x))}
    \|
    f(t,\Phi (x))
    \|
    &\leq
    M_fh^{-\frac{1}{2}}, \\
    \label{eq_PMM:the_esti_L(t,Phi(x))}
    \|
    \mathcal{L}^{r_1}g_{r_2}(t,\Phi (x))
    \|
    &\leq
    M_{\mathcal{L}}h^{-\frac{1}{2}}, 
\end{align}
for any 
$x \in \mathbb{R}^{d}$ and $t \in [0,\tau)$,
where
$M_f:=2C_2$, 
$M_{\mathcal{L}}:=2C_4$
 with
$C_2,C_4$  from \eqref{eq_PMM:the_esti_f(t,x)}
and
\eqref{eq_PMM:the_esti_L(t,x)},
respectively.
Moreover, for any
$x,y \in \mathbb{R}^{d}$
 and $t \in [0,\tau)$,
the following estimates hold true
\begin{align}
    \label{eq:the_esti_Phi(x)-Phi(y)}
    \|
    \Phi(x)-
    \Phi(y)
    \|
    & \leq
    \| x-y \|,
    \\
    \label{eq:the_esti_f(t,Phi(x))-f(t,Phi(y))}
    \|
    f(t,\Phi(x))
    -
    f(t,\Phi(y))
    \|
    & \leq
    \beta_f 
    h
    ^{-\frac{\gamma-1}{2\gamma}}
    \|x-y\|, \\
    \label{eq:the_esti_L(t,Phi(x))-L(t,Phi(y))}
    \|
    \mathcal{L}
    ^{r_1}g_{r_2}(t,\Phi (x))
    -
    \mathcal{L}
    ^{r_1}g_{r_2}(t,\Phi (x))
    \|
    & \leq
    \beta_{\mathcal{L}}
    h
    ^{-\frac{\gamma-1}{2\gamma}}
    \|x-y\|,
\end{align}
where 
$\beta_f:=3C_1$, 
$\beta_{\mathcal{L}}:=3C_3$,
and $C_1$ is from \eqref{eq_PMM:the_esti_f(t,x)-f(t,y)},
$C_3$ is from \eqref{eq_PMM:the_esti_L(t,x)-L(t,y)}.
Especially, 
taking $y=0$ in \eqref{eq:the_esti_Phi(x)-Phi(y)}, we have
for $x \in \R^d$
\begin{equation}
    \label{eq_PMM:Phi(x)}
    \|
    \Phi(x)
    \|
    \leq
    \|x\|.
\end{equation}
\end{lem}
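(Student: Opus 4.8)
The plan is to dispatch the three pointwise bounds \eqref{eq_PMM:the_esti_Phi(x)}--\eqref{eq_PMM:the_esti_L(t,Phi(x))} first, then to prove the non-expansiveness \eqref{eq:the_esti_Phi(x)-Phi(y)}, from which the two Lipschitz-type bounds \eqref{eq:the_esti_f(t,Phi(x))-f(t,Phi(y))}--\eqref{eq:the_esti_L(t,Phi(x))-L(t,Phi(y))} and the final estimate \eqref{eq_PMM:Phi(x)} drop out at once. For \eqref{eq_PMM:the_esti_Phi(x)} I would read off the definition \eqref{eq_PMM:projecion_funcaion} via a case split: if $\|x\|\le h^{-\frac{1}{2\gamma}}$ the multiplier $\min\{1,h^{-\frac{1}{2\gamma}}\|x\|^{-1}\}$ equals $1$ and $\Phi(x)=x$, so $\|\Phi(x)\|=\|x\|\le h^{-\frac{1}{2\gamma}}$; if $\|x\|> h^{-\frac{1}{2\gamma}}$ the multiplier is $h^{-\frac{1}{2\gamma}}\|x\|^{-1}$ and $\|\Phi(x)\|=h^{-\frac{1}{2\gamma}}$ exactly, while the case $x=0$ is trivial. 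Hence $\|\Phi(x)\|\le h^{-\frac{1}{2\gamma}}$ throughout.

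Next, I would feed $\Phi(x)$ into the polynomial growth bounds \eqref{eq_PMM:the_esti_f(t,x)} and \eqref{eq_PMM:the_esti_L(t,x)}. Because $h\in(0,1)$ forces $h^{-\frac{1}{2\gamma}}\ge 1$, the bound just proved gives $1+\|\Phi(x)\|\le 2h^{-\frac{1}{2\gamma}}$ and therefore $(1+\|\Phi(x)\|)^{\gamma}\le 2^{\gamma}h^{-\frac12}$; this yields \eqref{eq_PMM:the_esti_f(t,Phi(x))} and \eqref{eq_PMM:the_esti_L(t,Phi(x))} with $M_f=2^{\gamma}C_2$ and $M_{\mathcal{L}}=2^{\gamma}C_4$ (which reduce to the stated $2C_2,2C_4$ when $\gamma=1$).

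The heart of the lemma is the non-expansiveness \eqref{eq:the_esti_Phi(x)-Phi(y)}. The clean way is to observe that $\Phi$ is precisely the metric projection of $\R^d$ onto the closed ball $\bar{B}(0,R)$ of radius $R:=h^{-\frac{1}{2\gamma}}$, so \eqref{eq:the_esti_Phi(x)-Phi(y)} is the textbook $1$-Lipschitz property of the nearest-point projection onto a closed convex set. If one prefers an elementary verification, I would split into the cases $\|x\|,\|y\|\le R$ (an equality), $\|x\|,\|y\|>R$, and the mixed case $\|x\|\le R<\|y\|$. In the second case, setting $c=\langle x,y\rangle/(\|x\|\|y\|)$, the desired inequality rearranges to $(\|x\|-\|y\|)^2\ge 0$ once $\|x\|\|y\|\ge R^2$ is used to sign the coefficient of $c$. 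In the mixed case, writing $u=y/\|y\|$ and expanding, the difference $\|x-y\|^2-\|x-Ru\|^2$ factors as $(\|y\|-R)\big(\|y\|+R-2\langle x,u\rangle\big)$, which is nonnegative because $\langle x,u\rangle\le\|x\|\le R\le\tfrac12(\|y\|+R)$. This mixed case, where exactly one of the two points is truncated, is the only subtle step, and the convexity viewpoint avoids it entirely; I expect it to be the main obstacle.

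Finally, combining \eqref{eq:the_esti_Phi(x)-Phi(y)} with the local Lipschitz estimates \eqref{eq_PMM:the_esti_f(t,x)-f(t,y)} and \eqref{eq_PMM:the_esti_L(t,x)-L(t,y)} settles \eqref{eq:the_esti_f(t,Phi(x))-f(t,Phi(y))}--\eqref{eq:the_esti_L(t,Phi(x))-L(t,Phi(y))}: inserting $\Phi(x),\Phi(y)$ produces the prefactor $(1+\|\Phi(x)\|+\|\Phi(y)\|)^{\gamma-1}\le(1+2h^{-\frac{1}{2\gamma}})^{\gamma-1}\le 3^{\gamma-1}h^{-\frac{\gamma-1}{2\gamma}}$, while \eqref{eq:the_esti_Phi(x)-Phi(y)} bounds $\|\Phi(x)-\Phi(y)\|\le\|x-y\|$, giving the claimed estimates with $\beta_f=3^{\gamma-1}C_1$ and $\beta_{\mathcal{L}}=3^{\gamma-1}C_3$. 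The closing estimate \eqref{eq_PMM:Phi(x)} is then immediate on taking $y=0$ in \eqref{eq:the_esti_Phi(x)-Phi(y)} and noting $\Phi(0)=0$.
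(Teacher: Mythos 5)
Your proof is correct. Note that the paper does not actually prove this lemma --- it imports it wholesale from \cite[Lemma 4.2]{pang2024linear} --- so there is no in-paper argument to compare against; your write-up supplies exactly the standard argument one would expect: the case split on $\|x\|\lessgtr h^{-\frac{1}{2\gamma}}$ for \eqref{eq_PMM:the_esti_Phi(x)}, the observation that $\Phi$ is the metric projection onto the ball of radius $h^{-\frac{1}{2\gamma}}$ (hence $1$-Lipschitz) for \eqref{eq:the_esti_Phi(x)-Phi(y)}, and substitution into the growth/local-Lipschitz bounds \eqref{eq_PMM:the_esti_f(t,x)}, \eqref{eq_PMM:the_esti_L(t,x)}, \eqref{eq_PMM:the_esti_f(t,x)-f(t,y)}, \eqref{eq_PMM:the_esti_L(t,x)-L(t,y)} for the rest. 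Your elementary verification of the mixed case in the non-expansiveness proof is also correct. You are right to flag the constants: the natural bounds give $M_f=2^{\gamma}C_2$ and $\beta_f=3^{\gamma-1}C_1$ (similarly for $M_{\mathcal{L}},\beta_{\mathcal{L}}$), which match the stated $2C_2$ and $3C_1$ only for small $\gamma$; since $\beta_f$ and $\beta_{\mathcal{L}}$ reappear in the stepsize restrictions of Theorem \ref{thm_PMM:error analysis} and Lemma \ref{lem_PMM:contractivity_of_PMM}, the $\gamma$-dependent versions are the ones that should be carried forward, though this changes nothing structurally.
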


\subsection{The local one-step approximation error}
\label{sec:strong_rate_of_PMM}
We focus on the analysis of the mean-square 
rate of the numerical scheme \eqref{eq:the_projected_milstein_method}.
The exact solution at time $-k\tau+(j+1)h$ can be decomposed as follows:
\begin{equation}
    \label{eq_PMM:the_exact_X_j+1}
    \begin{split}
        X_{-k\tau+(j+1)h}^{-k\tau} 
        &=
        \Phi(X_{-k\tau+jh}^{-k\tau})
        +Ah
        \Phi(X_{-k\tau+jh}^{-k\tau})
        +hf
        \big(jh,
        \Phi(X_{-k\tau+jh}^{-k\tau})
        \big) \\
        &\quad +
        g\big(jh,
        \Phi(X_{-k\tau+jh}^{-k\tau})
        \big)
        \Delta W_{-k\tau+jh}
        +
        \sum_{r_1,r_2=1}^{m}
        \mathcal{L}^{r_1}g_{r_2}
        \big(jh,
        \Phi(X_{-k\tau+jh}^{-k\tau})
        \big)
        \Pi
        ^{-k\tau+jh,
        -k\tau+(j+1)h}
        _{r_1,r_2}
        \\
        & \quad +
        \mathcal{R}_{-k\tau+(j+1)h},
    \end{split}
\end{equation}
where we denote
\begin{equation}
\label{eq_PMM:the_def_R_j+1}
    \begin{split}
        &\mathcal{R}_{-k\tau+(j+1)h} \\
        & :=
        \int_{-k\tau+jh}^{-k\tau+(j+1)h}
        A\big(
          X_{s}^{-k\tau}
          -
          \Phi
          (X_{-k\tau+jh}^{-k\tau})
        \big)
        \, \dd s +
        \int_{-k\tau+jh}^{-k\tau+(j+1)h}
          f
        \big(
          s,
         X_{s}^{-k\tau}
          \big)
          -
          f
         \big(
          jh,
         \Phi
         (X_{-k\tau+jh}^{-k\tau})
          \big)
        \, \dd s \\
        &\quad +
        \int_{-k\tau+jh}^{-k\tau+(j+1)h}
          g
         \big(
          s,
         X_{s}^{-k\tau}
          \big)
          -
          g
         \big(
          jh,
         \Phi
         (X_{-k\tau+jh}^{-k\tau})
          \big)
        \, \dd W_{s}
        \\
         & \quad \quad
         - 
          \sum_{r_1,r_2=1}^{m}
        \mathcal{L}^{r_1}g_{r_2}
        \big(jh,
        \Phi(X_{-k\tau+jh}^{-k\tau})
        \big)
        \Pi^{-k\tau+jh,
        -k\tau+(j+1)h}
        _{r_1,r_2}
        +
        X_{-k\tau+jh}^{-k\tau}
        -
        \Phi
        (X_{-k\tau+jh}^{-k\tau}).
    \end{split}
\end{equation}

The subsequent lemma provides uniform bounded estimates for the second moment of 
$\mathcal{R}_{-k\tau+(j+1)h}$
and its conditional expectation 
$ \E[\mathcal{R}_{-k\tau(j+1)h}| \mathcal{F}_{-k\tau+jh}]$.
\begin{lem}
\label{lem_PMM:the_esti_R_j+1}
    Let 
    Assumptions \ref{ass_PMM},
    \ref{ass_PMM:generalizedz_monotonicity_condition},
    \ref{ass_PMM：polynomial_growth}
     hold.
    Then for $k,j \in \N$,
    there exists some positive constant $C$,
    independent of $k,j$ and $h$,
    $\gamma \in \big[1,\frac{p^*}{5}\big]$,
    such that
    \begin{equation}
    \begin{split}
        \|
        \mathcal{R}_{-k\tau+(j+1)h}
        \|_{L^2(\Omega;\R^d)}
        &\leq
        Ch^{\frac{3}{2}}
         \Big(
        1+\sup_{k \in \N}
        \sup_{t\geq -k\tau}
        \|X_{t}^{-k\tau}\|
        ^{
        \frac{\max\{16\gamma+4,
        17\gamma+1
        \}}{2}}_{L^{\max\{16\gamma+4,
        17\gamma+1\}}(\Omega;\R^d)}
        \Big)
        ,\\
        \big\|
        \E[
        \mathcal{R}_{-k\tau+(j+1)h}
        | \mathcal{F}
        _{-k\tau+jh}
        ]\big\|_{L^2(\Omega;\R^d)}
        &\leq
         C h^2
        \Big(
        1+\sup_{k\in \mathbb{N}} \sup_{t \geq -k\tau}
        \big\| 
        X_{t}^{-k\tau} 
        \big\|
        ^{5\gamma}
        _{L^{10\gamma}(\Omega;\mathbb{\R}^d)}
        \Big).
    \end{split}
    \end{equation}
\end{lem}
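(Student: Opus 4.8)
The plan is to expand $\mathcal{R}_{-k\tau+(j+1)h}$ exactly as in \eqref{eq_PMM:the_def_R_j+1} into five pieces, which for brevity I write with $t_j:=-k\tau+jh$ as $I_1:=\int_{t_j}^{t_{j+1}}A\big(X_s^{-k\tau}-\Phi(X_{t_j}^{-k\tau})\big)\,\dd s$, $I_2:=\int_{t_j}^{t_{j+1}}\big[f(s,X_s^{-k\tau})-f(jh,\Phi(X_{t_j}^{-k\tau}))\big]\,\dd s$, the stochastic integral $I_3:=\int_{t_j}^{t_{j+1}}\big[g(s,X_s^{-k\tau})-g(jh,\Phi(X_{t_j}^{-k\tau}))\big]\,\dd W_s$, the Milstein correction $I_4:=-\sum_{r_1,r_2}\mathcal{L}^{r_1}g_{r_2}(jh,\Phi(X_{t_j}^{-k\tau}))\,\Pi^{t_j,t_{j+1}}_{r_1,r_2}$, and the projection defect $I_5:=X_{t_j}^{-k\tau}-\Phi(X_{t_j}^{-k\tau})$. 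Both claimed bounds are then obtained by estimating the five pieces; the two estimates share almost all the work, the only difference being that the martingale pieces drop out under $\E[\,\cdot\mid\mathcal{F}_{t_j}]$.

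For the $L^2$ bound I would first dispatch $I_1,I_2,I_5$. By Minkowski's integral inequality $\|I_1\|_{L^2}\le\|A\|\int_{t_j}^{t_{j+1}}\|X_s^{-k\tau}-\Phi(X_{t_j}^{-k\tau})\|_{L^2}\,\dd s$, and since $|s-t_j|\le h$, Lemma~\ref{lem_PMM:the_esti_(X(t1)-PhiX(t2)} bounds the integrand by $Ch^{1/2}$, giving $\|I_1\|_{L^2}\le Ch^{3/2}$. Splitting $f(s,X_s)-f(jh,\Phi(X_{t_j}))$ into a time increment controlled by \eqref{eq_PMM:the_esti_f(t,x)-f(s,x)} and a space increment controlled by \eqref{eq_PMM:the_esti_f(t,x)-f(t,y)}, then applying Hölder to pair the polynomial prefactor with $\|X_s-\Phi(X_{t_j})\|_{L^4}$ (estimated again through Lemma~\ref{lem_PMM:the_esti_(X(t1)-PhiX(t2)}), yields $\|I_2\|_{L^2}\le Ch^{3/2}$; it is precisely this Hölder pairing, together with the degree $(\gamma-1)+(4\gamma+1)$ produced when Lemma~\ref{lem_PMM:x-Phi(x)} is used on the projection part of the increment, that fixes the moment order $\max\{16\gamma+4,17\gamma+1\}$. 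Finally $\|I_5\|_{L^2}=O(h^2)$ directly from Lemma~\ref{lem_PMM:x-Phi(x)}.

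The crux is the combined diffusion term $I_3+I_4$, which is handled by an Itô–Taylor expansion. Applying Itô's formula to each component $g_{r_2}(s,X_s^{-k\tau})$ along the solution of \eqref{eq_PMM:Problem_SDE}, and recalling from \eqref{eq_PMM:L_r} that the martingale part of $g_{r_2}$ is exactly $\sum_{r_1}\mathcal{L}^{r_1}g_{r_2}(s,X_s)\,\dd W_s^{r_1}$, one writes $\int_{t_j}^{t_{j+1}}g_{r_2}(s,X_s)\,\dd W_s^{r_2}$ as $g_{r_2}(jh,X_{t_j})\,\Delta W^{r_2}$ plus a drift double integral $\int_{t_j}^{t_{j+1}}\!\int_{t_j}^{s}\mathcal{D}g_{r_2}(u,X_u)\,\dd u\,\dd W_s^{r_2}$ plus $\sum_{r_1}\int_{t_j}^{t_{j+1}}\!\int_{t_j}^{s}\mathcal{L}^{r_1}g_{r_2}(u,X_u)\,\dd W_u^{r_1}\,\dd W_s^{r_2}$, where $\mathcal{D}g_{r_2}$ is the Itô drift generated by $g_{r_2}$. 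Subtracting $I_4$ and the separately-listed term $g(jh,\Phi(X_{t_j}))\Delta W$ regroups everything into three remainders: (a) $\sum_{r_2}\big[g_{r_2}(jh,X_{t_j})-g_{r_2}(jh,\Phi(X_{t_j}))\big]\Delta W^{r_2}$, which by \eqref{eq_PMM:the_esti_g(t,x)-g(t,y)}, Lemma~\ref{lem_PMM:x-Phi(x)} and the independence of $\Delta W$ from $\mathcal{F}_{t_j}$ is $O(h^{5/2})$ and vanishes when $\gamma=1$; (b) the drift double integral, which by Itô's isometry and Cauchy–Schwarz is bounded in $L^2$ by $Ch\,(\E\int_{t_j}^{t_{j+1}}\|\mathcal{D}g_{r_2}\|^2\,\dd u)^{1/2}=O(h^{3/2})$ using the polynomial growth of the first and second derivatives of $g$ from \eqref{eq_PMM:the_esti_g'(t,x)} and \eqref{eq_PMM:the_esti_g'(t,x)-g'(t,y)}; and (c) the iterated stochastic integral of $\mathcal{L}^{r_1}g_{r_2}(u,X_u)-\mathcal{L}^{r_1}g_{r_2}(jh,\Phi(X_{t_j}))$, for which two applications of Itô's isometry reduce the estimate to $\int_{t_j}^{t_{j+1}}\!\int_{t_j}^{s}\|\mathcal{L}^{r_1}g_{r_2}(u,X_u)-\mathcal{L}^{r_1}g_{r_2}(jh,\Phi(X_{t_j}))\|_{L^2}^2\,\dd u\,\dd s$, whose integrand is $O(h)$ by \eqref{eq_PMM:the_esti_L(t,x)-L(t,y)} combined with Lemma~\ref{lem_PMM:the_esti_(X(t1)-PhiX(t2)}, so that term is again $O(h^{3/2})$. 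Collecting (a)–(c) with $I_1,I_2,I_5$ gives the first assertion.

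For the conditional-expectation bound I would note that $I_3$ is a stochastic integral over $[t_j,t_{j+1}]$ with $\mathcal{F}$-adapted integrand and that $\E[\Pi^{t_j,t_{j+1}}_{r_1,r_2}\mid\mathcal{F}_{t_j}]=0$, so $\E[I_3+I_4\mid\mathcal{F}_{t_j}]=0$ and only $\E[I_1\mid\mathcal{F}_{t_j}]$, $\E[I_2\mid\mathcal{F}_{t_j}]$ and the $\mathcal{F}_{t_j}$-measurable $I_5$ survive. Using $\E[X_s^{-k\tau}-X_{t_j}^{-k\tau}\mid\mathcal{F}_{t_j}]=\int_{t_j}^{s}\E[(AX_u+f(u,X_u))\mid\mathcal{F}_{t_j}]\,\dd u=O(h)$ (the diffusion integral having zero conditional mean), together with the time regularity \eqref{eq_PMM:the_esti_f(t,x)-f(s,x)} and Lemma~\ref{lem_PMM:x-Phi(x)}, the conditional mean of each integrand in $I_1,I_2$ is $O(h)$, and integrating over the step of length $h$ produces the $O(h^2)$ factor; the polynomial prefactor of degree at most $5\gamma$ arising from the space increment $f(jh,X_{t_j})-f(jh,\Phi(X_{t_j}))$ (degree $(\gamma-1)+(4\gamma+1)$) then fixes the moment order $L^{10\gamma}$. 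The main obstacle is the Milstein cancellation in parts (b)/(c) of the third paragraph: one must peel off the exact double-integral term with Itô's formula, verify that the residual coefficient differences are genuinely half an order smaller in $h$, and carry out the Hölder bookkeeping so that every polynomial-growth factor stays within the moment orders that Lemma~\ref{lem_PMM:the_pth_of_exact_solution} supplies.
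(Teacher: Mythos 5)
Your decomposition and your treatment of the drift increment, the $A$-term and the projection defect coincide with the paper's; the genuine difference lies in how you realize the Milstein cancellation in $I_3+I_4$. You use the It\^o--Taylor (Wagner--Platen) expansion, applying It\^o's formula to $g_{r_2}(\cdot,X_\cdot)$ so that the iterated integral $\sum_{r_1}\int\!\!\int \mathcal{L}^{r_1}g_{r_2}(u,X_u)\,\dd W^{r_1}_u\,\dd W^{r_2}_s$ appears explicitly and the correction term subtracts off, leaving the coefficient differences $\mathcal{L}^{r_1}g_{r_2}(u,X_u)-\mathcal{L}^{r_1}g_{r_2}(jh,\Phi(X_{t_j}^{-k\tau}))$. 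The paper instead applies the mean value theorem to $g_{r_2}(jh,\cdot)$ and substitutes the integral form of $X_s^{-k\tau}-\Phi(X_{t_j}^{-k\tau})$, so that the product $\tfrac{\partial g_{r_2}}{\partial x}(jh,\Phi(X_{t_j}^{-k\tau}))\int g_{r_1}\,\dd W^{r_1}$ cancels against $\mathcal{L}^{r_1}g_{r_2}(jh,\Phi(X_{t_j}^{-k\tau}))(W^{r_1}_s-W^{r_1}_{t_j})$ via \eqref{eq_PMM:L_r}; the MVT remainder $\mathcal{R}_{g_{r_2}}$, quadratic in $\|X_s^{-k\tau}-\Phi(X_{t_j}^{-k\tau})\|$, is precisely what produces the exponent $\max\{16\gamma+4,17\gamma+1\}$ in the statement. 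Both routes deliver the $h^{3/2}$ rate; the paper's avoids It\^o's formula on $g$ and uses only the H\"older-type condition \eqref{eq_PMM:the_esti_g'(t,x)-g'(t,y)} on the first derivative, while yours exploits the assumed $C^2$ regularity of $g$ and is closer to the classical local-error analysis of the Milstein scheme.

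Two points need repair before your argument closes. First, applying It\^o's formula to $g_{r_2}(s,X_s^{-k\tau})$ with a moving time argument requires differentiability of $g$ in $t$, which is not assumed --- only the Lipschitz-in-time bound \eqref{eq_PMM:the_esti_g(t,x)-g(s,x)} is available. You must first peel off the time increment $g_{r_2}(s,X_s^{-k\tau})-g_{r_2}(jh,X_s^{-k\tau})$ (as the paper does in its term $\mathbb{I}_{31}$) and apply It\^o's formula with the time argument frozen at $jh$. Second, in the conditional-expectation bound you pass from $\E[X_s^{-k\tau}-X_{t_j}^{-k\tau}\mid\mathcal{F}_{t_j}]=O(h)$ to the claim that ``the conditional mean of each integrand in $I_1,I_2$ is $O(h)$''; for $I_2$ the integrand $f(jh,X_s^{-k\tau})-f(jh,\Phi(X_{t_j}^{-k\tau}))$ is nonlinear in $X_s^{-k\tau}$, so the conditional expectation cannot be moved inside $f$. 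An additional linearization (MVT or It\^o's formula applied to $f(jh,X_\cdot^{-k\tau})$) is needed to isolate the martingale contribution that vanishes under $\E[\,\cdot\mid\mathcal{F}_{t_j}]$ --- this is exactly the $\mathbb{J}_1,\mathbb{J}_2,\mathbb{J}_3$ splitting in the paper. Both repairs use machinery you already invoke elsewhere, so the route is sound, but as written these two steps do not follow.
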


\begin{proof}
    [Proof of Lemma \ref{lem_PMM:the_esti_R_j+1}]
    Recalling the definition of $
    \mathcal{R}_{-k\tau+(j+1)h}$ and using a triangle inequality yield
    \begin{equation}
    \label{eq_PMM:R_j+1}
    \begin{split}
        &
        \|
        \mathcal{R}_{-k\tau+(j+1)h} 
        \|_{L^{2}(\Omega;\R^d)}\\
        & \leq
        \bigg\|
        \int_{-k\tau+jh}^{-k\tau+(j+1)h}
        A\big(
          X_{s}^{-k\tau}
          -
          \Phi
          (X_{-k\tau+jh}^{-k\tau})
        \big)
        \, \dd s 
        \bigg\|
        _{L^{2}(\Omega;\R^d)}\\
        &\quad +
        \bigg\|
        \int_{-k\tau+jh}^{-k\tau+(j+1)h}
          f
        \big(
          s,
         X_{s}^{-k\tau}
          \big)
          -
          f
         \big(
          jh,
         \Phi
         (X_{-k\tau+jh}^{-k\tau})
          \big)
        \, \dd s 
        \bigg\|
        _{L^{2}(\Omega;\R^d)}\\
        &\quad +
        \bigg\|
        \int_{-k\tau+jh}^{-k\tau+(j+1)h}
          g
         \big(
          s,
         X_{s}^{-k\tau}
          \big)
          -
          g
         \big(
          jh,
         \Phi
         (X_{-k\tau+jh}^{-k\tau})
          \big)
        \, \dd W_{s}
        \\
        & \qquad \quad  
        - 
        \sum_{r_1,r_2=1}^{m}
        \mathcal{L}^{r_1}g_{r_2}
        \big(jh,
        \Phi(X_{-k\tau+jh}^{-k\tau})
        \big)
        \Pi^{
        -k\tau+jh,
        -k\tau+(j+1)h}
        _{r_1,r_2}
        \bigg\|_{L^{2}(\Omega;\R^d)}\\
        & \quad +
        \|
        X_{-k\tau+jh}^{-k\tau}
        -
        \Phi
        (X_{-k\tau+jh}^{-k\tau})
        \|_{L^{2}(\Omega;\R^d)}\\
        & :=
        \mathbb{I}_1
        +\mathbb{I}_2
        +\mathbb{I}_3
        +\mathbb{I}_4.
    \end{split}
\end{equation}
    For the term $\mathbb{I}_1$,
    using the H\"older inequality
    and  
    Lemma  \ref{lem_PMM:the_esti_(X(t1)-PhiX(t2)} shows
    \begin{equation}
    \begin{split}
        \mathbb{I}_1
        & \leq
        \int_{-k\tau+jh}^{-k\tau+(j+1)h}
        \|
        A(
        X_s^{-k\tau}
        -
        \Phi(X_{-k\tau+jh}^{-k\tau})
        )
        \|_{L^{2}(\Omega;\R^d)}
        \,\dd s \\
        & \leq
        C h^{\frac{3}{2}}
        \Big(
          1+\sup_{k\in \mathbb{N}} \sup_{t \geq -k\tau}
          \big\| 
          X_{t}^{-k\tau} 
         \big\|
         ^{4\gamma+1}_{L^{8\gamma+2}(\Omega;\mathbb{\R}^d)}
        \Big).
    \end{split}
    \end{equation}
    For the term $\mathbb{I}_2$,
    \begin{equation}
    \begin{split}
        \mathbb{I}_2
        &\leq
        \int_{-k\tau+jh}^{-k\tau+(j+1)h}
        \big\|
         f
        \big(
          s,
         X_{s}^{-k\tau}
          \big)
          -
          f
         \big(
          jh,
         \Phi
         (X_{-k\tau+jh}^{-k\tau})
          \big)
        \big\|
        _{L^2(\Omega;\mathbb{\R}^d)}
        \,\dd s \\
        & \leq
        \underbrace
        {
        \int_{-k\tau+jh}^{-k\tau+(j+1)h}
        \big\|
         f
        \big(
          s,
         X_{s}^{-k\tau}
          \big)
          -
          f
         \big(
          jh,
         X_s^{-k\tau}
          \big)
        \big\|
        _{L^2(\Omega;\mathbb{\R}^d)}
        \,\dd s 
        }_{=:\mathbb{I}_{21}}\\
        & \quad
        +
        \underbrace
        {
        \int_{-k\tau+jh}^{-k\tau+(j+1)h}
        \big\|
         f
        \big(
          jh,
         X_{s}^{-k\tau}
          \big)
          -
          f
         \big(
          jh,
         \Phi
         (X_{-k\tau+jh}^{-k\tau})
          \big)
        \big\|
        _{L^2(\Omega;\mathbb{\R}^d)}
        \,\dd s
        }_{=:\mathbb{I}_{22}}
        .
    \end{split}
    \end{equation}
    Directly
    using
    \eqref
    {eq_PMM:the_esti_f(t,x)-f(s,x)}
    leads to
    \begin{equation}
        \mathbb{I}_{21}
        \leq
        Ch^2
        \Big(
          1+\sup_{k\in \mathbb{N}} \sup_{t \geq -k\tau}
          \big\| 
          X_{t}^{-k\tau} 
         \big\|
         ^\gamma_{L^{2\gamma}(\Omega;\mathbb{\R}^d)}
        \Big).
    \end{equation}
    For the second term,
    it follows from \eqref{eq_PMM:the_esti_f(t,x)-f(t,y)},
    \eqref{eq_PMM:Phi(x)}
    and Lemma \ref{lem_PMM:the_esti_(X(t1)-PhiX(t2)}
    that  for $q_1:=
    \tfrac{5\gamma}{\gamma-1}$,
    $q_2:=
    \tfrac{5\gamma}{4\gamma+1}$
    $
    \Big(\tfrac{1}{q_1}+\tfrac{1}{q_2}=1
    \Big)
    $,
    \begin{equation}
    \label{eq_PMM:I_22}
    \begin{split}
        \mathbb{I}_{22}
        &\leq
        \int_{-k\tau+jh}^{-k\tau+(j+1)h}
        \Big\|
        C
        \big(1+\|X_s^{-k\tau}\|
        +
        \|
        \Phi(X^{-k\tau}_{-k\tau+jh})
        \|
        \big)^{\gamma-1}
        \|
        X_s^{-k\tau}
        -
        \Phi(X_{-k\tau+jh}^{-k\tau})
        \|
        \Big\|_{L^{2}(\Omega;\R)}
        \,\dd s \\
        & \leq
        \int_{-k\tau+jh}^{-k\tau+(j+1)h}
        C
        \big\|
        \big(1+\|X_s^{-k\tau}\|
        +
        \|X^{-k\tau}_{-k\tau+jh}\|
        \big)
        \big\|_{L^{2q_1(\gamma-1)}(\Omega;\R)}
        ^{\gamma-1}
        \times
        \big\|
        X_s^{-k\tau}
        -
        \Phi
        (
        X^{-k\tau}
        _{-k\tau+jh}
        )
        \big\|_{L^{2q_2}(\Omega;\R^d)}
        \,\dd s \\
        & \leq
        Ch^{\frac{1}{2}}
        \int_{-k\tau+jh}^{-k\tau+(j+1)h}
        \Big(
          1+\sup_{k\in \mathbb{N}} \sup_{t \geq -k\tau}
          \big\| 
          X_{t}^{-k\tau} 
         \big\|
         ^{\gamma-1}_{L^{2q_1(\gamma-1)}(\Omega;\mathbb{\R}^d)}
         \Big)
         \Big(
          1+
          \sup
          _{k\in \mathbb{N}} 
          \sup
          _{t \geq -k\tau}
          \big\| 
          X_{t}^{-k\tau} 
         \big\|
         ^{4\gamma+1}
         _{L^{2q_2
         (\gamma+1)
         } (\Omega;
         \mathbb{\R}^d)
         }
         \Big)
         \,\dd s\\
         & \leq
         C h^{\frac{3}{2}}
        \Big(
          1+\sup_{k\in \mathbb{N}} \sup_{t \geq -k\tau}
          \big\| 
          X_{t}^{-k\tau} 
         \big\|
         ^{5\gamma}
         _{L^{10\gamma}
        (\Omega;\mathbb{\R}^d)
         }
        \Big).
    \end{split}
    \end{equation}
    According to Lemma
    \ref{lem_PMM:the_esti_(X(t1)-PhiX(t2)} with
    $p=\frac{10\gamma}{4\gamma+1}$,
    we need to ensure 
    $1 
    \leq
    \frac{10\gamma}{4\gamma+1} 
    \leq \frac{2p^{*}}{4\gamma+1}$,
     $\gamma \in \big[ 1, \frac{p^*}{5} \big)$.
    Therefore,
    \begin{equation}
        \mathbb{I}_2
        \leq
        C h^{\frac{3}{2}}
        \Big(
          1+\sup_{k\in \mathbb{N}} \sup_{t \geq -k\tau}
          \big\| 
          X_{t}^{-k\tau} 
         \big\|
         ^{5\gamma}_{L^{10\gamma}(\Omega;\mathbb{\R}^d)}
        \Big).
    \end{equation}
    In view of the It\^o isometry, we get
    \begin{equation}
    \label{eq_PMM:|I_3|^2}
    \begin{split}
        |\mathbb{I}_3|^2 
        &=
        \sum_{r_2=1}^m        \int_{-k\tau+jh}^{-k\tau+(j+1)h}
        \E\Big[
        \Big\|
          g_{r_2}
         \big(
          s,
         X_{s}^{-k\tau}
          \big)
          -
          g_{r_2}
         \big(
          jh,
         \Phi
         (X_{-k\tau+jh}^{-k\tau})
          \big) \\
          & 
          \qquad \qquad \qquad
          \qquad \qquad 
          - 
          \sum_{r_1=1}^{m}
        \mathcal{L}^{r_1}g_{r_2}
        \big(jh,
        \Phi(X_{-k\tau+jh}^{-k\tau})
        \big)
        (W_s^{r_1}-W_{t_j}^{r_1})
        \Big\|^2
        \Big]
        \,\dd s \\
        & \leq
        2\sum_{r_2=1}^m 
        \int_{-k\tau+jh}^{-k\tau+(j+1)h}
        \E \Big[
        \Big\|
          g_{r_2}
         \big(
          s,
         X_{s}^{-k\tau}
          \big)
          -
          g_{r_2}
         \big(
          jh,
          X_{s}^{-k\tau}
          \big)
          \Big\|^2 
          \Big]
          \,\dd s \\
         & \quad +
        2 \sum_{r_2=1}^m
        \int_{-k\tau+jh}^{-k\tau+(j+1)h}
        \E\Big[
        \Big\|
          g_{r_2}
         \big(
          jh,
         X_{s}^{-k\tau}
          \big) 
          -
          g_{r_2}
         \big(
          jh,
         \Phi
         (X_{-k\tau+jh}^{-k\tau})
          \big) \\
          & 
          \qquad  \qquad
          \qquad \qquad
          \qquad \quad
          - 
          \sum_{r_1=1}^{m}
          \mathcal{L}^{r_1}g_{r_2}
          \big(
          jh,
         \Phi
         (X_{-k\tau+jh}^{-k\tau})
          \big)
        (W_s^{r_1}-W_{t_j}^{r_1})
        \Big\|^2
        \Big]
        \,\dd s \\
        & :=
        \mathbb{I}_{31}
        +
        \mathbb{I}_{32}.
    \end{split}
    \end{equation}
    For the term $\mathbb{I}_{31}$, we denote $\mathbb{T}_1
         :=
        \Big\|
          g_{r_2}
         \big(
          s,
         X_{s}^{-k\tau}
          \big) 
          -
          g_{r_2}
         \big(
          jh,
         X_{s}^{-k\tau})
          \big)
        \Big\|
        $.
    For $s \in [jh,(j+1)h)$ and $r_2 \in [m]$,
    using \eqref{eq_PMM:the_esti_g(t,x)-g(s,x)} shows
   \begin{equation}
    \begin{split}
         \|
         \mathbb{T}_1
         \|
         ^2
         _{L^2(\Omega;
         \R^d)}
          & \leq
          \Big\|
          C
          \big(
          1+
          \|
          X_{s}
          ^{-k\tau}
          \|
          \big)
          ^{\frac{\gamma+1}{2}}
          _{L^{\gamma+1}(\Omega;\R^d)}
          |s-jh|
          \Big\|
          ^2
          \\
          & \leq
          Ch^2
          \Big(
          1+\sup_{k \in \N}
          \sup_{t\geq -k\tau}
          \|X_{t}^{-k\tau}\|
          ^{\gamma+1}
          _{L^{2(\gamma+1)}(\Omega;\R^d)}
          \Big).
    \end{split}
   \end{equation}
   As a result,
   one can see that
   \begin{equation}
   \label{eq_PMM:I_31}
       \mathbb{I}_{31}
       \leq
       Ch^3
          \Big(
          1+\sup_{k \in \N}
          \sup_{t\geq -k\tau}
          \|X_{t}^{-k\tau}\|
          ^{\gamma+1}
          _{L^{2(\gamma+1)}(\Omega;\R^d)}
          \Big).
   \end{equation}
   Next, we treat the term $\mathbb{I}_{32}$ and
   denote
   \begin{equation}
       \begin{split}
           \mathbb{T}_2
           & :=
           \underbrace{
            g_{r_2}
            \big(
            j h,
            X_{s}^{-k\tau}
            \big) 
            -
            g_{r_2}
            \big(
            j h,
            \Phi
            (
            X_{-k\tau+jh}
            ^{-k\tau}
            )
            \big)
            }_{=:\mathbb{T}_{21}}
            - \sum_{r_1=1}
            ^{m}
            \mathcal{L}
            ^{r_1}g_{r_2}
            \big(
            jh,
            \Phi
            (
            X_{-k\tau+jh}
            ^{-k\tau}
            )
            \big)
            (
            W_s^{r_1}
            -
            W_{t_j}^{r_1}
            ).
       \end{split}
   \end{equation}
    Applying the mean value theorem yields
   \begin{equation}
   \label{eq_PMM:g(t,X)-g(t,Phi(x)}
    \begin{split}
           \mathbb{T}
           _{21}
           &= 
          \tfrac{\partial g_{r_2}}{\partial x}
          \big(
          jh,
         \Phi
         (X_{-k\tau+jh}^{-k\tau})
          \big)
          \big(
          X_{s}^{-k\tau}
          -
          \Phi
          (
          X_{-k\tau+jh}
          ^{-k\tau}
          )
          \big)
          +
          \mathcal{R}
          _{g_{r_2}} \\
          &=
           \tfrac
           {\partial g_{r_2}}{\partial x}
            \big(
            jh,
            \Phi
            (X_{-k\tau+jh}^{-k\tau})
            \big)
            \big(
             X_{s}^{-k\tau}
             -
             X_{-k\tau+jh}
             ^{-k\tau}
             +
             X_{-k\tau+jh}
             ^{-k\tau}
             -
             \Phi
             (
             X_{-k\tau+jh}
             ^{-k\tau}
             )
           \big)
            +
            \mathcal{R}
            _{g_{r_2}} \\
            & =
            \tfrac{\partial g_{r_2}}{\partial x}
            \big(
            jh,
            \Phi
            (
            X_{-k\tau+jh}
            ^{-k\tau}
            )
            \big)
            \Big(
            \int
            _{-k\tau+jh}^s
            A
            (X_{\rho}
            ^{-k\tau})
            )
            \,\dd \rho
            +
            \int
            _{-k\tau+jh}^s
            f
            (
            \rho,
            X_{\rho}
            ^{-k\tau})
            )
            \,\dd \rho 
            \\
            & 
            \quad +
            \int_{k\tau+jh}^s
            g
            (
            \rho,
            X_{\rho}^{-k\tau})
            \,\dd W_{\rho}
            +
            X_{-k\tau+jh}
             ^{-k\tau}
             -
             \Phi
             (
             X_{-k\tau+jh}
             ^{-k\tau}
             )
            \Big)
            +
            \mathcal{R}
            _{g_{r_2}},
    \end{split}
   \end{equation}
   where for short we denote
   \begin{equation}
    \begin{split}
        \mathcal{R}_{g_{r_2}}
        &:=
        \int_0^1
        \Big(
        \tfrac{\partial g_{r_2}}{\partial x}
          \big(
          jh,
         \Phi
         (X_{-k\tau+jh}^{-k\tau})
         +l
         (X_s^{-k\tau}
         -\Phi
         (X_{-k\tau+jh}^{-k\tau}))
          \big)
          -
          \tfrac{\partial g_{r_2}}{\partial x}
          \big(
          jh,
         \Phi
         (X_{-k\tau+jh}^{-k\tau})
         \big)
         \Big)\\
         & \qquad \times
         (X_s^{-k\tau}
         -\Phi
         (X_{-k\tau+jh}^{-k\tau}))
         \,\dd l.
    \end{split}
   \end{equation}
   Now it follows from \eqref{eq_PMM:g(t,X)-g(t,Phi(x)} that
   \begin{equation}
    \begin{split}
        & \|
        \mathbb{T}_2
        \|
        _{L^2(\Omega;\R^d)} \\
        & \leq
        \underbrace
        {
        \Big\|
        \tfrac
        {\partial g_{r_2}}{\partial x}
            \big(
            jh,
            \Phi
            (
            X_{-k\tau+jh}
            ^{-k\tau}
            )
            \big)
            \int_{-k\tau+jh}^s
            A
            X_{\rho}^{-k\tau}
            \,\dd  \rho
        \Big\|
        _{L^2(\Omega;\R^d)} 
        }_{=:\mathbb{B}_1}\\
        & \quad +
         \underbrace
         {
         \Big\|
         \tfrac
         {\partial g_{r_2}}{\partial x}
            \big(
            jh,
            \Phi
            (
            X_{-k\tau+jh}
            ^{-k\tau}
            )
           \big)
           \int_{-k\tau+jh}^s
            f(
            \rho,
            X_{\rho}^{-k\tau}
            )
            \,\dd  \rho
        \Big\|
        _{L^2(\Omega;\R^d)}
         }_{=:\mathbb{B}_2}
        +
        \|
        \mathcal{R}_{g_{r_2}}
        \|
        _{L^2(\Omega;\R^d)}\\
        & \quad +
        \underbrace
        {
        \sum_{r_1=1}^m
        \Big\|
          \tfrac
          {\partial g_{r_2}}{\partial x}
          \Big( 
           jh,
           \Phi
           (
           X_{-k\tau+jh}^{-k\tau}
           )
           \Big)
           \int_{-k\tau+jh}^s
           \Big(
            g_{r_1}
            \big(
            \rho,
            (X_{\rho}^{-k\tau})
            \big)
            -
            g_{r_1}
            \big(
              jh,
              \Phi
              (
              X_{-k\tau+jh}^{-k\tau}
              )
           \big)
           \Big)
        \,\dd W_{\rho}^{r_1}
    \Big\|
    _{L^2(\Omega;\R^d)}
     }_{=:\mathbb{B}_3}\\
     & \quad +
     \underbrace
     {
        \Big\|
             \tfrac
              {\partial g_{r_2}}{\partial x}
              \big(
             jh,
             \Phi
             (
              X_{-k\tau+jh}
              ^{-k\tau}
              )
            \big)
            \big(
             X_{-k\tau+jh}
             ^{-k\tau}
             -
             \Phi
             (
             X_{-k\tau+jh}
             ^{-k\tau}
             )
             \big)
        \Big\|
    _{L^2(\Omega;\R^d)}
     }_{=:\mathbb{B}_4}.
    \end{split}
    \end{equation}
   In the following we cope with the above 
   five items separately.
   According to \eqref{eq_PMM:the_esti_g'(t,x)} and the H\"older inequality,
    one can see that
   \begin{equation}
       \begin{split}
        \mathbb{B}_1
         & 
         \leq
         C
         \int_{-k\tau+jh}^s
         \Big\|
         \big(
            1+
            \|
            \Phi
            (
            X_{-k\tau+jh}
            ^{-k\tau}
            )
            \|
         \big)
         ^{\frac{\gamma-1}{2}}
            \big\|
             A
             X_{\rho}^{-k\tau}
            \big\|
         \Big\|
         _{L^2(\Omega;\R^d)} 
         \,\dd \rho\\
         & 
         \leq
         C
         \int_{-k\tau+jh}^s
         \Big\|
         \big(
            1+
            \|
            X_{-k\tau+jh}
            ^{-k\tau}
            \|
         \big)
         ^{\frac{\gamma-1}{2}}
         \|
        A X_{\rho}^{-k\tau}
         \|
         \Big\|
         _{L^2(\Omega;\R^d)} 
         \,\dd \rho\\
         & \leq
         Ch
         \Big(
          1
          +
          \sup_{k \in \N}
          \sup_{t\geq -k\tau}
          \|X_{t}^{-k\tau}\|
          ^{\frac{\gamma+1}{2}}
          _{L^{\gamma+1}(\Omega;\R^d)}
          \Big).
    \end{split}
   \end{equation}
   Owing to \eqref{eq_PMM:the_esti_g'(t,x)},
   \eqref{eq_PMM:the_esti_f(t,x)} and the H\"older inequality,
   \begin{equation}
    \begin{split}
         \mathbb{B}_2
         & \leq
         C
         \int_{-k\tau+jh}^s
         \Big\|
         \big(
         1+
         \|
         \Phi
         (X_{-k\tau+jh}^{-k\tau})
         \|
         \big)
         ^{\frac{\gamma-1}{2}}
         \big\|
         f(
         \rho,
         X_{\rho}^{-k\tau}
         )
         \big\|
         \Big\|_{L^2(\Omega;\R^d)} 
         \,\dd \rho\\
         & \leq
         C
         \int_{-k\tau+jh}^s
         \Big\|
         \big(1+
         \|
         X_{-k\tau+jh}^{-k\tau}
         \|
         \big)^{\frac{\gamma-1}{2}}
         \big(
         1+
         \|
         X_{\rho}^{-k\tau})\|
         \big)^{\gamma}
         \Big\|
         _{L^2(\Omega;\R^d)} 
         \,\dd \rho\\
         & \leq
         Ch
         \Big(
          1+\sup_{k \in \N}
          \sup_{t\geq -k\tau}
          \|X_{t}^{-k\tau}\|
          ^{\frac{3\gamma-1}{2}}_{L^{3\gamma-1}(\Omega;\R^d)}
          \Big).
    \end{split}
   \end{equation}
   Armed with the condition \eqref{eq_PMM:the_esti_g'(t,x)-g'(t,y)},
   \eqref{eq_PMM:Phi(x)}
   and   
   Lemma
   \ref{lem_PMM:the_esti_(X(t1)-PhiX(t2)},
   one can further use the H\"older inequality to acquire
   \begin{equation} 
    \begin{split}
        &\|
        \mathcal{R}_{g_{r_2}}
        \|_{L^2(\Omega;\R^d)} \\
        &\leq
        \int_0^1
        \Big\|
        \Big[
        \tfrac{\partial g_{r_2}}{\partial x}
          \big(
          jh,
         \Phi
         (X_{-k\tau+jh}^{-k\tau})
         +l
         (X_s^{-k\tau}
         -\Phi
         (X_{-k\tau+jh}^{-k\tau}))
          \big)
          -
          \tfrac{\partial g_{r_2}}{\partial x}
          \big(
          jh,
         \Phi
         (X_{-k\tau+jh}^{-k\tau})
         \big)
         \Big]\\
         & \quad \times
         (X_s^{-k\tau}
         -\Phi
         (X_{-k\tau+jh}^{-k\tau}))
         \Big\|         _{L^2(\Omega;\R^d)}
         \,\dd l \\
         & \leq
         C
         \int_0^1
         \Big\|
         \big(
         1+
         \|l X_s^{-k\tau}
         +(1-l)
         \Phi
         (X_{-k\tau+jh}^{-k\tau})\|
         +
         \|\Phi
         (X_{-k\tau+jh}^{-k\tau})\|
         \big)^
         {\max\big\{\frac{\gamma-3}{2},0\big\}}\\
         & \quad \times
         \|X_s^{-k\tau}
         -\Phi
         (X_{-k\tau+jh}^{-k\tau})\|^2
         \Big\|_{L^2(\Omega;\R^d)}
         \,\dd l \\
         & \leq
         Ch
         \Big(
          1+\sup_{k \in \N}
          \sup_{t\geq -k\tau}
          \|X_{t}^{-k\tau}\|
          ^{
          \frac{\max\{16\gamma+4,
          17\gamma+1
          \}}{2}}_{L^{\max\{16\gamma+4,
          17\gamma+1\}}(\Omega;\R^d)}
          \Big).
    \end{split}
   \end{equation}
   According to Lemma
    \ref{lem_PMM:the_esti_(X(t1)-PhiX(t2)} with
    $p=\frac{\max\{16\gamma+4,17\gamma+1\}}{4\gamma+1}$,
    we need to ensure
    $1 <
    \frac{\max\{16\gamma+4,17\gamma+1\}}{4\gamma+1}
    \leq
    \frac{2p^*}{4\gamma+1}$,
    $\gamma \in [1,\frac{2p*-1}{17}\big)$.
   Again, using the It\^o isometry and the H\"older inequality gives
   \begin{equation}
        \mathbb{B}_3
         =
         \sum_{r_1=1}^m
         \Big(
         \int_{-k\tau+jh}^s
         \Big\|
          \tfrac
          {\partial g_{r_2}}{\partial x}
          \Big(
            jh,
            \Phi
            (X_{-k\tau+jh}
            ^{-k\tau}
          \Big)
          \Big(
            g_{r_1}
            (
            \rho,
            X_{\rho}^{-k\tau}
            )
            -
            g_{r_1}
            \big(
            jh,
           \Phi
            (X_{-k\tau+jh}^{-k\tau})
           \Big)
           \Big\|^2
           _{L^2(\Omega;\R^d)}
            \,\dd \rho
            \Big)
            ^{\frac{1}{2}}.
   \end{equation}
   Now, 
   it follows from
   \eqref{eq_PMM:the_esti_g(t,x)-g(t,y)} and \eqref{eq_PMM:the_esti_g(t,x)-g(s,x)} 
   that
   \begin{equation}
    \begin{split}
        &\Big\|
           g_{r_1}
            \big(
            \rho,
            X_{\rho}^{-k\tau}
            \big)
           -
            g_{r_1}
            \big(
            jh,
            \Phi
            (X_{-k\tau+jh}^{-k\tau})
            \big)
        \Big\| \\
        & \leq
         \Big\|
         g_{r_1}
            \big(
             \rho,
             X_{\rho}^{-k\tau}
            \big)
         -
        g_{r_1}
            \big(
            \rho,
            \Phi
            (X_{-k\tau+jh}^{-k\tau})
            \big)
         \Big\|
          +
          \Big\|
           g_{r_1}
            \big(
            \rho,
            \Phi
            (X_{-k\tau+jh}^{-k\tau})
            \big)
          -
           g_{r_1}
            \big(
            jh,
            \Phi
            (
            X_{-k\tau+jh}^{-k\tau}
            )
            \big)
          \Big\| \\
          & \leq
            C
            \Big(
            1+
            \Big\|
            X_{\rho}^{-k\tau}
            \Big\|
            +
            \Big\|
            \Phi
            (X_{-k\tau+jh}
            ^{-k\tau})
            \Big\|
            \Big)
            ^{{\frac{\gamma-1}{2}}}
            \Big\|
            X_{\rho}^{-k\tau}
            -
            \Phi
            (
            X_{-k\tau+jh}
            ^{-k\tau})
            \Big\| \\
         & \quad +
         C
         \Big(
         1+
         \Big\|\Phi
         (X_{-k\tau+jh}^{-k\tau})\Big\|
         \Big)^{\frac{\gamma+1}{2}}
         |\rho-jh|.
    \end{split}
   \end{equation}
   With the help of \eqref{eq_PMM:the_esti_g'(t,x)} and
   Lemma \ref{lem_PMM:Phi(x)},
   one can infer that
   \begin{equation}
    \begin{split}
        &\Big\|
          \tfrac
          {\partial g_{r_2}}{\partial x}
          \Big(
            jh,
            \Phi
            ( 
            X_{-k\tau+jh} ^{-k\tau}
            )
          \Big)
          \Big(
            g_{r_1}
            \big(
            \rho,
            X_{\rho}^{-k\tau}
            \big)
            -
            g_{r_1}
            \big(
            jh,
            \Phi
            (
            X_{-k\tau+jh}
            ^{-k\tau}
            )
            \big)
          \Big)
          \Big\|
          _{L^2(\Omega;\R^d)} \\
          & \leq
          C
          \Big\|
          \big(
           1+
           \|\Phi
           (
           X_{-k\tau+jh}
           ^{-k\tau}
           )
           \|
           \big)
           ^{\frac{\gamma-1}{2}}
           \big\|
            g_{r_1}
            \big(
            \rho,
            X_{\rho}^{-k\tau}
            \big)
         -
         g_{r_1}
         \big(
          jh,
         \Phi
         (X_{-k\tau+jh}^{-k\tau})
          \big)
        \big\|
          \Big\|_{L^2(\Omega;\R)}\\
          & \leq
          C
          \Big\|
          \big(
           1+
           \|\Phi
           (
           X_{-k\tau+jh}
           ^{-k\tau}
           )
           \|
           \big)
           ^{\frac{\gamma-1}{2}}
           \cdot
            \big(
             1+
             \|
              X_{\rho}^{-k\tau}
             \|
             +
            \|\Phi
            (X_{-k\tau+jh}^{-k\tau})\|
           \big)^
           \frac{\gamma-1}{2}
           \cdot
            \|
           X_{\rho}^{-k\tau}
            -
           \Phi
            (X_{-k\tau+jh}^{-k\tau})
            \|
         \Big\|_{L^2(\Omega;\R)}\\
         & \quad +
         C
         \Big\|
         \big(
         1+
         \|
         \Phi
         (X_{-k\tau+jh}
         ^{-k\tau})\|
         \big)
         ^{\frac{\gamma-1}{2}}
         \big(
         1+
         \|\Phi
         (X_{-k\tau+jh}^{-k\tau})\|
         \big)
         ^{\frac{\gamma+1}{2}}
        \Big\|
        _{L^2(\Omega;\R)}
         |\rho-jh| \\
         & \leq
         \underbrace
         {
          C
          \Big\|
          \big(
          1+
          \|
         X_{\rho}^{-k\tau}\|
         +
         \|
         X_{-k\tau+jh}^{-k\tau}\|
         \big)^{\gamma-1}
         \|
         X_{\rho}^{-k\tau}
         -
         \Phi
         (
         X_{-k\tau+jh}^{-k\tau}
         )
         \|
         \Big\|_{L^2(\Omega;\R)}
         }_{=:\mathbb{M}}
         \\
         & \quad +
         C
         \Big\|
         \big(
         1+
         \|
         X_{-k\tau+jh}^{-k\tau}\|
         \big)^{\gamma}
         \Big\|
         _{L^2(\Omega;\R)}
         |\rho-jh|. \\
    \end{split}
   \end{equation}
   Similarly,
   recalling \eqref{eq_PMM:I_22}
   leads to
   \begin{equation}
       \begin{split}
           \mathbb{M}
            & \leq
            C
            \big\|
            \big(
            1+
            \|
            X_{\rho}^{-k\tau}
            \|
            +
            \|
            X^{-k\tau}
            _{-k\tau+jh}
            \|
            \big)
            \big\|
            _{L^{2q_1(\gamma-1)}
            (\Omega;\R)}
            ^{\gamma-1}
            \times
            \big\|
            X_{\rho}^{-k\tau}
            -
            \Phi(
            X^{-k\tau+jh}
            _{-k\tau+jh})
             \big\|
             _{L^{2q_2}(\Omega;\R^d)}
             \\
             & \leq
            Ch^{\frac{1}{2}}
             \Big(
              1
              +
              \sup_{k\in \mathbb{N}}
              \sup_{t \geq -k\tau}
               \big\| 
               X_{t}^{-k\tau} 
               \big\|
               ^{\gamma-1}
               _{L^{2q_1(\gamma-1)}(\Omega;\mathbb{\R}^d)}
              \Big)
              \Big(
              1+
              \sup
              _{k\in \mathbb{N}} 
              \sup
              _{t \geq -k\tau}
              \big\| 
              X_{t}^{-k\tau} 
              \big\|
              ^{4\gamma+1}
              _{L^{2q_2
              (\gamma+1)(\Omega;
              \mathbb{\R}^d)}
              }
              \Big)
              \,\dd s\\
              & \leq
              C h^{\frac{1}{2}}
              \Big(
               1
               +
               \sup_{k\in \mathbb{N}} 
               \sup_{t \geq -k\tau}
               \big\| 
               X_{t}^{-k\tau} 
               \big\|
               ^{5\gamma}
               _{L^{10\gamma}
               (\Omega;
               \mathbb{\R}^d)}
               \Big).
       \end{split}
   \end{equation}
   with $q_1=\frac{5\gamma}{\gamma-1},
   q_2=\frac{5\gamma}{4\gamma+1}(\frac{1}{q_1}+\frac{1}{q_2}=1)$.
   According to Lemma \ref{lem_PMM:the_esti_(X(t1)-PhiX(t2)},
    $p
    =\frac{10\gamma}{4\gamma+1}$,
    we need to ensure 
    $1 
    \leq
    \frac{10 \gamma}{4\gamma+1} 
    \leq 
    \frac{2p^{*}}{4\gamma+1}$,
    $\gamma \in \big[
    1,\frac{p^{*}}{5}
    \big]$.
   All in all, one can deduce
   \begin{equation}
       \mathbb{B}_3
       \leq
       C h^{\frac{3}{2}}
         \Big(
          1+\sup_{k \in \N}
          \sup_{t\geq -k\tau}
          \|X_{t}^{-k\tau}\|
          ^{5\gamma}
          _{L^{10\gamma}(\Omega;\R^d)}
          \Big).
   \end{equation}
    For the term $\mathbb{B}_4$,
   one can employ \eqref{eq_PMM:the_esti_g'(t,x)},
   Lemma \ref{lem_PMM:x-Phi(x)},
    \eqref{eq_PMM:Phi(x)}
    and H\"older inequality
   to get
   \begin{equation}
       \begin{split}
           \mathbb{B}_4
           & \leq
           C h^2
           \Big\|
           \big(
           1+
           \|
           \Phi
           (
           X_{-k\tau+jh}^{-k\tau}
           )
           \|
           \big)
           ^{\frac{\gamma-1}{2}}
           \|
           X_{-k\tau+jh}^{-k\tau}
           \|^{4\gamma+1}
           \Big\|
           _{L^2(\Omega;\R^d)} \\
           & \leq
           C h^2
           \Big\|
           \big(
           1+
           \|
           X_{-k\tau+jh}^{-k\tau}
           \|
           \big)
           ^{\frac{\gamma-1}{2}}
           \|
           X_{-k\tau+jh}^{-k\tau}
           \|^{4\gamma+1}
           \Big\|
           _{L^2(\Omega;\R^d)} \\
           & \leq
           C h^2
           \big(
           1+
           \|
           X_{-k\tau+jh}^{-k\tau}
           \|
           \big)
           ^{\frac{\gamma-1}{2}}
           _{L^{v_1(\gamma-1)}(\Omega;\R)}
           \|
           X_{-k\tau+jh}^{-k\tau}
           \|^{4\gamma+1}
           _{L^{2v_2(4\gamma+1)}(\Omega;\R^d)},
       \end{split}
   \end{equation}
   where we take $v_1=\frac{9\gamma+1}{\gamma-1}$,
   $v_2=\frac{9\gamma+1}{8\gamma+2}$ such that $v_1(\gamma-1)=2v_2(4\gamma+1)$, $\frac{1}{v_1}+\frac{1}{v_2}=1$
   and one can then arrive at 
   \begin{equation}
       \mathbb{B}_4
       \leq
       Ch^2
         \Big(
          1
          +
          \sup_{k \in \N}
          \sup_{t\geq -k\tau}
          \|X_{t}^{-k\tau}\|
          ^{\frac{9\gamma+1}{2}}
          _{L^{9\gamma+1}(\Omega;\R^d)}
          \Big).
   \end{equation}
   To sum up,
   one can obtain
   \begin{equation}
   \label{eq_PMM:I_32}
       \mathbb{I}_{32}
       \leq
        Ch
        \Big(
        1+\sup_{k \in \N}
        \sup_{t\geq -k\tau}
        \|X_{t}^{-k\tau}\|
        ^{
        \frac{\max\{16\gamma+4,
        17\gamma+1
        \}}{2}}_{L^{\max\{16\gamma+4,
        17\gamma+1\}}(\Omega;\R^d)}
        \Big).
   \end{equation}
   This together with
   \eqref{eq_PMM:|I_3|^2} and
   \eqref{eq_PMM:I_31} yields
   \begin{equation}
       \mathbb{I}_3
       \leq
       Ch^{\frac{3}{2}}
       \Big(
        1+\sup_{k \in \N}
        \sup_{t\geq -k\tau}
        \|X_{t}^{-k\tau}\|
        ^{
        \frac{\max\{16\gamma+4,
        17\gamma+1
        \}}{2}}_{L^{\max\{16\gamma+4,
        17\gamma+1\}}(\Omega;\R^d)}
        \Big).
   \end{equation}
   With regard to $\mathbb{I}_4$,
   we use Lemma \ref{lem_PMM:x-Phi(x)} to show
   \begin{equation}
       \mathbb{I}_4
       \leq
       Ch^2
       \|
       X_{-k\tau+jh}
       ^{-k\tau}
       \|
       ^{4\gamma+1}
       _{L^{8\gamma+2}(\Omega;\R^d)}.
   \end{equation}
   Putting all the above estimates together we derive from \eqref{eq_PMM:R_j+1} that
   \begin{equation}
        \|
        \mathcal{R}_{-k\tau+(j+1)h} 
        \|_{L^{2}(\Omega;\R^d)}
        \leq
        Ch^{\frac{3}{2}}
         \Big(
        1+\sup_{k \in \N}
        \sup_{t\geq -k\tau}
        \|X_{t}^{-k\tau}\|
        ^{
        \frac{\max\{16\gamma+4,
        17\gamma+1
        \}}{2}}_{L^{\max\{16\gamma+4,
        17\gamma+1\}}(\Omega;\R^d)}
        \Big).
   \end{equation}
   Noting that the stochastic integral vanishes under the conditional expectation, we derive
   \begin{equation}
   \label{eq_PMM:R_j+1|F}
    \begin{split}
        &\big\|
        \E[
        \mathcal{R}_{-k\tau+(j+1)h}
        | \mathcal{F}
        _{-k\tau+jh}
        ]\big\|_{L^2(\Omega;\R^d)}\\
        & \leq
        \bigg\|
        \E\bigg[
        \int_{-k\tau+jh}^{-k\tau+(j+1)h}
        A\big(
          X_{s}^{-k\tau}
          -
          \Phi
          (X_{-k\tau+jh}^{-k\tau})
        \big)
        \, \dd s 
        \Big|\mathcal{F}_{-k\tau+jh}
        \bigg]
        \bigg\|_{L^2(\Omega;\R^d)}\\
        & \quad +
        \bigg\|
        \E\bigg[
        \int_{-k\tau+jh}^{-k\tau+(j+1)h}
          f
        \big(
          s,
         X_{s}^{-k\tau}
          \big)
          -
          f
         \big(
          jh,
         \Phi
         (X_{-k\tau+jh}^{-k\tau})
          \big)
        \, \dd s 
        \Big|\mathcal{F}_{-k\tau+jh}
        \bigg]
        \bigg\|_{L^2(\Omega;\R^d)}\\
        & \quad +
        \|
        X_{-k\tau+jh}^{-k\tau}
        -
        \Phi
        (X_{-k\tau+jh}^{-k\tau})
        \|_{L_2(\Omega;\R^d)} \\
        & :=
        \mathbb{I}_5
        +
        \mathbb{I}_6
        +
        \mathbb{I}_7.
    \end{split}
   \end{equation}
   In order to estimate $\mathbb{I}_5$,
   we first note that
   \begin{equation}
    \begin{split}
        &\E
        \bigg[
        \int_{-k\tau+jh}^{-k\tau+(j+1)h}
        \int_{-k\tau+jh}^{s}
        g
        \big(
        r,
        X_{r}^{-k\tau}
        \big)
        \, \dd W_{r}
        \, \dd s
        \Big|\mathcal{F}_{-k\tau+jh}
        \bigg] \\
        &=
        \int_{-k\tau+jh}^{-k\tau+(j+1)h}
        \E
        \bigg[
        \int_{-k\tau+jh}^{s}
        g
        \big(
        r,
        X_{r}^{-k\tau}
        \big)
        \, \dd W_{r}
        \Big|\mathcal{F}_{-k\tau+jh}
        \bigg] 
        \, \dd s\\
        & =
        0.
    \end{split}
   \end{equation}
   As a result,
   one infers that
   \begin{equation}
    \begin{split}
        &
        \E\bigg[
        \int_{-k\tau+jh}^{-k\tau+(j+1)h}
        A\big(
          X_{s}^{-k\tau}
          -
          \Phi
          (X_{-k\tau+jh}^{-k\tau})
        \big)
        \, \dd s 
        \Big|\mathcal{F}_{-k\tau+jh}
        \bigg]
        \\
        &=
        \E\bigg[
        \int_{-k\tau+jh}^{-k\tau+(j+1)h}
            A\big(
                X_{s}^{-k\tau}
                -
                X_{-k\tau+jh}
                ^{-k\tau}
                +
                X_{-k\tau+jh}
                ^{-k\tau}
                -
                \Phi
                (X_{-k\tau+jh}
                ^{-k\tau})
            \big)
        \, \dd s 
        \Big|\mathcal{F}_{-k\tau+jh}
        \bigg]\\
        & =
        \E
        \bigg[
            \int_{-k\tau+jh}
            ^{-k\tau+(j+1)h}
            \int_{-k\tau+jh}^r
            A^2
            X_{r}^{-k\tau}
            +
            A
             f
            \big(
             r,
             X_{r}^{-k\tau}
            \big)
            \,\dd r 
            \,\dd s
            \Big|
            \mathcal{F}_{-k\tau+jh}
        \bigg] \\
        & \quad +
        \E\bigg[
        \int_{-k\tau+jh}^{-k\tau+(j+1)h}
            A\big(
                X_{-k\tau+jh}
                ^{-k\tau}
                -
                \Phi
                (X_{-k\tau+jh}
                ^{-k\tau})
            \big)
        \, \dd s 
        \Big|\mathcal{F}_{-k\tau+jh}
        \bigg] .\\
    \end{split}
   \end{equation}
   Based on the Jensen inequality and the H\"older inequality,
   according to \eqref{eq_PMM:the_esti_f(t,x)},
   one can show that
   \begin{equation}
    \begin{split}
        & 
        \bigg\|
        \E
        \bigg[
        \int_{-k\tau+jh}^{-k\tau+(j+1)h}
        \int_{-k\tau+jh}^r
        A^2
        X_{r}^{-k\tau}
        +
        A
        f
        \big(
        r,
        X_{r}^{-k\tau}
        \big)
        \,\dd r 
        \,\dd s
        \Big|\mathcal{F}_{-k\tau+jh}
        \bigg]
        \bigg\|_{L^2(\Omega;\R^d)}\\
        & \leq
        \bigg\|
        \int_{-k\tau+jh}^{-k\tau+(j+1)h}
        \int_{-k\tau+jh}^r
        A^2
        X_{r}^{-k\tau}
        +
        A
        f
        \big(
        r,
        X_{r}^{-k\tau}
        \big)
        \,\dd r 
        \,\dd s
        \bigg\|_{L^2(\Omega;\R^d)} \\
        & \leq
        \int_{-k\tau+jh}^{-k\tau+(j+1)h}
        \int_{-k\tau+jh}^r
        \big\|
        A^2
        X_{r}^{-k\tau}
        +
        A
        f
        \big(
        r,
        X_{r}^{-k\tau}
        \big)
        \big\|
        _{L^2(\Omega;\R^d)}
        \,\dd r 
        \,\dd s \\
        & \leq
                \int_{-k\tau+jh}^{-k\tau+(j+1)h}
        \int_{-k\tau+jh}^r
        \big\|
        A^2
        X_{r}^{-k\tau}
        \big\|_{L^2(\Omega;\R^d)}
        \,\dd r 
        \,\dd s \\
        & \quad +
                \int_{-k\tau+jh}^{-k\tau+(j+1)h}
        \int_{-k\tau+jh}^r
        \big\|
        A
        f
        \big(
        r,
        X_{r}^{-k\tau}
        \big)
        \big\|
        _{L^2(\Omega;\R^d)}
        \,\dd r 
        \,\dd s \\
        & \leq
        C h^2
        \Big(
          1+\sup_{k\in \mathbb{N}} \sup_{t \geq -k\tau}
          \big\| 
          X_{t}^{-k\tau} 
         \big\|
         ^\gamma_{L^{2\gamma}(\Omega;\mathbb{\R}^d)}
        \Big).
    \end{split}
   \end{equation}
   In light of the Jensen inequality,
   the H\"older inequality and 
   Lemma \ref{lem_PMM:x-Phi(x)},
   one can get
   \begin{equation}
    \begin{split}
        & 
        \bigg\|
        \E
        \bigg[
        \int_{-k\tau+jh}^{-k\tau+(j+1)h}
        A
        \big(
        X_{-k\tau+jh}^{-k\tau}
        -
        \Phi
        (
        X_{-k\tau+jh}^{-k\tau}
        )
        \big)
        \,\dd s
        \Big|\mathcal{F}_{-k\tau+jh}
        \bigg]
        \bigg\|_{L^2(\Omega;\R^d)}\\
        & \leq
        \bigg\|
        \int_{-k\tau+jh}^{-k\tau+(j+1)h}
        A
        \big(
        X_{-k\tau+jh}^{-k\tau}
        -
        \Phi
        (
        X_{-k\tau+jh}^{-k\tau}
        )
        \big)
        \,\dd s
        \bigg\|
        _{L^2(\Omega;\R^d)} \\
        & \leq
        \int_{-k\tau+jh}^{-k\tau+(j+1)h}
        \big\|
        A
        \big(
        X_{-k\tau+jh}^{-k\tau}
        -
        \Phi
        (
        X_{-k\tau+jh}^{-k\tau}
        )
        \big)
        \big\|
        _{L^2(\Omega;\R^d)}
        \,\dd s \\
        & \leq
        C 
        h^3
        \|
        X_{-k\tau+jh}
        ^{-k\tau}
        \|
        ^{4\gamma+1}
        _{L^{8\gamma+2}(\Omega;\R^d)}.
    \end{split}
   \end{equation}
   Therefore,
   \begin{equation}
       \mathbb{I}_5
       \leq
        C h^2
        \Big(
          1+\sup_{k\in \mathbb{N}} \sup_{t \geq -k\tau}
          \big\| 
          X_{t}^{-k\tau}
         \big\|
         ^{4\gamma+1}_{L^{8\gamma+2}(\Omega;\mathbb{\R}^d)}
        \Big).
   \end{equation}
   With regard to $\mathbb{I}_6$,
   we first rewrite it as follows
   \begin{equation}
    \begin{split}
       \mathbb{I}_6 
        & =
        \bigg\|
        \E\bigg[
        \int_{-k\tau+jh}^{-k\tau+(j+1)h}
          f
        \big(
          s,
         X_{s}^{-k\tau}
          \big)
          -
          f
         \big(
          jh,
         \Phi
         (X_{-k\tau+jh}^{-k\tau})
          \big)
        \, \dd s 
        \Big|\mathcal{F}_{-k\tau+jh}
        \bigg]
        \bigg\|_{L^2(\Omega;\R^d)}\\
        & =
        \bigg\|
        \E\bigg[
        \int_{-k\tau+jh}^{-k\tau+(j+1)h}
          f
        \big(
          s,
         X_{s}^{-k\tau}
          \big)
          -
          f
         \big(
          jh,
          X_{s}^{-k\tau}
          \big)
        \, \dd s 
        \Big|\mathcal{F}_{-k\tau+jh}
        \bigg] \\
        & \quad +
        \E\bigg[
        \int_{-k\tau+jh}^{-k\tau+(j+1)h}
          f
        \big(
          jh,
         X_{s}^{-k\tau}
          \big)
          -
          f
         \big(
          jh,
         \Phi
         (X_{-k\tau+jh}^{-k\tau})
          \big)
        \, \dd s 
        \Big|\mathcal{F}_{-k\tau+jh}
        \bigg]
        \bigg\|_{L^2(\Omega;\R^d)}.
    \end{split}
   \end{equation}
   Because of the existence of the first derivative of $f$ with respect to the spatial variable,
   then for $s \in [-k\tau+jh,-k\tau+(j+1)h]$,
   \begin{equation}
    \begin{split}
        & f
        \big(
          jh,
         X_{s}^{-k\tau}
          \big)
          -
          f
         \big(
          jh,
         \Phi
         (X_{-k\tau+jh}^{-k\tau})
          \big) \\
          & =
          \int_0^1
          \frac{\partial f}{\partial x}
          \big(
          jh,
          r X_{s}^{-k\tau}
          +(1-r)
          \Phi
         (X_{-k\tau+jh}^{-k\tau})
          \big)
          \big(
          X_{s}^{-k\tau}
          -
          \Phi
         (X_{-k\tau+jh}^{-k\tau})
         \big)
         \,\dd r\\
          & =
          \int_0^1
          \frac{\partial f}{\partial x}
          \big(
          jh,
          r X_{s}^{-k\tau}
          +(1-r)
          \Phi
         (X_{-k\tau+jh}^{-k\tau})
          \big)
          \big(
          X_{s}^{-k\tau}
          -
          X_{-k\tau+jh}^{-k\tau}
          +
          X_{-k\tau+jh}^{-k\tau}
          -
          \Phi
         (X_{-k\tau+jh}^{-k\tau})
         \big)
         \,\dd r\\
         & =
         \underbrace
         {
         \int_0^1
          \frac{\partial f}{\partial x}
          \big(
          jh,
          r X_{s}^{-k\tau}
          +(1-r)
          \Phi
         (X_{-k\tau+jh}^{-k\tau})
          \big)
          \int_{-k\tau+jh}^s
          A  X_l^{-k\tau}
          +
          f
          \big(
          l,X_l^{-k\tau}
          \big)
          \,\dd l
          \,\dd r
          }_{=:\mathbb{J}_1}\\
          & \quad +
          \underbrace
          {
          \int_0^1
          \frac{\partial f}{\partial x}
          \big(
          jh,
          r X_{s}^{-k\tau}
          +(1-r)
          \Phi
         (X_{-k\tau+jh}^{-k\tau})
          \big)
          \int_{-k\tau+jh}^s
          g
          \big(
          l, X_l^{-k\tau}
          \big)
          \,\dd W_l
          \,\dd r
          }_{=:\mathbb{J}_2}\\
          & \quad +
          \underbrace
         {
         \int_0^1
          \frac{\partial f}{\partial x}
          \big(
          jh,
          r X_{s}^{-k\tau}
          +(1-r)
          \Phi
         (X_{-k\tau+jh}^{-k\tau})
          \big)
          \big(
          X_{-k\tau+jh}^{-k\tau}
          \big)
          -
          \Phi
          (
          X_{-k\tau+jh}^{-k\tau}
          )
          \,\dd r
          }_{=:\mathbb{J}_3}
          .\\
    \end{split}
   \end{equation}
   Following a similar argument,
   we get 
   \begin{equation}
       \E\bigg[
       \int_{-k\tau+jh}^{-k\tau+(j+1)h}
         \mathbb{J}_2
       |\mathcal{F}_{-k\tau+jh}
          \,\dd s
          \bigg]
          =0.
          \end{equation}
          Using the Jensen inequality and the H\"older inequality yields
        \begin{equation}
        \begin{split}
        \mathbb{I}_6
        & \leq
        \bigg\|
        \E\bigg[
        \int_{-k\tau+jh}^{-k\tau+(j+1)h}
        \mathbb{J}_1
        \, \dd s 
        \Big|\mathcal{F}_{-k\tau+jh}
        \bigg] \bigg\|_{L^2(\Omega;\R^d)}
        +
        \bigg\|
        \E\bigg[
        \int_{-k\tau+jh}^{-k\tau+(j+1)h}
        \mathbb{J}_3
        \, \dd s 
        \Big|\mathcal{F}_{-k\tau+jh}
        \bigg] \bigg\|_{L^2(\Omega;\R^d)}\\
        & \quad +
        \bigg\|
        \E\bigg[
        \int_{-k\tau+jh}^{-k\tau+(j+1)h}
          f
        \big(
          s,
         X_{s}^{-k\tau}
          \big)
          -
          f
         \big(
          jh,
         X_{s}^{-k\tau}
          \big)
        \, \dd s 
        \Big|\mathcal{F}_{-k\tau+jh}
        \bigg]
        \bigg\|_{L^2(\Omega;\R^d)} \\
        & \leq
        \bigg\|
        \int_{-k\tau+jh}^{-k\tau+(j+1)h}
        \mathbb{J}_1
        \,\dd s
        \bigg\|_{L^2(\Omega;\R^d)}
        +
        \bigg\|
        \int_{-k\tau+jh}^{-k\tau+(j+1)h}
        \mathbb{J}_3
        \,\dd s
        \bigg\|_{L^2(\Omega;\R^d)}\\
        & \quad +
        \bigg\|
        \int_{-k\tau+jh}^{-k\tau+(j+1)h}
          f
        \big(
          s,
         X_{s}^{-k\tau}
          \big)
          -
          f
         \big(
          jh,
         X_{s}^{-k\tau}
          \big)
        \, \dd s 
        \bigg\|_{L^2(\Omega;\R^d)}\\
        & \leq
        \int_{-k\tau+jh}^{-k\tau+(j+1)h}
        \|\mathbb{J}_1\|_{L^2(\Omega;\R^d)}
        \,\dd s 
        +
        \int_{-k\tau+jh}^{-k\tau+(j+1)h}
        \|\mathbb{J}_3\|_{L^2(\Omega;\R^d)}
        \,\dd s\\
        & \quad +
        \int_{-k\tau+jh}^{-k\tau+(j+1)h}
        \|
          f
        \big(
          s,
         X_{s}^{-k\tau}
          \big)
          -
          f
         \big(
          jh,
         X_{s}^{-k\tau})
          \big)
         \|_{L^2(\Omega;\R^d)}
        \, \dd s .
        \end{split}
        \end{equation}
        {\color{blue}}
        In view of the H\"older 
        inequality and 
        \eqref{eq_PMM:the_esti_f'(t,x)},
        \eqref{eq_PMM:the_esti_f(t,x)} 
        and \eqref{eq_PMM:Phi(x)}, 
        \begin{equation}
        \begin{split}
        \|\mathbb{J}_1\|_{L^2(\Omega;\R^d)}
        &\leq
        \int_0^1
        \int_{-k\tau+jh}^s
        \Big\|
        \tfrac{\partial f}{\partial x}
        \big(
        jh,
        r \Phi(X_{s}^{-k\tau})
        +(1-r)
        \Phi
        (X_{-k\tau+jh}^{-k\tau})
        \big)\\
        & \quad \times
        \Big(
        X_l^{-k\tau}
        +
        f\big(
        l,X_l^{-k\tau}
        \big)
        \Big)
        \Big\|
        _{L^2(\Omega;\R^d)}
        \,\dd l
        \,\dd r \\
        & \leq
        \int_0^1
        \int_{-k\tau+jh}^s
        \Big\|
        C\Big(
        1+
        \|
        r \Phi(X_{s}^{-k\tau})
        +(1-r)
        \Phi
        (X_{-k\tau+jh}^{-k\tau})
        \|
        \Big)^{\gamma-1} \\
        & \qquad \times
        \Big(
        1+
        \|
        X_{l}^{-k\tau}
        \|
        \Big)^{\gamma} 
        \Big\|
        _{L^2(\Omega;\R)}
        \,\dd l
        \,\dd r \\
        & \leq
        \int_0^1
        \int_{-k\tau+jh}^s
        \Big\|
        C\Big(
        1+
        \|
        r (X_{s}^{-k\tau}
        +(1-r)
        X_{-k\tau+jh}^{-k\tau}
        \|
        \Big)^{\gamma-1} \\
        & \qquad \times
        \Big(
        1+
        \|
        X_{l}^{-k\tau}
        \|
        \Big)^{\gamma}
        \Big\|
        _{L^2(\Omega;\R)}
        \,\dd l
        \,\dd r \\
        & \leq
        C h
        \Big(
        1+\sup_{k\in \mathbb{N}} \sup_{t \geq -k\tau}
        \big\| 
        X_{t}^{-k\tau} 
        \big\|
        ^{2\gamma-1}_{L^{4\gamma-2}(\Omega;\mathbb{\R}^d)}
        \Big).
        \end{split}
        \end{equation}
        With the aid of the H\"older 
        inequality,
        \eqref{eq_PMM:the_esti_f'(t,x)},
        \eqref{eq_PMM:Phi(x)}
        and 
        Lemma 
        \ref{lem_PMM:the_esti_(X(t1)-X(t2)}, one can show
        \begin{equation}
        \begin{split}
        &
        \|
        \mathbb{J}_3
        \|
        _{L^2(\Omega;\R^d)}
        \\
        &\leq
        \int_0^1
        \Big\|
        \tfrac{\partial f}{\partial x}
        \big(
        jh,
        r \Phi(X_{s}^{-k\tau})
        +(1-r)
        \Phi
        (X_{-k\tau+jh}^{-k\tau})
        \big)
        \big(
        X_{-k\tau+jh}^{-k\tau}
        -
        \Phi
         (
         X_{-k\tau+jh}^{-k\tau}
         )
        \big)
        \Big\|
        _{L^2(\Omega;\R^d)}
        \,\dd r \\
        & \leq
        \int_0^1
        \Big\|
        C\Big(
        1+
        \|
        r \Phi(X_{s}^{-k\tau})
        +(1-r)
        \Phi
        (X_{-k\tau+jh}^{-k\tau})
        \|
        \Big)^{\gamma-1} 
        \times
        Ch^2
        \|
        X_{-k\tau+jh}^{-k\tau}
        \|
        ^{4\gamma+1}
        \Big\|
        _{L^2(\Omega;\R)}
        \,\dd r \\
        & \leq
        \int_0^1 
        C h^2
        \Big\|
        \Big(
        1+
        \|
        r (X_{s}^{-k\tau}
        +(1-r)
        X_{-k\tau+jh}^{-k\tau}
        \|
        \Big)
        \Big\|
        ^{\gamma-1} 
        _{L^{2\kappa_1(\gamma-1)}(\Omega;\R)}
        \times
        \Big\|
        X_{-k\tau+jh}^{-k\tau}
        \Big\|
        ^{4\gamma+1}
        _{L^{2\kappa_2(4\gamma+1)}(\Omega;\R^d)}
        \,\dd r \\
        & \leq
        C h^2
        \Big(
        1+\sup_{k\in \mathbb{N}} \sup_{t \geq -k\tau}
        \big\| 
        X_{t}^{-k\tau} 
        \big\|
        ^{5\gamma}
        _{L^{10\gamma}(\Omega;\R^d)}
        \Big),
        \end{split}
        \end{equation}
        where we take
        $\frac{1}{\kappa_1}=
        \frac{5\gamma}{4\gamma+1}$,
        $\frac{1}{\kappa_2}=
        \frac{5\gamma}{\gamma-1}$
        such that $\frac{1}{\kappa_1}+
        \frac{1}{\kappa_2}=1$.
        Owing to \eqref{eq_PMM:the_esti_f(t,x)-f(s,x)},
        one easily gets
        \begin{equation}
        \begin{split}
        \|
          f
        \big(
          s,
         X_{s}^{-k\tau}
          \big)
          -
          f
         \big(
          jh,
         X_{s}^{-k\tau}
          \big)
         \|_{L^2(\Omega;\R^d)} 
         \leq
         C h
        \Big(
        1+\sup_{k\in \mathbb{N}} \sup_{t \geq -k\tau}
        \big\| 
        X_{t}^{-k\tau} 
        \big\|
        ^{\gamma}_{L^{2\gamma}(\Omega;\mathbb{\R}^d)}
        \Big).
        \end{split}
        \end{equation}
        Hence,
        \begin{equation}
            \mathbb{I}_6
            \leq
            C h^2
        \Big(
        1+\sup_{k\in \mathbb{N}} \sup_{t \geq -k\tau}
        \big\| 
        X_{t}^{-k\tau} 
        \big\|
        ^{5\gamma}_{L^{10\gamma}(\Omega;\mathbb{\R}^d)}
        \Big).
        \end{equation}
    Thanks to \eqref{lem_PMM:x-Phi(x)},
    one can easily get
        \begin{equation}
            \mathbb{I}_7
            \leq
            Ch^2
            \|
       X_{-k\tau+jh}
       ^{-k\tau}
       \|
       ^{4\gamma+1}
       _{L^{8\gamma+2}(\Omega;\R^d)}.
        \end{equation}
        Therefore,
        from
        \eqref{eq_PMM:R_j+1|F}
        it immediately follows
        that
        \begin{equation}
        \big\|
        \E[
        \mathcal{R}_{-k\tau+(j+1)h}
        | \mathcal{F}
        _{-k\tau+jh}
        ]\big\|_{L^2(\Omega;\R^d)}
        \leq
        C h^2
        \Big(
        1+\sup_{k\in \mathbb{N}} \sup_{t \geq -k\tau}
        \big\| 
        X_{t}^{-k\tau} 
        \big\|
        ^{5\gamma}
        _{L^{10\gamma}(\Omega;\mathbb{\R}^d)}
        \Big).
        \end{equation}
    This thus finishes the proof of the lemma.
\end{proof}

\subsection{The order-one convergence of PMM}
We are now prepared to present the main result of this section,
which demonstrates the order-one
convergence of the explicit Milstein scheme for the SDE \eqref{eq_PMM:Problem_SDE}
in the infinite time horizon.
\begin{thm}
    \label{thm_PMM:error analysis}
    Let Assumptions \ref{ass_PMM}, \ref{ass_PMM：polynomial_growth} be fulfilled with $\gamma \in \big[1,\frac{p^*}{5}\big]$.
    Let  $X_{-k\tau+jh}^{-k\tau}$ and 
    $\tilde{X}_{-k\tau+jh}^{-k\tau}$ 
    be given by \eqref{eq_PMM:Problem_SDE} and \eqref{eq:the_projected_milstein_method}, respectively.
    For an arbitrary pair 
    $(\sigma_1,\sigma_2)$
    satisfying $\sigma_1
    \in (0,\lambda_1
    -K_2
    -2\beta_{\mathcal{L}}^2)$
    and $\sigma_2>0$,
    there exists a positive constant $C$,
    independent of $k,j 
    \in \N$,
    such that
    \begin{equation}
        \sup_{k,j\in N}
        \E
        [\|
        X_{-k\tau+jh}^{-k\tau}
        -
        \tilde{X}_{-k\tau+jh}^{-k\tau}
        \|^2]
        \leq
        Ch^2,
    \end{equation}
    where the timestep $h$
    satisfies
    \begin{equation}
         h
        \in
    \bigg(
    0,
    \min
    \bigg\{
    \dfrac{(\lambda_1-K_2)^{\gamma}}{
    (1+\sigma_2)^{\gamma}(\lambda_d+\beta_f)^{2\gamma}},
    \dfrac{1}
    {
    \lambda_1
    -K_2
    -\sigma_1
    -2\beta_{\mathcal{L}}^2
    }
    ,
    1
    \bigg\}
    \bigg).
    \end{equation}
\end{thm}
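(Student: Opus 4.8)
The plan is to derive a one-step perturbed contraction for the mean-square error and then iterate it over the infinite horizon. Write $E_j := X_{-k\tau+jh}^{-k\tau} - \tilde{X}_{-k\tau+jh}^{-k\tau}$ and note $E_0 = 0$, since both processes start from $\xi$. Subtracting the scheme \eqref{eq:the_projected_milstein_method} from the exact decomposition \eqref{eq_PMM:the_exact_X_j+1} gives $E_{j+1} = D_j + M_j + \mathcal{R}_{-k\tau+(j+1)h}$, where, writing $\Delta\Phi := \Phi(X)-\Phi(\tilde{X})$, $\Delta f := f(jh,\Phi(X))-f(jh,\Phi(\tilde{X}))$, $\Delta g := g(jh,\Phi(X))-g(jh,\Phi(\tilde{X}))$ and $\Delta\mathcal{L}^{r_1}g_{r_2} := \mathcal{L}^{r_1}g_{r_2}(jh,\Phi(X))-\mathcal{L}^{r_1}g_{r_2}(jh,\Phi(\tilde{X}))$ (arguments $-k\tau+jh$ suppressed), the part $D_j := \Delta\Phi + Ah\,\Delta\Phi + h\,\Delta f$ is $\mathcal{F}_{-k\tau+jh}$-measurable and $M_j := \Delta g\,\Delta W + \sum_{r_1,r_2}\Delta\mathcal{L}^{r_1}g_{r_2}\,\Pi_{r_1,r_2}$ carries the martingale increments. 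Splitting $\mathcal{R}_{-k\tau+(j+1)h} = \hat{\mathcal{R}} + \bar{\mathcal{R}}$ into its $\mathcal{F}_{-k\tau+jh}$-conditional mean $\hat{\mathcal{R}}$ and the centred part $\bar{\mathcal{R}}$, every product of a predictable factor with a zero-conditional-mean factor vanishes in expectation, leaving $\E\|E_{j+1}\|^2 = \E\|D_j\|^2 + \E\|M_j\|^2 + \E\|\mathcal{R}\|^2 + 2\E\langle D_j, \hat{\mathcal{R}}\rangle + 2\E\langle M_j, \bar{\mathcal{R}}\rangle$.

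The backbone is the contraction hidden in $\E\|D_j\|^2 + \E\|M_j\|^2$. Expanding gives $\|D_j\|^2 = \|\Delta\Phi\|^2 + 2h\langle\Delta\Phi, A\Delta\Phi\rangle + 2h\langle\Delta\Phi,\Delta f\rangle + h^2\|A\Delta\Phi + \Delta f\|^2$ with $\langle\Delta\Phi, A\Delta\Phi\rangle \le -\lambda_1\|\Delta\Phi\|^2$ by \eqref{eq:lambda}, while $\E\|M_j\|^2 = h\,\E\|\Delta g\|^2 + \E\|\sum_{r_1,r_2}\Delta\mathcal{L}^{r_1}g_{r_2}\Pi_{r_1,r_2}\|^2$ after discarding the odd-moment cross term. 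The decisive step is to feed $2h\langle\Delta\Phi,\Delta f\rangle + h\|\Delta g\|^2$ into the monotonicity Assumption \ref{ass_PMM:generalizedz_monotonicity_condition} (applied at $\Phi(X),\Phi(\tilde{X})$, with $q\ge 1$), which bounds it by $2hK_2\|\Delta\Phi\|^2$; combined with the non-expansiveness \eqref{eq:the_esti_Phi(x)-Phi(y)} this produces the base factor $1 - 2(\lambda_1 - K_2)h$. The two leftover quadratic terms are absorbed via Lemma \ref{lem_PMM:Phi(x)}: the Lipschitz bound \eqref{eq:the_esti_f(t,Phi(x))-f(t,Phi(y))} yields $h^2\|A\Delta\Phi+\Delta f\|^2 \le (\lambda_d+\beta_f)^2 h^{1+1/\gamma}\|E_j\|^2$, which the first entry of the stepsize restriction keeps below $\tfrac{\lambda_1-K_2}{1+\sigma_2}h\|E_j\|^2$ (take the $\gamma$-th root of that bound), while \eqref{eq:the_esti_L(t,Phi(x))-L(t,Phi(y))} together with $\E\|\Pi\|^2 \le \tfrac12 h^2$ gives $\E\|\sum_{r_1,r_2}\Delta\mathcal{L}^{r_1}g_{r_2}\Pi_{r_1,r_2}\|^2 \le 2\beta_{\mathcal{L}}^2 h^{1+1/\gamma}\E\|E_j\|^2 \le 2\beta_{\mathcal{L}}^2 h\,\E\|E_j\|^2$ (using $h^{1/\gamma}\le1$), which is exactly the source of the $2\beta_{\mathcal{L}}^2$ appearing in the admissible range of $\sigma_1$.

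For the remainder interaction I would invoke Lemma \ref{lem_PMM:the_esti_R_j+1}. The predictable cross term obeys $2\E\langle D_j,\hat{\mathcal{R}}\rangle \le 2\|D_j\|_{L^2}\|\hat{\mathcal{R}}\|_{L^2}$; since $\|\hat{\mathcal{R}}\|_{L^2} = O(h^2)$ and $\|D_j\|_{L^2} \le (1+o(1))\|E_j\|_{L^2}$, a Young split with weight $\sigma_1 h$ contributes $\sigma_1 h\,\E\|E_j\|^2 + C\sigma_1^{-1}h^3$, which is where $\sigma_1$ is spent; the suprema of moments produced by Lemma \ref{lem_PMM:the_esti_R_j+1} are finite uniformly in $k,j$ by Lemma \ref{lem_PMM:the_pth_of_exact_solution} under $\gamma\le p^*/5$. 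Collecting the order-$h$ coefficients, the net drain is $-2(\lambda_1-K_2) + \tfrac{\lambda_1-K_2}{1+\sigma_2} + 2\beta_{\mathcal{L}}^2 + \sigma_1$, which is strictly negative precisely when $\sigma_1 < \lambda_1 - K_2 - 2\beta_{\mathcal{L}}^2$; one thus arrives at $\E\|E_{j+1}\|^2 \le (1 - 2\delta h)\E\|E_j\|^2 + Ch^3$ with some $\delta>0$, the second stepsize bound ensuring $1-2\delta h\in(0,1)$. Iterating from $E_0 = 0$ and summing the geometric series gives $\sup_{k,j}\E\|E_j\|^2 \le Ch^3 / (2\delta h) = C'h^2$, the claimed order-one rate, uniformly in $k,j$.

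The step I expect to be the genuine obstacle is the martingale cross term $2\E\langle M_j,\bar{\mathcal{R}}\rangle$. A crude $2\|M_j\|_{L^2}\|\bar{\mathcal{R}}\|_{L^2}$ estimate fails once $\gamma>1$: the superlinear growth of $g$ forces a projection blow-up $\|M_j\|_{L^2}\lesssim h^{(\gamma+1)/(4\gamma)}\|E_j\|_{L^2}$, and combined with $\|\bar{\mathcal{R}}\|_{L^2}=O(h^{3/2})$ this injects only $O(h^{2+(\gamma+1)/(2\gamma)})$, too weak for the $O(h^2)$ target. The remedy, and the reason the Milstein correction is present, is to evaluate $\E\langle M_j,\bar{\mathcal{R}}\rangle$ exactly via the It\^o isometry: the iterated-integral term $\sum_{r_1,r_2}\Delta\mathcal{L}^{r_1}g_{r_2}\Pi_{r_1,r_2}$ in $M_j$ resonates with the true stochastic part $\int [g(s,X_s)-g(jh,\Phi(X))]\,\dd W_s$ of $\bar{\mathcal{R}}$ and, using the identity $\tfrac{\partial g_{r_2}}{\partial x}g_{r_1}=\mathcal{L}^{r_1}g_{r_2}$, cancels to leading order against the $-\sum_{r_1,r_2}\mathcal{L}^{r_1}g_{r_2}\Pi_{r_1,r_2}$ already inside $\bar{\mathcal{R}}$, leaving only a higher-order residual compatible with the $O(h^3)$ injection. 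Making this cancellation rigorous, uniformly in $k$ and $j$, is the technical heart of the argument.
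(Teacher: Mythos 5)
Your skeleton matches the paper's: the same error recursion obtained by subtracting \eqref{eq:the_projected_milstein_method} from \eqref{eq_PMM:the_exact_X_j+1}, the same killing of martingale cross terms by $\mathcal{F}_{-k\tau+jh}$-measurability, the same conditional-expectation trick $2\E\langle \Delta X^{\Phi},\mathcal{R}\rangle\leq \sigma_1 h\,\E\|\Delta X^{\Phi}\|^2+\tfrac{1}{\sigma_1 h}\E\|\E[\mathcal{R}\mid\mathcal{F}]\|^2$ exploiting the $O(h^2)$ bound of Lemma \ref{lem_PMM:the_esti_R_j+1}, and the same geometric-series iteration from $e_{-k\tau}=0$. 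The genuine gap is at the step you yourself flag as the "technical heart": the cross term between the diffusion increments and the remainder. You assert that the crude Young/Cauchy--Schwarz estimate fails for $\gamma>1$ and that one must instead exhibit an exact It\^o-isometry cancellation between $\sum_{r_1,r_2}\Delta\mathcal{L}^{r_1}g_{r_2}\,\Pi_{r_1,r_2}$ and the stochastic integral inside $\bar{\mathcal{R}}$ --- but you never carry this out, so the proof is incomplete precisely where you locate its difficulty.

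Moreover, the claimed obstacle is illusory, and the reason is visible in Assumption \ref{ass_PMM:generalizedz_monotonicity_condition}: the coupled monotonicity condition carries the coefficient $\tfrac{2q-1}{2}$ on $\|g(t,x)-g(t,y)\|^2$ with $q>1$, and you only spend $h\,\E\|\Delta g^{\Phi}\|^2$ of that absorption capacity (i.e.\ the $q=1$ share, coming from the quadratic variation of $\Delta g^{\Phi}\Delta W$). The paper spends the rest on exactly this cross term: by Young,
\begin{equation*}
2\,\E\big[\langle \Delta g^{\Phi}\Delta W,\mathcal{R}\rangle\big]\;\leq\;(2q-2)\,h\,\E\big[\|\Delta g^{\Phi}\|^2\big]+\tfrac{1}{2q-2}\,\E\big[\|\mathcal{R}\|^2\big],
\end{equation*}
so the total coefficient of $\E\|\Delta g^{\Phi}\|^2$ becomes $(2q-1)h$, which together with $2h\langle\Delta X^{\Phi},\Delta f^{\Phi}\rangle$ is absorbed verbatim by \eqref{eq_PMM:generalizedz_monotonicity_condition} at the projected points, yielding the $2K_2h\|\Delta X^{\Phi}\|^2$ contribution; the cost $\tfrac{1}{2q-2}\E\|\mathcal{R}\|^2=O(h^3)$ is exactly the admissible per-step injection, since $\|\mathcal{R}\|_{L^2}=O(h^{3/2})$. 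The iterated-integral part is treated identically, $2\E\langle\sum\Delta(\mathcal{L}^{r_1}g_{r_2})^{\Phi}\Pi_{r_1,r_2},\mathcal{R}\rangle\leq h^2\sum\E\|\Delta(\mathcal{L}^{r_1}g_{r_2})^{\Phi}\|^2+\E\|\mathcal{R}\|^2$, with the first piece bounded by $2\beta_{\mathcal{L}}^2h\,\E\|e_{-k\tau+jh}\|^2$ via \eqref{eq:the_esti_L(t,Phi(x))-L(t,Phi(y))} --- which is the actual source of the $2\beta_{\mathcal{L}}^2$ in the range of $\sigma_1$. No resonance or leading-order cancellation is needed anywhere; what is needed is to notice that the $q>1$ slack in the monotonicity condition exists precisely to pay for this term. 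As written, your argument at this step is both unexecuted and aimed at the wrong mechanism.
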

\begin{proof}
    [Proof of Theorem \ref{thm_PMM:error analysis}
    ]
    For brevity,
    for $j,k \in \N$
    we denote
    \begin{equation}
    \label{eq_PMM:the_short-hand_notation}
        \begin{split}
            e_{-k\tau+jh}
            &:=
            X_{-k\tau+jh}^{-k\tau}
            -
            \tilde{X}_{-k\tau+jh}^{-k\tau},\\
            \Delta X^{\Phi}_{-k\tau+jh}
            &:=
            \Phi
            \big(
            X_{-k\tau+jh}^{-k\tau}
            \big)
            -
            \Phi
            \big(
            \tilde{X}_{-k\tau+jh}^{-k\tau}
            \big),\\
            \Delta f^{\Phi}_{-k\tau+jh}
            &:=
            f
            \Big(jh,
            \Phi
            \big(
            X_{-k\tau+jh}^{-k\tau}
            \big)
            \Big)
            -
            f
            \Big(
            jh,
            \Phi
            \big(
            \tilde{X}_{-k\tau+jh}^{-k\tau}
            \big)
            \Big),\\
            \Delta g^{\Phi}_{-k\tau+jh}
            &:=
            g
            \Big(jh,
            \Phi
            \big(
            X_{-k\tau+jh}^{-k\tau}
            \big)
            \Big)
            -
            g
            \Big(
            jh,
            \Phi
            \big(
            \tilde{X}_{-k\tau+jh}^{-k\tau}
            \big)
            \Big),\\
            \Delta \big( \mathcal{L}^{r_1}g_{r_2}\big)
            ^{\Phi}_{-k\tau+jh}
            &:=
            \mathcal{L}^{r_1}g_{r_2}
            \Big(jh,
            \Phi
            \big(
            X_{-k\tau+jh}^{-k\tau}
            \big)
            \Big)
            -
            \mathcal{L}^{r_1}g_{r_2}
            \Big(
            jh,
            \Phi
            \big(
            \tilde{X}_{-k\tau+jh}^{-k\tau}
            \big)
            \Big).\\
        \end{split}
    \end{equation}
    Using the short-hand notation \eqref{eq_PMM:the_short-hand_notation} and
    subtracting \eqref{eq:the_projected_milstein_method}
     from \eqref{eq_PMM:the_exact_X_j+1} gives
     \begin{equation}
     \label{eq_PMM:e_j+1}
        \begin{split}
        e_{-k\tau+(j+1)h}
        & =
        \Delta X^{\Phi}_{-k\tau+jh}
        +
        h A
        \Delta X^{\Phi}_{-k\tau+jh}
        +
        h
        \Delta f^{\Phi}_{-k\tau+jh}
        +
        \Delta g^{\Phi}_{-k\tau+jh}
        \Delta W_{-k\tau+jh}\\
        & \quad +
        \sum_{r_1,r_2=1}^{m}
        \Delta 
        \big( \mathcal{L}^{r_1}g_{r_2}
        \big)
        ^{\Phi}_{-k\tau+jh}
        \Pi^{-k\tau+jh,-k\tau+(j+1)h}_{r_1,r_2}
        +
        \mathcal{R}_{-k\tau+(j+1)h}.
        \end{split}
     \end{equation}
     Squaring both sides of \eqref{eq_PMM:e_j+1}
     yields
     \begin{equation}
     \label{eq_PMM:|e_j+1|^2}
        \begin{split}
        \|e_{-k\tau+(j+1)h}\|^2
        & =
        \|
        \Delta X^{\Phi}_{-k\tau+jh}
        \|^2
        +
        h^2
        \|
        A 
        \Delta X^{\Phi}_{-k\tau+jh}
        +
        \Delta f^{\Phi}_{-k\tau+jh}
        \|^2
        +
        \|
        \Delta g^{\Phi}_{-k\tau+jh}
        \Delta W_{-k\tau+jh}
        \|^2 \\
        & \quad +
        \Big\|
        \sum_{r_1,r_2=1}^{m}
        \Delta 
        \big( \mathcal{L}^{r_1}g_{r_2}
        \big)
        ^{\Phi}_{-k\tau+jh}
        \Pi^{-k\tau+jh,-k\tau+(j+1)h}_{r_1,r_2}
        \Big\|^2
        +
        \|
        \mathcal{R}_{-k\tau+(j+1)h}
        \|^2 \\
        & \quad +
        2h
        \big\langle
        \Delta X^{\Phi}_{-k\tau+jh}
        ,
        A 
        \Delta X^{\Phi}_{-k\tau+jh}
        +
        \Delta f^{\Phi}_{-k\tau+jh}
        \big\rangle
        +
        2h
        \big\langle
        \Delta X^{\Phi}_{-k\tau+jh}
        ,
        \Delta g^{\Phi}_{-k\tau+jh}
        \Delta W_{-k\tau+jh}
        \big\rangle \\
        & \quad +
        2 \bigg\langle
        \Delta X^{\Phi}_{-k\tau+jh}
        ,
        \sum_{r_1,r_2=1}^{m}
        \Delta 
        \big( \mathcal{L}^{r_1}g_{r_2}
        \big)
        ^{\Phi}_{-k\tau+jh}
        \Pi^{-k\tau+jh,-k\tau+(j+1)h}_{r_1,r_2}
        \bigg\rangle
        +
        2 \big\langle
        \Delta X^{\Phi}_{-k\tau+jh}
        ,
        \mathcal{R}_{-k\tau+(j+1)h}
        \big\rangle \\
        & \quad +
        2h \big\langle
        A 
        \Delta X^{\Phi}_{-k\tau+jh}
        +
        \Delta f^{\Phi}_{-k\tau+jh}
        ,
        \Delta g^{\Phi}_{-k\tau+jh}
        \Delta W_{-k\tau+jh}
        \big\rangle \\
        & \quad +
        2h
        \Big\langle
        A 
        \Delta X^{\Phi}_{-k\tau+jh}
        +
        \Delta f^{\Phi}_{-k\tau+jh}
        ,
        \sum_{r_1,r_2=1}^{m}
        \Delta 
        \big( \mathcal{L}^{r_1}g_{r_2}
        \big)
        ^{\Phi}_{-k\tau+jh}
        \Pi^{-k\tau+jh,-k\tau+(j+1)h}_{r_1,r_2}
        \Big\rangle \\
        & \quad +
        2h
        \big\langle
        A 
        \Delta X^{\Phi}_{-k\tau+jh}
        +
        \Delta f^{\Phi}_{-k\tau+jh}
        ,
        \mathcal{R}_{-k\tau+(j+1)h}
        \big\rangle \\
        & \quad +
        2 \bigg\langle
        \Delta g^{\Phi}_{-k\tau+jh}
        \Delta W_{-k\tau+jh}
        ,
        \sum_{r_1,r_2=1}^{m}
        \Delta 
        \big( \mathcal{L}^{r_1}g_{r_2}
        \big)
        ^{\Phi}_{-k\tau+jh}
        \Pi^{-k\tau+jh,-k\tau+(j+1)h}_{r_1,r_2}
        \bigg\rangle \\
        & \quad +
        2 \big\langle
        \Delta g^{\Phi}_{-k\tau+jh}
        \Delta W_{-k\tau+jh}
        ,
        \mathcal{R}_{-k\tau+(j+1)h}
        \big\rangle \\
        & \quad +
        2 \bigg\langle
        \sum_{r_1,r_2=1}^{m}
        \Delta 
        \big( \mathcal{L}^{r_1}g_{r_2}
        \big)
        ^{\Phi}_{-k\tau+jh},
        \Pi^{-k\tau+jh,-k\tau+(j+1)h}_{r_1,r_2},
        \mathcal{R}_{-k\tau+(j+1)h}
        \bigg\rangle. \\
        \end{split}
     \end{equation}
     It is easy to see
     \begin{equation}
         \begin{split}
         \E
         \big[
         \|
         \Delta g^{\Phi}_{-k\tau+jh}
         \Delta W_{-k\tau+jh}
         \|^2
         \big]
         =
          h
         \E
         \big[
         \|
         \Delta g^{\Phi}_{-k\tau+jh}
         \|^2
         \big].
\end{split}
\end{equation}
Also, we claim
\begin{equation}
\begin{split}
         \E
         \Big[
         \Big\|
        \sum_{r_1,r_2=1}^{m}
        \Delta 
        \big( \mathcal{L}^{r_1}g_{r_2}
        \big)
        ^{\Phi}_{-k\tau+jh}
        \Pi^{-k\tau+jh,-k\tau+(j+1)h}_{r_1,r_2}
        \Big\|^2
         \Big]
        \leq
         h^2
         \sum_{r_1,r_2=1}^{m}
         \E
         \Big[
         \big\|
         \Delta 
         \big( \mathcal{L}^{r_1}g_{r_2}
         \big)
         ^{\Phi}_{-k\tau+jh}
         \big\|^2
         \Big].
         \end{split}
     \end{equation}
Indeed, in the case $\{r_1,r_2\} \neq 
     \{r_3,r_4\}$,
     one can infer that
     \begin{equation}
         \E
         \Big[
         \Big\langle
         \sum_{r_1,r_2=1}^m
         \Delta 
        \Big( \mathcal{L}^{r_1}g_{r_2}
        \big)
        ^{\Phi}_{-k\tau+jh}
        \Pi^{-k\tau+jh,-k\tau+(j+1)h}_{r_1,r_2}
        ,
        \sum_{r_3,r_4=1}^m
         \Delta 
        \Big( \mathcal{L}^{r_3}g_{r_4}
        \big)
        ^{\Phi}_{-k\tau+jh}
        \Pi^{-k\tau+jh,-k\tau+(j+1)h}_{r_3,r_4}
         \Big\rangle
         \Big]
         =
         0.
     \end{equation}
 As a consequence,
     \begin{equation}
     \begin{split}
         &\E
         \Big[
         \Big\|
        \sum_{r_1,r_2=1}^{m}
        \Delta 
        \big( \mathcal{L}^{r_1}g_{r_2}
        \big)
        ^{\Phi}_{-k\tau+jh}
        \Pi^{-k\tau+jh,-k\tau+(j+1)h}_{r_1,r_2}
        \Big\|^2
        \Big]
        \\
        & \quad =
        \sum_{r_1,r_2=1}^m
        \E
         \Big[
         \Big\|
        \Delta 
        \big( \mathcal{L}^{r_1}g_{r_2}
        \big)
        ^{\Phi}_{-k\tau+jh}
        \Pi^{-k\tau+jh,-k\tau+(j+1)h}_{r_1,r_2}
        \Big\|^2
        \Big] \\
        & \qquad +
        \sum
        _{r_1,r_2=1,r_1 \neq r_2}^m
        \E
         \Big[
         \Big
         \langle
        \Delta 
        \big( \mathcal{L}^{r_1}g_{r_2}
        \big)
        ^{\Phi}_{-k\tau+jh}
        \Pi^{-k\tau+jh,-k\tau+(j+1)h}_{r_1,r_2}
        ,
        \Delta 
        \big( \mathcal{L}^{r_2}g_{r_1}
        \big)
        ^{\Phi}_{-k\tau+jh}
        \Pi^{-k\tau+jh,-k\tau+(j+1)h}_{r_2,r_1}
        \Big
        \rangle
        \Big] \\
        & \quad \leq
        \sum_{r_1,r_2=1}^m
        \E
         \Big[
         \Big\|
        \Delta 
        \big( \mathcal{L}^{r_1}g_{r_2}
        \big)
        ^{\Phi}_{-k\tau+jh}
        \Pi^{-k\tau+jh,-k\tau+(j+1)h}_{r_1,r_2}
        \Big\|^2
        \Big] \\
        & \qquad +
        \tfrac{1}{2}
        \sum_{r_1,r_2=1}^m
        \E
         \Big[
         \Big\|
        \Delta 
        \big( \mathcal{L}^{r_1}g_{r_2}
        \big)
        ^{\Phi}_{-k\tau+jh}
        \Pi^{-k\tau+jh,-k\tau+(j+1)h}_{r_1,r_2}
        \Big\|^2
        \Big] \\
        & \qquad +
        \tfrac{1}{2}
        \sum_{r_2,r_1=1}^m
        \E
         \Big[
         \Big\|
        \Delta 
        \big( \mathcal{L}^{r_2}g_{r_1}
        \big)
        ^{\Phi}_{-k\tau+jh}
        \Pi^{-k\tau+jh,-k\tau+(j+1)h}_{r_2,r_1}
        \Big\|^2
        \Big] \\
        & \quad \leq
          h^2
         \sum_{r_1,r_2=1}^{m}
         \E
         \Big[
         \big\|
         \Delta 
         \big( \mathcal{L}^{r_1}g_{r_2}
         \big)
         ^{\Phi}_{-k\tau+jh}
         \big\|^2
         \Big],
    \end{split}
    \end{equation}
validating the above claim.
    Furthermore,
     \begin{equation}
         \begin{split}
         \E
         \Big[
         \big\langle
        \Delta X^{\Phi}_{-k\tau+jh}
        ,
        \Delta g^{\Phi}_{-k\tau+jh}
        \Delta W_{-k\tau+jh}
        \big\rangle
         \Big]
         & =0,\\
         \E
         \Big[
         \Big\langle
        \Delta X^{\Phi}_{-k\tau+jh}
        ,
        \sum_{r_1,r_2=1}^{m}
        \Delta 
        \big( \mathcal{L}^{r_1}g_{r_2}
        \big)
        ^{\Phi}_{-k\tau+jh}
        \Pi^{-k\tau+jh,-k\tau+(j+1)h}_{r_1,r_2}
        \Big\rangle
         \Big]
         & =0,\\
         \E
         \Big[
         \big\langle
         A 
        \Delta X^{\Phi}_{-k\tau+jh}
        +
        \Delta f^{\Phi}_{-k\tau+jh}
        ,
        \Delta g^{\Phi}_{-k\tau+jh}
        \Delta W_{-k\tau+jh}
        \big\rangle 
         \Big]
         & =0,\\
         \E
         \Big[
         \Big\langle
        A 
        \Delta X^{\Phi}_{-k\tau+jh}
        +
        \Delta f^{\Phi}_{-k\tau+jh}
        ,
        \sum_{r_1,r_2=1}^{m}
        \Delta 
        \big( \mathcal{L}^{r_1}g_{r_2}
        \big)
        ^{\Phi}_{-k\tau+jh}
        \Pi^{-k\tau+jh,-k\tau+(j+1)h}_{r_1,r_2}
        \Big\rangle
         \Big]
         & =0,\\
         \E
         \Big[
         \Big\langle
        \Delta g^{\Phi}_{-k\tau+jh}
        \Delta W_{-k\tau+jh}
        ,
        \sum_{r_1,r_2=1}^{m}
        \Delta 
        \big( \mathcal{L}^{r_1}g_{r_2}
        \big)
        ^{\Phi}_{-k\tau+jh}
        \Pi^{-k\tau+jh,-k\tau+(j+1)h}_{r_1,r_2}
        \Big\rangle
         \Big]
         & =0,
         \end{split}
     \end{equation}
     where the fact was used that the terms $\Delta X^{\Phi}_{-k\tau+jh}$,
     $\Delta f^{\Phi}_{-k\tau+jh}$, 
     $\Delta g^{\Phi}_{-k\tau+jh}$ and 
     $\Delta 
        \big( \mathcal{L}^{r_1}g_{r_2}
        \big)
        ^{\Phi}_{-k\tau+jh}$
     is $\mathcal{F}_{-k\tau+jh}$-measurable.\\
    Equipped with these estimates and taking expectations on both sides of \eqref{eq_PMM:|e_j+1|^2},
    one can then derive
    \begin{equation}
    \begin{split}
        \E
        [
        \|e_{-k\tau+(j+1)h}
        \|^2
        ]
        &\leq
        \E[
        \|\Delta X^{\Phi}_{-k\tau+jh}\|^2
        ]
        +
        h^2
        \E
        [\|A\Delta X^{\Phi}_{-k\tau+jh}
        +
        \Delta f^{\Phi}_{-k\tau+jh}
        \|^2
        ]
        +
        h
        \E
        [\|\Delta g^{\Phi}_{-k\tau+jh}\|^2] \\
        & \quad +
        \tfrac{h^2}{2}
        \sum_{r_1,r_2=1}^{m}
        \E
        \Big[
        \big\|
        \Delta 
        (\mathcal{L}^{r_1}g_{r_2})
        ^{\Phi}_{-k\tau+jh}
        \big\|
        \Big]
        +
        \E
        [\|
        \mathcal{R}_{-k\tau+(j+1)h}
        \|^2] \\
        & \quad +
        2h
        \E
        \Big[
        \big\langle
        \Delta X^{\Phi}_{-k\tau+jh},
        A\Delta X^{\Phi}_{-k\tau+jh}
        +
        \Delta f^{\Phi}_{-k\tau+jh}
        \big\rangle
        \Big]
        +
        2\E
        \Big[
        \big\langle
        \Delta X^{\Phi}_{-k\tau+jh},
        \mathcal{R}_{-k\tau+(j+1)h}
        \big\rangle
        \Big] \\
        & \quad +
        2h\E
        \Big[
        \big\langle
        A\Delta X^{\Phi}_{-k\tau+jh}
        +
        \Delta f^{\Phi}_{-k\tau+jh},
        \mathcal{R}_{-k\tau+(j+1)h}
        \big\rangle
        \Big]\\
        & \quad +
        2 \E
        \Big[
        \big\langle
        \Delta g^{\Phi}_{-k\tau+jh}
        \Delta W_{-k\tau+jh},
        \mathcal{R}_{-k\tau+(j+1)h}
        \big\rangle
        \Big] \\
        & \quad +
        2 \E
        \Big[
        \Big\langle
         \sum_{r_1,r_2=1}^{m}
        \Delta 
        \big( \mathcal{L}^{r_1}g_{r_2}
        \big)
        ^{\Phi}_{-k\tau+jh}
        \Pi^{-k\tau+jh,-k\tau+(j+1)h}_{r_1,r_2},
        \mathcal{R}_{-k\tau+(j+1)h}
        \Big\rangle
        \Big] .
    \end{split}
    \end{equation}
    Recalling $\Delta X^{\Phi}_{-k\tau+jh}$ is $\mathcal{F}_{-k\tau+jh}$-measurable and
    using the Cauchy-Schwartz inequality $2ab\leq\sigma_1ha^2+\frac{1}{\sigma_1h}b^2$ with
    $\sigma_1 \in (0,\lambda_1-K_2
    -2\beta_{\mathcal{L}}^2
    )$,
    one can get
    \begin{equation}
    \begin{split}
        2\E
        \Big[
        \big\langle
        \Delta X^{\Phi}_{-k\tau+jh},
        \mathcal{R}_{-k\tau+(j+1)h}
        \big\rangle
        \Big]
        & =
        2
        \E
        \Big[\E
        \big\langle
        \Delta X^{\Phi}_{-k\tau+(j+1)h},
        \mathcal{R}
        _{-k\tau+(j+1)h}
        \big\rangle
        \big| \mathcal{F}_{-k\tau+jh}
        \Big] \\
        & =
        2
        \E
        \Big[
        \big\langle
        \Delta X^{\Phi}_{-k\tau+jh},
        \E\big[
        \mathcal{R}
        _{-k\tau+(j+1)h}
        \big| \mathcal{F}_{-k\tau+jh}
        \big]
        \big\rangle
        \Big] \\
        & \leq
        \sigma_1 h
        \E[\|
        \Delta X^{\Phi}_{-k\tau+jh}
        \|^2]
        +
        \tfrac{1}{\sigma_1 h}
        \E\big[
        \E\big[
        \|
        \mathcal{R}
        _{-k\tau+(j+1)h}
        \big| \mathcal{F}_{-k\tau+jh}
        \|^2
        \big]
        \big].
    \end{split}
    \end{equation}
    Likewise, for a positive $\sigma_2$,
    using the Young inequality gives,
    \begin{equation}
    \begin{split}
        &
        2h\E
        \Big[
        \big\langle
        A\Delta X^{\Phi}_{-k\tau+jh}
        +
        \Delta f^{\Phi}_{-k\tau+jh},
        \mathcal{R}
        _{-k\tau+(j+1)h}
        \big\rangle
        \Big] \\
        & 
        \quad \leq
        \sigma_2 
        h^2 
        \E
        [\|
        A\Delta X^{\Phi}_{-k\tau+jh}
        +
        \Delta 
        f^{\Phi}_{-k\tau+jh}
        \|^2]
        +
        \tfrac{1}{\sigma_2}
        \E
        [
        \| \mathcal{R}
        _{-k\tau+(j+1)h} 
        \|^2
        ].
    \end{split}
    \end{equation}
    Similarly, for $q \geq 1 $ 
    coming from Assumption \ref{ass_PMM:generalizedz_monotonicity_condition},
    \begin{equation}
    \begin{split}
        2\E
        \Big[
        \big\langle
        \Delta g^{\Phi}
        _{-k\tau+jh}
        \Delta W
        _{-k\tau+jh},
        \mathcal{R}
        _{-k\tau+(j+1)h}
        \big\rangle
        \Big] 
        \leq
        (2q-2)h
        \E
        [\|
        \Delta 
        g^{\Phi}
        _{-k\tau+jh}\|^2]
        +
        \tfrac{1}{2q-2}
        \E
        [
        \| 
        \mathcal{R}
        _{-k\tau+(j+1)h} 
        \|
        ^2],
    \end{split}
    \end{equation}
    \begin{equation}
    \begin{split}
        &2 \E
        \Big[
        \Big\langle
         \sum_{r_1,r_2=1}^{m}
        \Delta 
        \big( \mathcal{L}^{r_1}g_{r_2}
        \big)
        ^{\Phi}
        _{-k\tau+jh}
        \Pi^{-k\tau+jh,-k\tau+(j+)h}
        _{r_1,r_2},
        \mathcal{R}
        _{-k\tau+(j+1)h}
        \Big\rangle
        \Big] \\
        & \quad \leq
        h^2
        \sum_{r_1,r_2=1}^{m}
        \E\Big[
        \big\|
        \Delta 
        \big( \mathcal{L}^{r_1}g_{r_2}
        \big)
        ^{\Phi}
        _{-k\tau+jh}
        \big\|^2
        \Big]
        +
        \E
        [\| \mathcal{R}
        _{-k\tau+(j+1)h} \|^2].
    \end{split}
    \end{equation}
    Taking these estimates into consideration, 
    one immediately arrives at
    \begin{equation}
    \label{eq_PMM:the_esti_E[|e_j+1|^2]}
    \begin{split}
        \E
        [
        \|e_{-k\tau+(j+1)h}
        \|^2
        ]
        &\leq
        (1+\sigma_1 h)
        \E[
        \|\Delta X^{\Phi}_{-k\tau+jh}\|^2
        ]
        +
        (1+\sigma_2)
        h^2
        \E
        [\|A\Delta X^{\Phi}_{-k\tau+jh} +
        \Delta f^{\Phi}_{-k\tau+jh}
        \|^2
        ]\\
        & \quad +
        2h
        \E
        \Big[
        \big\langle
        \Delta X^{\Phi}_{-k\tau+jh},
        A\Delta X^{\Phi}_{-k\tau+jh}
        +
        \Delta f^{\Phi}_{-k\tau+jh}
        \big\rangle
        \Big] \\
        & \quad +
        (2q-1)h
        \E
        [\|\Delta g^{\Phi}_{-k\tau+jh}\|^2]
        +
        2
        h^2
        \sum_{r_1,r_2=1}^{m}
        \E
        \Big[
        \big\|
        \Delta 
        (\mathcal{L}^{r_1}g_{r_2})
        ^{\Phi}_{-k\tau+jh}
        \big\|^2
        \Big] \\
        & \quad +
        (2
        +\tfrac{1}{\sigma_2}
        +\tfrac{1}{2p-2}
        )
        \E
        [\|
        \mathcal{R}
        _{-k\tau+(j+1)h}
        \|^2
        ] \\
        & \quad +
        \tfrac{1}{\sigma_1 h}
        \E\big[
        \E\big[
        \|
        \mathcal{R}_{-k\tau+jh}
        \big| 
        \mathcal{F}
        _{-k\tau+(j+1)h}
        \|^2
        \big]
        \big].
    \end{split}
    \end{equation}
    Additionally, applying Lemma \ref{lem_PMM:Phi(x)} gives
    \begin{equation}
    \label{eq_PMM:E[AX+f]}
    \begin{split}
        &h^2
        \E
        [\|A\Delta X^{\Phi}_{-k\tau+jh} +
        \Delta f^{\Phi}_{-k\tau+jh}
        \|^2
        ] \\
        & \leq
        \lambda_d^2 h^2
        \E[\|
        \Delta X^{\Phi}_{-k\tau+jh}\|^2]
        +2\lambda_d \beta_f
        h^{1+\frac{1}{2\gamma}}
        \E[\|
        \Delta X^{\Phi}_{-k\tau+jh}\|^2]
        +
        \beta_f^2
        h^{1+\frac{1}{\gamma}}
        \E[\|
        \Delta X^{\Phi}_{-k\tau+jh}\|^2] \\
        & \leq
        \Big(
        (\lambda_d
        +\beta_f)^2
        h^{\frac{1}{\gamma}}
        \Big)h
        \E[\|
        \Delta X^{\Phi}_{-k\tau+jh}\|^2],
    \end{split}
    \end{equation}
    \begin{equation}
    \label{eq_PMM:the_esti_h^2E|Delta_L|^2}
        2h^2
        \sum_{r_1,r_2=1}^{m}
        \E
        \Big[
        \big\|
        \Delta 
        (\mathcal{L}^{r_1}g_{r_2})
        ^{\Phi}_{-k\tau+jh}
        \big\|^2
        \Big] 
        \leq
        2
        \beta_{\mathcal{L}}^2
        h^{1+\frac{1}{\gamma}}
        \E
        [
        \|
        \Delta X^{\Phi}_{-k\tau+jh}
        \|^2
        ]
        \leq
        2\beta_{\mathcal{L}}^2
        h
        \E
        [
        \|
        \Delta X^{\Phi}_{-k\tau+jh}
        \|^2
        ].
    \end{equation}
    Here we select an appropriate $h$ such that
    \begin{equation}
         h
        \in
    \Big(
    0,
    \min
    \Big\{
    \tfrac{(\lambda_1-K_2)^{\gamma}}{(1+\sigma_2)^{\gamma}(\lambda_d+\beta_f)^{2\gamma}},
    \tfrac{1}
    {
    \lambda_1
    -K_2
    -\sigma_1
    -
    2\beta_{\mathcal{L}}^2
    }
    ,
    1
    \Big\}
    \Big)
    \end{equation}
    to ensure
    \begin{equation}
        (1+\sigma_2)
        (\lambda_d
        +\beta_f)^2
        h^{\frac{1}{\gamma}}
        <
        \lambda_1-K_2,
        \quad
        1-
        (
        \lambda_1
        -K_2
        -\sigma_1
        -
        2\beta_{\mathcal{L}}^2
        )
        h>0.
    \end{equation}
    Inserting this into \eqref{eq_PMM:the_esti_E[|e_j+1|^2]} and recalling Assumption \ref{ass_PMM} and \ref{ass_PMM:generalizedz_monotonicity_condition} yield
    \begin{equation}
    \begin{split}
        \E
        [
        \|e_{-k\tau+(j+1)h}
        \|^2
        ]
        & =
        \big\{
        1-
        (
        \lambda_1
        -K_2
        -\sigma_1
        -
        2\beta_{\mathcal{L}}^2
        )
        h
        \big\}
        \E
        [
        \|e_{-k\tau+jh}
        \|^2
        ]\\
        & \quad +
        \big(
        2
        +\tfrac{1}{\sigma_2}
        +\tfrac{1}{2p-2}
        \big)
        \E
        [\|
        \mathcal{R}
        _{-k\tau+(j+1)h}
        \|^2] 
        +
        \tfrac{1}{\sigma_1 h}
        \E\big[
        \E\big[
        \|
        \mathcal{R}
        _{-k\tau+(j+1)h}
        \big| 
        \mathcal{F}
        _{-k\tau+jh}
        \|^2
        \big]
        \big].\\
    \end{split}
    \end{equation}
    Denoting 
    $C_A:=
    \lambda_1-K_2
    -\sigma_1
    -
    2\beta_{\mathcal{L}}^2$,
    and recalling Lemma \ref{lem_PMM:the_esti_R_j+1}, we have
    \begin{equation}
    \begin{split}
        \E
        [
        \|e_{-k\tau+(j+1)h}
        \|^2
        ]
        &\leq
        (1-C_A h)
        \E
        [
        \|e_{-k\tau+jh}
        \|^2
        ]
        +Ch^3 \\
        & \leq
        (1-C_A h)^{j+1}
        \E
        [
        \|e_{-k\tau}
        \|^2
        ]
        +
        Ch^3
        \sum_{i=0}^j
        (1-C_A h)^i\\
        & =
        (1-C_A h)^{j+1}
        \E
        [
        \|e_{-k\tau}
        \|^2
        ]
        +
        Ch^3 \times
        \tfrac{1-(1-C_A h)^{j}}{C_A h}.
    \end{split}
    \end{equation}
    By the observation of
    $e_{-k\tau}=0$, we finally obtain
    \begin{equation}
        \E
        [
        \|e_{-k\tau+(j+1)h}
        \|^2
        ]
        \leq
        Ch^2,
    \end{equation}
     as required.
\end{proof}

\section{The Random Periodic Solution of the projected Milstein scheme}
In this section,
we focus on the  the existence and uniqueness of the random periodic solution of numerical solutions.
The next corollary confirms a uniform bound for the second moment of the numerical solution.
\begin{cor}
    \label{cor_PMM:second moment of PMM}
    Let Assumption \ref{ass_PMM}
    hold and let $
    \big\{
    {\Tilde{X}}
    _{-k\tau+(j+1)h}
    ^{-k\tau}
    \big\}_{k,j \in \N}$
    be given by \eqref{eq:the_projected_milstein_method}.
    Then 
    \begin{equation}
   \label{eq_PMM:second moment of PEM}
        \sup_{k,j \in \mathbb{N}}
        \mathbb{E}
        \Big[
        \big\|
        {\Tilde{X}}
        _{-k\tau+(j+1)h}
        ^{-k\tau}
        \big\|
        ^{2}
        \Big]
        <
        \infty.
   \end{equation}
\end{cor}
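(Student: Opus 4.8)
The plan is to derive a single contractive one-step recursion of the form $\E[\|\tilde{X}_{-k\tau+(j+1)h}^{-k\tau}\|^2] \le (1-2(\lambda_1-K_1)h)\E[\|\tilde{X}_{-k\tau+jh}^{-k\tau}\|^2] + \tilde{C}h$, with the contraction factor strictly inside $(0,1)$, and then to iterate it in $j$ so that the resulting geometric series is summable uniformly in both $k$ and $j$. Because the coercivity furnishes genuine dissipativity, this yields a bound that does not degrade as $j\to\infty$, which is exactly what the infinite-horizon setting requires.

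First I would square the defining identity \eqref{eq:the_projected_milstein_method} and expand $\|\tilde{X}_{-k\tau+(j+1)h}^{-k\tau}\|^2$ into the squared norms of its five building blocks ($\Phi(\tilde{X})$, the drift contributions $Ah\Phi$ and $hf$, the diffusion increment $g\Delta W$, and the Milstein correction $\sum_{r_1,r_2}\mathcal{L}^{r_1}g_{r_2}\,\Pi_{r_1,r_2}$) together with all mixed inner products. Taking conditional expectation with respect to $\mathcal{F}_{-k\tau+jh}$ and using that $\tilde{X}_{-k\tau+jh}^{-k\tau}$, hence $\Phi(\tilde{X})$, $f(\cdot,\Phi(\tilde{X}))$, $g(\cdot,\Phi(\tilde{X}))$ and $\mathcal{L}^{r_1}g_{r_2}(\cdot,\Phi(\tilde{X}))$, is $\mathcal{F}_{-k\tau+jh}$-measurable, every cross term carrying a single factor $\Delta W_{-k\tau+jh}$ or a single iterated integral $\Pi_{r_1,r_2}$ vanishes, since $\E[\Delta W\mid\mathcal{F}]=0$ and $\E[\Pi_{r_1,r_2}\mid\mathcal{F}]=0$. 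For the surviving quadratic contributions I would record $\E[\|\Delta W_r\|^2]=h$ and the elementary bound $\E[\|\Pi_{r_1,r_2}\|^2]\le\tfrac12 h^2$, which control the $g\Delta W$ term and the Milstein term respectively.

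The heart of the argument is the dissipative estimate for the deterministic part. Applying \eqref{eq:lambda} to the vector $\Phi(\tilde{X}_{-k\tau+jh}^{-k\tau})$ gives $\langle\Phi,A\Phi\rangle\le-\lambda_1\|\Phi\|^2$, and combining this with the coercivity condition \eqref{eq_PMM:coercivity_condition} of Assumption \ref{ass_PMM} (in which $\tfrac{2p^*-1}{2}\ge\tfrac12$, so $\langle x,f\rangle+\tfrac12\|g\|^2\le K_1(1+\|x\|^2)$), the dominant group $\|\Phi\|^2+2h\langle\Phi,A\Phi\rangle+2h\langle\Phi,f\rangle+h\|g\|^2$ collapses to $(1-2(\lambda_1-K_1)h)\|\Phi\|^2+2K_1 h$; the nonexpansivity $\|\Phi(x)\|\le\|x\|$ from Lemma \ref{lem_PMM:Phi(x)} then replaces $\|\Phi\|^2$ by $\|\tilde{X}_{-k\tau+jh}^{-k\tau}\|^2$. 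All remaining terms are lower order: using the projection bounds $\|\Phi(x)\|\le h^{-1/(2\gamma)}$, $\|f(t,\Phi(x))\|\le M_f h^{-1/2}$ and $\|\mathcal{L}^{r_1}g_{r_2}(t,\Phi(x))\|\le M_{\mathcal{L}}h^{-1/2}$ from Lemma \ref{lem_PMM:Phi(x)}, together with a Young inequality on the $2h^2\langle A\Phi,f\rangle$ cross term, each of $h^2\|A\Phi\|^2$, $h^2\|f\|^2$ and $\tfrac12 h^2\sum\|\mathcal{L}^{r_1}g_{r_2}\|^2$ is bounded by a constant times $h$, and these are absorbed into $\tilde{C}h$.

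With the recursion established I would iterate it to obtain $\E[\|\tilde{X}_{-k\tau+(j+1)h}^{-k\tau}\|^2]\le(1-2(\lambda_1-K_1)h)^{j+1}\E[\|\xi\|^2]+\tilde{C}h\sum_{i=0}^{j}(1-2(\lambda_1-K_1)h)^i$. Since Assumption \ref{ass_PMM} guarantees $K_1<\lambda_1$, for $h\in(0,1)$ small the contraction factor lies in $(0,1)$, the geometric sum is at most $\tfrac{1}{2(\lambda_1-K_1)h}$, and the right-hand side is bounded by $C(1+\E[\|\xi\|^2])$ uniformly in $k$ and $j$; the moment bound on $\xi$ then delivers \eqref{eq_PMM:second moment of PEM}. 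The main obstacle I anticipate is purely the bookkeeping of the many cross terms and verifying that each either vanishes by the martingale property or is genuinely $O(h)$; the subtle point is that the $h^{-1/2}$ blow-up of $f(\cdot,\Phi(\cdot))$ and $\mathcal{L}^{r_1}g_{r_2}(\cdot,\Phi(\cdot))$ is exactly cancelled by the $h^2$ prefactors, so the projection is essential to keeping these contributions summable uniformly in time.
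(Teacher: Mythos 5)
Your proof is correct, but it takes a genuinely different route from the paper. The paper proves this corollary in two lines: it writes
$\E[\|\tilde{X}\|^{2}]\le 2\,\E[\|\tilde{X}-X\|^{2}]+2\,\E[\|X\|^{2}]$
and invokes the uniform order-one error bound of Theorem \ref{thm_PMM:error analysis} together with the moment bound for the exact solution in Lemma \ref{lem_PMM:the_pth_of_exact_solution}. You instead prove the moment bound directly, by squaring the scheme, killing the martingale cross terms under conditioning, extracting the dissipative block $\|\Phi\|^{2}+2h\langle\Phi,A\Phi\rangle+2h\langle\Phi,f\rangle+h\|g\|^{2}\le(1-2(\lambda_1-K_1)h)\|\Phi\|^{2}+2K_1h$ from \eqref{eq:lambda} and the coercivity \eqref{eq_PMM:coercivity_condition}, and absorbing the remaining $h^{2}\|A\Phi\|^{2}$, $h^{2}\|f(\cdot,\Phi)\|^{2}$ and Milstein terms into $\tilde{C}h$ via the projection bounds of Lemma \ref{lem_PMM:Phi(x)}; iterating the contraction then gives the uniform bound. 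Your argument is self-contained and, notably, uses only Assumption \ref{ass_PMM} --- which is all the corollary hypothesizes --- whereas the paper's route implicitly imports the stronger Assumptions \ref{ass_PMM:generalizedz_monotonicity_condition} and \ref{ass_PMM：polynomial_growth} and the stepsize restriction needed for Theorem \ref{thm_PMM:error analysis}. The price is the bookkeeping of cross terms, plus two small points you should make explicit: the replacement of $\|\Phi(\tilde{X})\|^{2}$ by $\|\tilde{X}\|^{2}$ is only valid once $h$ is small enough that $1-2(\lambda_1-K_1)h\ge 0$, and bounding the squared Milstein sum requires either the pairwise orthogonality $\E[\Pi_{r_1,r_2}\Pi_{r_3,r_4}]=0$ for distinct index pairs or a crude Cauchy--Schwarz with an extra $m^{2}$ factor. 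Neither gap is substantive.
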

\begin{proof}
[Proof of Corollary \ref{cor_PMM:second moment of PMM}]
    Combining Theorem
    \ref{thm_PMM:error analysis} with Lemma
    \ref{lem_PMM:the_pth_of_exact_solution}
    yields
    \begin{equation}
        \sup_{k,j\in N}
        \E
        [\|
        \tilde{X}_{-k\tau+jh}^{-k\tau}
        \|^2] 
        \leq
        2\sup_{k,j\in N}
        \E
        [\|
        \tilde{X}_{-k\tau+jh}^{-k\tau}
        -
        X_{-k\tau+jh}^{-k\tau}
        \|^2]
        +
        2
        \sup_{k,j\in N}
        \E
        [\|
        X_{-k\tau+jh}^{-k\tau}
        \|^2] 
        <
        \infty.
    \end{equation}
\end{proof}
The following lemma indicates that any two numerical solutions,
starting from different initial conditions,
can become arbitrarily close after a sufficiently large number of iterations.
\begin{lem}  \label{lem_PMM:contractivity_of_PMM}
  Let
    Assumption \ref{ass_PMM} 
    hold.
    Let
    ${ \tilde{X}^{-k\tau}_{-k\tau+jh}}$ and ${ \tilde{Y}^{-k\tau}_{-k\tau+jh}}$ 
    be two solutions of the 
    projected Milstein scheme
   \eqref{eq:the_projected_milstein_method}
    with initial values $\xi$
    and $\eta$ satisfying condition (iv) in Assumption \ref{ass_PMM}, respectively.
    Then it holds that
    \begin{equation}
        \label{eq_PMM:the_seond_moment_|x-y|}
        \E
        \Big[
        \big\|
        \tilde{X}
        ^{-k\tau}_{-k\tau+(j+1)h}
        -
        \tilde{Y}
        ^{-k\tau}_{-k\tau+(j+1)h}
        \big\|
        ^{2}
        \Big]
        \leq
        e
        ^
        {
        -
        (
        \lambda_1
        -K_2
        -
        \beta_{\mathcal{L}}^2}
        )
        (j+1)h
        \E
        [
        \| \xi-\eta \|^{2}  ],
    \end{equation}
    where $h$ is the timestep satisfying
    \begin{equation}
         h
        \in
    \bigg(
    0,
    \min
    \bigg\{
    \dfrac
    {
    (\lambda_1
        -K_2
        -
        \beta_{\mathcal{L}}^2)^{\gamma}}{{(\lambda_d+\beta_f)^{2\gamma}}},
    \dfrac{1}{\lambda_1-K_2-
\beta_{\mathcal{L}}^2}
    ,
    1
    \bigg\}
    \bigg).
    \end{equation}
\end{lem}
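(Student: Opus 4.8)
The plan is to mirror the one-step analysis carried out in the proof of Theorem~\ref{thm_PMM:error analysis}, the decisive simplification being that here both $\tilde X$ and $\tilde Y$ solve the \emph{same} scheme \eqref{eq:the_projected_milstein_method}, so no local residual $\mathcal R_{-k\tau+(j+1)h}$ appears. Writing $E_{-k\tau+jh}:=\tilde X^{-k\tau}_{-k\tau+jh}-\tilde Y^{-k\tau}_{-k\tau+jh}$ and introducing the projected differences $\Delta X^{\Phi}_{-k\tau+jh}$, $\Delta f^{\Phi}_{-k\tau+jh}$, $\Delta g^{\Phi}_{-k\tau+jh}$ and $\Delta(\mathcal L^{r_1}g_{r_2})^{\Phi}_{-k\tau+jh}$ exactly as in \eqref{eq_PMM:the_short-hand_notation}, but with the pair $(\tilde X,\tilde Y)$ replacing $(X,\tilde X)$, I would subtract the two copies of \eqref{eq:the_projected_milstein_method} to obtain an identity for $E_{-k\tau+(j+1)h}$ of the same shape as \eqref{eq_PMM:e_j+1} with the $\mathcal R$-term deleted.

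Next I would square this identity and take expectations. Since $\Delta X^{\Phi}$, $\Delta f^{\Phi}$, $\Delta g^{\Phi}$ and $\Delta(\mathcal L^{r_1}g_{r_2})^{\Phi}$ are all $\mathcal F_{-k\tau+jh}$-measurable, while $\Delta W_{-k\tau+jh}$ and $\Pi^{\cdots}_{r_1,r_2}$ are centred and independent of $\mathcal F_{-k\tau+jh}$, every mixed inner product containing exactly one stochastic increment vanishes in expectation, just as in the display following \eqref{eq_PMM:|e_j+1|^2}. The same independence, together with the vanishing of the L\'evy-area cross-correlations $\E[\Pi_{r_1,r_2}\Pi_{r_3,r_4}]=0$ for $\{r_1,r_2\}\neq\{r_3,r_4\}$, yields $\E\|\Delta g^{\Phi}\Delta W\|^2=h\,\E\|\Delta g^{\Phi}\|^2$ and $\E\|\sum\Delta(\mathcal L^{r_1}g_{r_2})^{\Phi}\Pi_{r_1,r_2}\|^2\le h^2\sum\E\|\Delta(\mathcal L^{r_1}g_{r_2})^{\Phi}\|^2$, exactly the two facts verified in the theorem's proof.

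The heart of the argument is the one-step contraction estimate, and I would keep every quantity expressed through $\|\Delta X^{\Phi}_{-k\tau+jh}\|$. The spectral bound \eqref{eq:lambda} gives $2h\langle\Delta X^{\Phi},A\Delta X^{\Phi}\rangle\le-2\lambda_1 h\|\Delta X^{\Phi}\|^2$; Assumption~\ref{ass_PMM:generalizedz_monotonicity_condition}, used with $q=1$ (weaker than the stated $q$, since $\frac{2q-1}{2}\ge\frac12$ and $\|\Delta g^{\Phi}\|^2\ge 0$), combines the surviving drift inner product with the diffusion square via $2h\langle\Delta X^{\Phi},\Delta f^{\Phi}\rangle+h\|\Delta g^{\Phi}\|^2\le 2K_2 h\|\Delta X^{\Phi}\|^2$; and the growth/projection bounds of Lemma~\ref{lem_PMM:Phi(x)} control the two $O(h^2)$ terms by $(\lambda_d+\beta_f)^2 h^{1/\gamma}\cdot h\|\Delta X^{\Phi}\|^2$ and $\beta_{\mathcal L}^2 h^{1+1/\gamma}\|\Delta X^{\Phi}\|^2\le\beta_{\mathcal L}^2 h\|\Delta X^{\Phi}\|^2$, as in \eqref{eq_PMM:E[AX+f]}--\eqref{eq_PMM:the_esti_h^2E|Delta_L|^2}. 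Summing these and invoking the stepsize restriction, which forces $(\lambda_d+\beta_f)^2 h^{1/\gamma}<\lambda_1-K_2-\beta_{\mathcal L}^2$ and $1-(\lambda_1-K_2-\beta_{\mathcal L}^2)h\in(0,1)$, collapses the coefficient of $\E\|\Delta X^{\Phi}\|^2$ to a factor bounded above by $1-(\lambda_1-K_2-\beta_{\mathcal L}^2)h$. Finally I would apply the non-expansiveness $\|\Delta X^{\Phi}_{-k\tau+jh}\|\le\|E_{-k\tau+jh}\|$ from \eqref{eq:the_esti_Phi(x)-Phi(y)} to arrive at $\E\|E_{-k\tau+(j+1)h}\|^2\le(1-(\lambda_1-K_2-\beta_{\mathcal L}^2)h)\E\|E_{-k\tau+jh}\|^2$, iterate this geometric recursion down to $E_{-k\tau}=\xi-\eta$, and use $1-x\le e^{-x}$ to obtain the claimed exponential decay.

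I expect the main difficulty to be bookkeeping rather than conceptual: keeping all higher-order contributions expressed in $\|\Delta X^{\Phi}\|$ so that a single coefficient multiplies one quantity (which is what makes the geometric iteration legitimate), and confirming that the positive $O(h^{1+1/\gamma})$ corrections are absorbed by the negative drift contribution under the stated bound. The one genuinely delicate point is guaranteeing the recursion is applied with a coefficient in $(0,1)$; the restriction $h<1/(\lambda_1-K_2-\beta_{\mathcal L}^2)$ is precisely what secures positivity, after which the exponential estimate is immediate.
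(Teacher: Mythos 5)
Your proposal is correct and follows essentially the same route as the paper: subtract the two copies of the scheme, expand the square, kill the mixed terms with centred increments, apply the monotonicity condition together with the spectral bound and the Lipschitz-type estimates of Lemma \ref{lem_PMM:Phi(x)} to the $O(h^{1+1/\gamma})$ corrections, and close the geometric recursion under the stated stepsize restriction via $1-x\le e^{-x}$. The only cosmetic differences are your observation that $q=1$ already suffices in Assumption \ref{ass_PMM:generalizedz_monotonicity_condition} and slightly sharper constant bookkeeping on the $\beta_{\mathcal{L}}^2$ term, neither of which changes the argument.
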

\begin{proof}
    [Proof of Lemma \ref{lem_PMM:contractivity_of_PMM}]
    To simplify the notation,
    we denote
    \begin{equation}
        \begin{split}
        &\tilde{Z}_j:=
        \tilde{X}^{-k\tau}_{-k\tau+jh}
        -
        \tilde{Y}^{-k\tau}_{-k\tau+jh},
        \quad
        \Delta \tilde{\Phi}^{X,Y}_j
        :=
        \Phi
        \big(
        \tilde{X}^{-k\tau}_{-k\tau+jh}
        \big)
        -
        \Phi
        \big(
        \tilde{Y}^{-k\tau}_{-k\tau+jh}
        \big), \\
        &
        \Delta \tilde{f}^{X,Y}_j
        :=
        f
        \Big(
        jh,
        \Phi
        \big(
        \tilde{X}^{-k\tau}_{-k\tau+jh}
        \big)
        \Big)
        -
        f
        \Big(
        jh,
        \Phi
        \big(
        \tilde{Y}^{-k\tau}_{-k\tau+jh}
        \big)
        \Big),\\
        &
        \Delta \tilde{g}^{X,Y}_j
        :=
        g
        \Big(
        jh,
        \Phi
        \big(
        \tilde{X}^{-k\tau}_{-k\tau+jh}
        \big)
        \Big)
        -
        g
        \Big(
        jh,
        \Phi
        \big(
        \tilde{Y}^{-k\tau}_{-k\tau+jh}
        \big)
        \Big),\\
        &
        \Delta \tilde{\mathcal{L}}^{X,Y}_{j,_{r_1,r_2}}
        :=
        \mathcal{L}^{r_1}
        g_{r_2}
        \Big(
        jh,
        \Phi
        \big(
        \tilde{X}^{-k\tau}_{-k\tau+jh}
        \big)
        \Big)
        -
        \mathcal{L}^{r_1}
        g_{r_2}
        \Big(
        jh,
        \Phi
        \big(
        \tilde{Y}^{-k\tau}_{-k\tau+jh}
        \big)
        \Big).\\
        \end{split}
    \end{equation}
    It is apparent to show that
    \begin{equation}
        \tilde{Z}_{j+1}
        =
        \Delta \tilde{\Phi}^{X,Y}_j
        +
        Ah
        \Delta \tilde{\Phi}^{X,Y}_j
        +
        h
        \Delta \tilde{f}^{X,Y}_j
        +
        \Delta \tilde{g}^{X,Y}_j
        \Delta W_{-k\tau+jh}
        +
        \sum_{r_1,r_2=1}^{m}
        \Delta \tilde{\mathcal{L}}^{X,Y}_{j,_{r_1,r_2}}
        \Pi^{-k\tau+jh,-k\tau+(j+1)h}_{r_1,r_2}.
    \end{equation}
    Taking expectations and 
    taking square on both sides,
    one can arrive at
    \begin{equation}
        \begin{split}
        \E
        \big[
        \|
        \tilde{Z}_{j+1}
        \|^2
        \big]
        & =
        \E
        \Big[
        \big\|
        \Delta \tilde{\Phi}^{X,Y}_j
        \big\|^2
        \Big]
        +
        h^2
        \E
        \Big[
        \big\|
        A
        \Delta \tilde{\Phi}^{X,Y}_j
        \big\|^2
        \Big]
        +
        h^2
        \E
        \Big[
        \big\|
        \Delta \tilde{f}^{X,Y}_j
        \big\|^2
        \Big]
        +
        \E
        \Big[
        \big\|
        \Delta \tilde{g}^{X,Y}_j
        \Delta W_{-k\tau+jh}
        \big\|^2
        \Big] \\
        &\quad +
        \E
        \bigg[
        \big\|
        \sum_{r_1,r_2=1}^{m}
        \Delta \tilde{\mathcal{L}}^{X,Y}_{j,_{r_1,r_2}}
        \Pi^{-k\tau+jh,-k\tau+(j+1)h}_{r_1,r_2}
        \big\|^2
        \bigg]
        +
        2h
        \E
        \Big[
        \big\langle
        \Delta \tilde{\Phi}^{X,Y}_j,
        A\Delta \tilde{\Phi}^{X,Y}_j
        \big\rangle
        \Big] \\
        & \quad +
        2h
        \E
        \Big[
        \big\langle
        \Delta \tilde{\Phi}^{X,Y}_j,
        \Delta \tilde{f}^{X,Y}_j
        \big\rangle
        \Big] 
        +
        2h^2
        \E
        \Big[
        \big\langle
        A\Delta \tilde{\Phi}^{X,Y}_j,
        \Delta \tilde{f}^{X,Y}_j
        \big\rangle
        \Big] .
        \end{split}
    \end{equation}\
    Noting that
    \begin{align}
        \E
        \Big[
        \big\|
        \Delta \tilde{g}^{X,Y}_j
        \Delta W_{-k\tau+jh}
        \big\|^2
        \Big]
        &=
        h
        \E
        \Big[
        \big\|
        \Delta \tilde{g}^{X,Y}_j
        \big\|^2
        \Big], \\
         \E
        \Big[
        \big\|
        \sum_{r_1,r_2=1}^{m}
        \Delta \tilde{\mathcal{L}}
        ^{X,Y}_{j,_{r_1,r_2}}
        \Pi
        ^{-k\tau+jh,-k\tau+(j+1)h}
        _{r_1,r_2}
        \big\|^2
        \Big]
        &
        \leq
        h^2
        \sum_{r_1,r_2=1}^{m}
        \E
        \Big[
        \big\|
        \Delta \tilde{\mathcal{L}}^{X,Y}_{j,_{r_1,r_2}}
        \big\|^2
        \Big].
    \end{align} 
    In view of   \eqref{eq_PMM:the_esti_h^2E|Delta_L|^2}, one can see that
    \begin{equation}
        h^2
        \sum_{r_1,r_2=1}^{m}
        \E
        \Big[
        \big\|
        \Delta \tilde{\mathcal{L}}^{X,Y}_{j,_{r_1,r_2}}
        \big\|^2
        \Big] 
        \leq
        \beta_{\mathcal{L}}^2
        h
        \E
        [
        \|
        \Delta \tilde{\Phi}^{X,Y}_j
        \|^2
        ] 
        \leq
        2 \beta_{\mathcal{L}}^2
        h
        \E
        [
        \|
        \Delta \tilde{\Phi}^{X,Y}_j
        \|^2
        ].
    \end{equation}
Using the Cauchy-Schwarz inequality
    leads to
    \begin{equation}
        2h^2
        \E
        \Big[
        \big\langle
        A\Delta \tilde{\Phi}^{X,Y}_j,
        \Delta \tilde{f}^{X,Y}_j
        \big\rangle
        \Big]
        \leq
        2h^2
        \E
        \Big[
        \big\|
        A\Delta \tilde{\Phi}^{X,Y}_j
        \big\|
        \cdot
        \big\|
        \Delta \tilde{f}^{X,Y}_j
        \big\|
        \Big].
    \end{equation}
    Recalling Assumption \ref{ass_PMM}, Assumption \ref{ass_PMM:generalizedz_monotonicity_condition}
    and Lemma \ref{lem_PMM:Phi(x)},
    one can deduce
    \begin{equation}
        \begin{split}
        \E
        \big[
        \|
        \tilde{Z}_{j+1}
        \|^2
        \big]
        & \leq
        \E
        \Big[
        \big\|
        \Delta \tilde{\Phi}^{X,Y}_j
        \big\|^2
        \Big]
        +
        2h
        \bigg\{
        \E
        \Big[
        \big\langle
        \Delta \tilde{\Phi}^{X,Y}_j,
        A\Delta \tilde{\Phi}^{X,Y}_j
        \big\rangle
        \Big]
        +
        \E
        \Big[
        \big\langle
        \Delta \tilde{\Phi}^{X,Y}_j,
        \Delta \tilde{f}^{X,Y}_j
        \big\rangle
        \Big] \\
        &\quad +
        \tfrac{2q-1}{2}
        \E
        \Big[
        \big\|
        \Delta \tilde{g}^{X,Y}_j
        \big\|^2
        \Big]
        +
        \beta
        _{\mathcal{L}}^2
        \sum_{r_1,r_2=1}^{m}
        \E
        \Big[
        \big\|
        \Delta \tilde{\mathcal{L}}^{X,Y}_{j,_{r_1,r_2}}
        \big\|^2
        \Big]
        \bigg\} 
        \\
        &\quad +
        \lambda_d^2h^2
        \E
        \Big[
        \big\|
        \Delta \tilde{\Phi}^{X,Y}_j
        \big\|^2
        \Big]
        +
        2\lambda_d\beta_f
        h^{1+\frac{\gamma+1}{2\gamma}}
        \E
        \Big[
        \big\|
        \Delta \tilde{\Phi}^{X,Y}_j
        \big\|^2
        \Big]
        +
        \beta_f^2
        h^{1+\frac{1}{\gamma}}
        \E
        \Big[
        \big\|
        \Delta \tilde{\Phi}^{X,Y}_j
        \big\|^2
        \Big] \\
        & \leq
        \Big(
        1
        -
        2
        (
        \lambda_1
        -K_2
        -
        \beta_{\mathcal{L}}^2
        )h
        \Big)
        \E
        \big[
        \|
        \tilde{Z}_j
        \|^2
        \big]
        +
        \Big(
        \lambda_d^2h
        +
        2\lambda_d\beta_f
        h^{\frac{\gamma+1}{2\gamma}}
        +
        \beta_f^2
        h^{\frac{1}{\gamma}}
        \Big) h
        \E
        \big[
        \|
        \tilde{Z}_j
        \|^2
        \big].
        \end{split}
    \end{equation}
According to 
    $\gamma \geq 1 $,
    one can get
    \begin{equation}
        \lambda_d^2h
        +
        2\lambda_d\beta_f
        h^{\frac{\gamma+1}{2\gamma}}
        +
        \beta_f^2
        h^{\frac{1}{\gamma}}
        \leq
        (\lambda_d+\beta_f)^2
        h^{\frac{1}{\gamma}}.
    \end{equation}
    Here we select an appropriate $h$ such that
    such that
    \begin{equation}
        (\lambda_d+\beta_f)^2
        h^{\frac{1}{\gamma}}
        \leq
        \lambda_1
        -K_2
        -
        \beta_{\mathcal{L}}^2,
    \end{equation}
    which leads to 
    \begin{equation}
        h
        \in
    \Big(
    0,
    \min
    \Big\{
    \tfrac
    {
    (\lambda_1
    -K_2
    -
\beta_{\mathcal{L}}^2)^{\gamma}}{(\lambda_d+\beta_f)^{2\gamma}},
    \tfrac{1}{\lambda_1-K_2-
\beta_{\mathcal{L}}^2}
    ,
    1
    \Big\}
    \Big),
    \end{equation}
    \begin{equation}
        \E \big[
        \|
        \tilde{Z}_{j+1}
        \|
        \big]
        \leq
        \big(
        1
        -
        (
        \lambda_1
        -K_2
        -
        \beta_{\mathcal{L}}^2
        )h
        \big)
         \E \big[
        \|
        \tilde{Z}_{j}
        \|
        \big]
        \leq
        e
        ^
        {-
        (
        \lambda_1
        -K_2
        -
        \beta_{\mathcal{L}}^2
        )}
        (j+1)h
        \E
        \big[
        \| \xi-\eta \|^{2}
        \big].
    \end{equation}  
    The proof is completed.
\end{proof}

Within the framework of Theorem 3.4 as presented by \cite{feng2017numerical},
we can derive the existence and uniqueness of the random periodic solution to the projected Milstein method \eqref{eq:the_projected_milstein_method}.
\begin{thm}
\label{thm:PMM random period solution}
   Let Assumption \ref{ass_PMM}  hold. 
   For 
   $
         h
        \in
    \Big(
    0,
    \min
    \Big\{
    \tfrac
    {
    (
    \lambda_1
    -K_2
    -
    \beta_{\mathcal{L}}^2)
    ^{\gamma}}
    {(\lambda_d+\beta_f)^{2\gamma}},
    \tfrac{1}{\lambda_1-K_2-
\beta_{\mathcal{L}}^2}
    ,
    1
    \Big\}
    \Big)
    $,
   the projected Milstein method \eqref{eq:the_projected_milstein_method} admits a random period solution $\tilde{X}^{*}_{t} \in L^{2}(\Omega)$ such that
\begin{equation}
  \lim_{k \rightarrow 
  \infty }
   \E
   \Big[
   \big\|
   \tilde{X}_{-k\tau+jh}^{-k\tau}(\xi)
   -\tilde{X}^{*}_{t}
   \big\|^{2}
   \Big]
   =0.
\end{equation}
\end{thm}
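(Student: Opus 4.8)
The plan is to follow the pull-back framework of \cite[Theorem 3.4]{feng2017numerical}: I would fix a grid time $t=-k\tau+jh$ and show that the family of pull-back approximations $\{\tilde{X}_{t}^{-k\tau}(\xi)\}_{k \in \N}$ is a Cauchy sequence in $L^2(\Omega;\R^d)$, and then identify its limit $\tilde{X}^*_t$ as the desired random periodic solution. The two workhorses are Corollary \ref{cor_PMM:second moment of PMM}, which supplies a uniform-in-$(k,j)$ second-moment bound, and Lemma \ref{lem_PMM:contractivity_of_PMM}, which provides exponential contraction between two numerical trajectories launched from different initial data. Note that the admissible range of $h$ in the statement forces $\lambda_1-K_2-\beta_{\mathcal{L}}^2>0$, so this contraction genuinely decays.

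The structural ingredient that makes the contraction usable is the discrete flow (cocycle) property of the scheme combined with the $\tau$-periodicity of $f$ and $g$ granted by Assumption \ref{ass_PMM}(ii). Concretely, for $l \in \N$ I would decompose the longer pull-back $\tilde{X}^{-(k+l)\tau}_{t}$ by first running the scheme \eqref{eq:the_projected_milstein_method} from $-(k+l)\tau$ up to $-k\tau$, producing the $\mathcal{F}_{-k\tau}$-measurable value $\tilde{X}^{-(k+l)\tau}_{-k\tau}$, and then continuing from $-k\tau$ to $t$. Because the one-step maps depend on time only through $f(jh,\cdot)$, $g(jh,\cdot)$ and $\mathcal{L}^{r_1}g_{r_2}(jh,\cdot)$, periodicity ensures that the segment from $-k\tau$ to $t$ is governed by exactly the same family of update maps and the same driving increments as the shorter pull-back $\tilde{X}^{-k\tau}_{t}(\xi)$, only with the random starting value $\tilde{X}^{-(k+l)\tau}_{-k\tau}$ in place of $\xi$; since the increments on $[-k\tau,t]$ are independent of $\mathcal{F}_{-k\tau}$, this random starting value is admissible in Lemma \ref{lem_PMM:contractivity_of_PMM}.

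With this identification, Lemma \ref{lem_PMM:contractivity_of_PMM}, applied over the steps separating $-k\tau$ from $t$, yields
\begin{equation}
    \E\Big[\big\|\tilde{X}^{-(k+l)\tau}_{t} - \tilde{X}^{-k\tau}_{t}(\xi)\big\|^2\Big]
    \leq
    e^{-(\lambda_1-K_2-\beta_{\mathcal{L}}^2)(t+k\tau)}
    \,\E\Big[\big\|\tilde{X}^{-(k+l)\tau}_{-k\tau} - \xi\big\|^2\Big].
\end{equation}
By Corollary \ref{cor_PMM:second moment of PMM} and Assumption \ref{ass_PMM}(iv) the factor $\E[\|\tilde{X}^{-(k+l)\tau}_{-k\tau} - \xi\|^2]$ is bounded uniformly in $k,l$, so the right-hand side decays exponentially to $0$ as $k \to \infty$, uniformly in $l$. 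Hence $\{\tilde{X}^{-k\tau}_{t}(\xi)\}_k$ is Cauchy and converges in $L^2(\Omega;\R^d)$ to some $\tilde{X}^*_t$; being an $L^2$-limit of $\mathcal{F}$-measurable variables, $\tilde{X}^*_t$ is $\mathcal{F}$-measurable and lies in $L^2(\Omega)$.

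It remains to verify that $\tilde{X}^*$ is genuinely random periodic in the sense of Definition \ref{def:rps}, namely $\tilde{X}^*_{t+\tau} = \tilde{X}^*_t \circ \theta_\tau$, and that it is unique. The periodicity relation I would obtain by passing to the limit in the identity $\tilde{X}^{-k\tau}_{t+\tau}(\xi,\omega) = \tilde{X}^{-(k+1)\tau}_{t}(\xi,\theta_\tau\omega)$, which follows from the periodicity of the coefficients together with the stationarity of the Wiener increments under $\theta_\tau$; letting $k\to\infty$ on both sides then gives the claim. Uniqueness follows because any two random periodic solutions would, by Lemma \ref{lem_PMM:contractivity_of_PMM}, have an $L^2$-difference that simultaneously decays exponentially in the elapsed number of steps and is invariant under the period shift, forcing it to vanish. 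I expect the principal obstacle to be the careful bookkeeping of the flow/periodicity identification in the second step --- in particular confirming that restarting the scheme from the random, $\mathcal{F}_{-k\tau}$-measurable value $\tilde{X}^{-(k+l)\tau}_{-k\tau}$ reproduces \emph{verbatim} the update maps of $\tilde{X}^{-k\tau}_{\cdot}(\xi)$, so that the contractivity lemma applies directly; once this is in place, the convergence estimate itself is routine.
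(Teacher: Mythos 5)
Your proposal is correct and follows essentially the same route the paper intends: the paper gives no explicit proof of this theorem, instead deferring to the framework of \cite[Theorem 3.4]{feng2017numerical}, and your pull-back Cauchy-sequence argument built on the uniform second-moment bound (Corollary \ref{cor_PMM:second moment of PMM}) and the exponential contractivity of two numerical trajectories (Lemma \ref{lem_PMM:contractivity_of_PMM}) is precisely that framework spelled out. The only bookkeeping points worth flagging are that $\tau/h$ must be an integer so that $-k\tau$ is a grid point of the scheme started at $-(k+l)\tau$, and that the restarted initial datum $\tilde{X}^{-(k+l)\tau}_{-k\tau}$ need only have the finite second moment supplied by the corollary for the contraction estimate to apply.
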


\begin{cor}
\label{cor_PMM:ch}
    Let $X_t^{*}$ be the random periodic solution of SDE \eqref{eq_PMM:Problem_SDE}
    and let $\tilde{X}_t^{*}$
    be the numerical approximating random
    periodic solution of
    \eqref{eq:the_projected_milstein_method},
    with $\gamma \in \big[1,\frac{p^*}{5}\big]$.
    Suppose that Assumption \ref{ass_PMM} holds,
    then there exists $C>0$,
    independent of $h$,
    such that
    \begin{equation}
        \|
        X^*_t
        -\tilde{X}^*_t
        \|_{L^{2}(\Omega;\R^d)}
        \leq
        Ch.
    \end{equation}
\end{cor}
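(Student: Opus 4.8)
The plan is to combine the two pull-back convergence results with the uniform one-step-to-global error estimate from Theorem \ref{thm_PMM:error analysis} through a single triangle inequality. Fix a time $t$ lying on the numerical grid, so that for each $k \in \N$ there is an index $j=j(k)$ with $-k\tau+jh=t$; this is consistent with the period $\tau$ being an integer multiple of the stepsize $h$, as is implicit in the construction of the scheme \eqref{eq:the_projected_milstein_method}. I would then insert the exact and discrete pull-backs evaluated at this common grid point and estimate
\begin{equation}
\begin{split}
\|X_t^*-\tilde{X}_t^*\|_{L^2(\Omega;\R^d)}
&\leq
\|X_t^*-X_{-k\tau+jh}^{-k\tau}\|_{L^2(\Omega;\R^d)}
+\|X_{-k\tau+jh}^{-k\tau}-\tilde{X}_{-k\tau+jh}^{-k\tau}\|_{L^2(\Omega;\R^d)}\\
&\quad+\|\tilde{X}_{-k\tau+jh}^{-k\tau}-\tilde{X}_t^*\|_{L^2(\Omega;\R^d)}.
\end{split}
\end{equation}

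The three terms are handled separately. The middle term is controlled uniformly in $k$ and $j$ by Theorem \ref{thm_PMM:error analysis}: since $\sup_{k,j\in\N}\E[\|X_{-k\tau+jh}^{-k\tau}-\tilde{X}_{-k\tau+jh}^{-k\tau}\|^2]\leq Ch^2$ (valid because $\gamma\in[1,p^*/5]$ and Assumptions \ref{ass_PMM}, \ref{ass_PMM：polynomial_growth} hold), taking square roots gives the bound $Ch$ with a constant $C$ independent of $k$, $j$ and $h$. The decisive feature here is that this estimate is uniform in the starting index $-k\tau$, so it persists under the limit $k\to\infty$. The first term tends to zero as $k\to\infty$ by Assumption \ref{thm:unique_random_periodic_solution}, which identifies $X_t^*$ as the $L^2$-limit of the exact pull-back $X_t^{-k\tau}(\xi)$. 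The third term tends to zero as $k\to\infty$ by Theorem \ref{thm:PMM random period solution}, which identifies $\tilde{X}_t^*$ as the $L^2$-limit of the discrete pull-back $\tilde{X}_{-k\tau+jh}^{-k\tau}(\xi)$.

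Since the left-hand side is independent of $k$, I would then pass to the limit $k\to\infty$ on the right-hand side: the first and third terms vanish, while the middle term remains bounded by $Ch$, which yields $\|X_t^*-\tilde{X}_t^*\|_{L^2(\Omega;\R^d)}\leq Ch$, as required. The only genuine subtlety, and the step deserving the most care, is the bookkeeping of the time indexing: one must verify that the single time $t$ can be written as $-k\tau+j(k)h$ simultaneously for both the exact and the numerical pull-backs for all large $k$, so that the middle difference is precisely a PMM error at a grid point to which Theorem \ref{thm_PMM:error analysis} applies. Everything else reduces to the triangle inequality together with the two limit statements already established, so no further estimates are needed.
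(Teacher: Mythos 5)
Your proposal is correct and follows essentially the same route as the paper: a triangle inequality through the exact and numerical pull-backs, with the middle term bounded uniformly by Theorem \ref{thm_PMM:error analysis} and the outer terms vanishing as $k\to\infty$ by Assumption \ref{thm:unique_random_periodic_solution} and Theorem \ref{thm:PMM random period solution}. Your extra care about writing $t=-k\tau+j(k)h$ for all large $k$ is a point the paper leaves implicit, but it does not change the argument.
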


\begin{proof}[Proof of Corollary \ref{cor_PMM:ch}]
Noting that
\begin{equation}
\E[
\| X^{*}_{t}-\tilde{X}^{*}_{t}\|^{2}]
\leq
\limsup_{k}
\big[
\E[\| X^{*}_{t}-X^{-k\tau}_{t}\|^{2}]
+\E[\| X^{-k\tau}_{t}-\tilde{X}^{-k\tau}_{t}\|^{2}]
+\E[\|\tilde{X}^{-k\tau}_{t}-\tilde{X}^{*}_{t}\|^{2}]\big],
\end{equation}
the conclusion can be obtained by Theorem \ref{thm:unique_random_periodic_solution}, Theorem \ref{thm_PMM:error analysis},
Theorem \ref{thm:PMM random period solution}.
\end{proof}

\section{Numerical experiments}
\label{sec_PMM:numerical results}
In this section,
we conduct numerical experiments to demonstrate the previous theoretical results. 
Let us focus on the following
one-dimensional SDE
\begin{equation}
    \label{eq_PMM:example_1}
     \dd X^{t_{0}}_{t}
    =
    \big(
    -2\pi 
    X^{t_{0}}_{t}
     +
     X^{t_{0}}_{t}
     -
     (
     X^{t_{0}}_{t}
     )^3
     +\cos(\pi t)
     \big)
     \, \dd t
     +
     \big(
     1+
     (X^{t_{0}}_{t})^2
     +
     \cos(\pi t)
     \big)
      \, \dd W_{t},
      \:
      X_{t_{0}}^{t_{0}}  = \xi
      .
\end{equation}

Theorem \ref{thm:PMM random period solution} indicates that the projected Milstein method applied to \eqref{eq_PMM:example_1} admits a unique random periodic solution.
Let us first numerically verity that, the random periodic solution of projected Milstein method does not depend on the initial values.
To fulfill this, we choose the time grad between $t_0=-20$
and $T=0$ with stepsize 0.01,
and select two initial values to be $\xi = 0.8$ and $\xi = -0.5$.
As presented in Figure \ref{fig1},
two paths match after a very short time.

\begin{figure}[H]
   \centering
\includegraphics[scale=0.5]{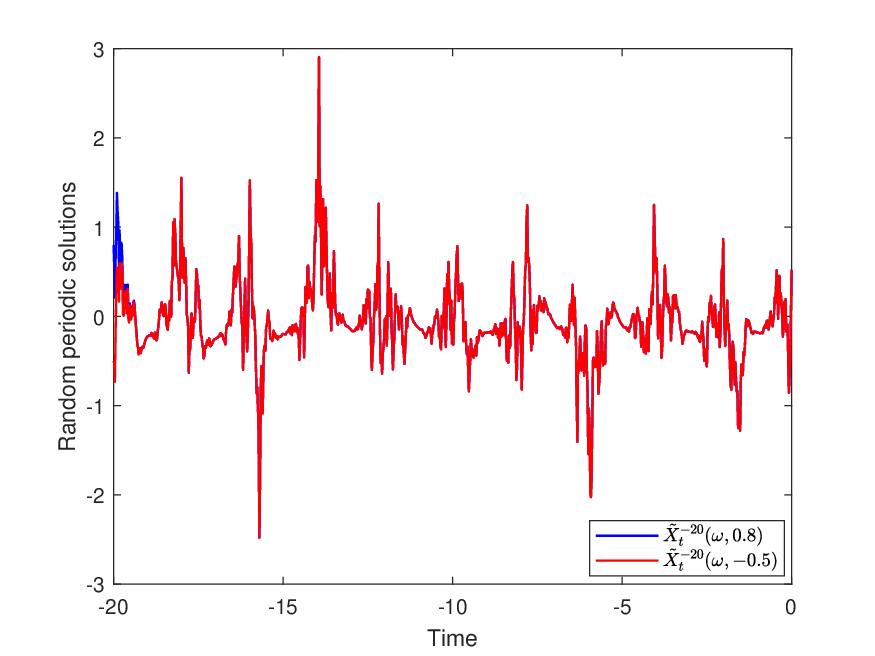}
	\caption[Figure 1.]
	{Two paths generated by projected Milstein methods from differential initial conditions.}
 \label{fig1}
\end{figure}

To show the periodicity of generated solutions obtained by
projected Milstein method,
we first simulate two paths
$\tilde{X}^*_{t}(\omega)=\tilde{X}^{-20}_{t}(\omega,0.3)$
for $t \in [2,6]$ and
$\tilde{X}^*_{t}(\theta_{-2}\omega)=\tilde{X}^{-20}_{t}(\theta_{-2}\omega,0.3)$
for $t \in [4,8]$,
with the same $\omega$ and initial value 0.3.
Figure \ref{fig2} shows that the two segmented processes resemble
each other with the periodic $\tau=2$.

\begin{figure}[H]
   \centering
\includegraphics[scale=0.5]{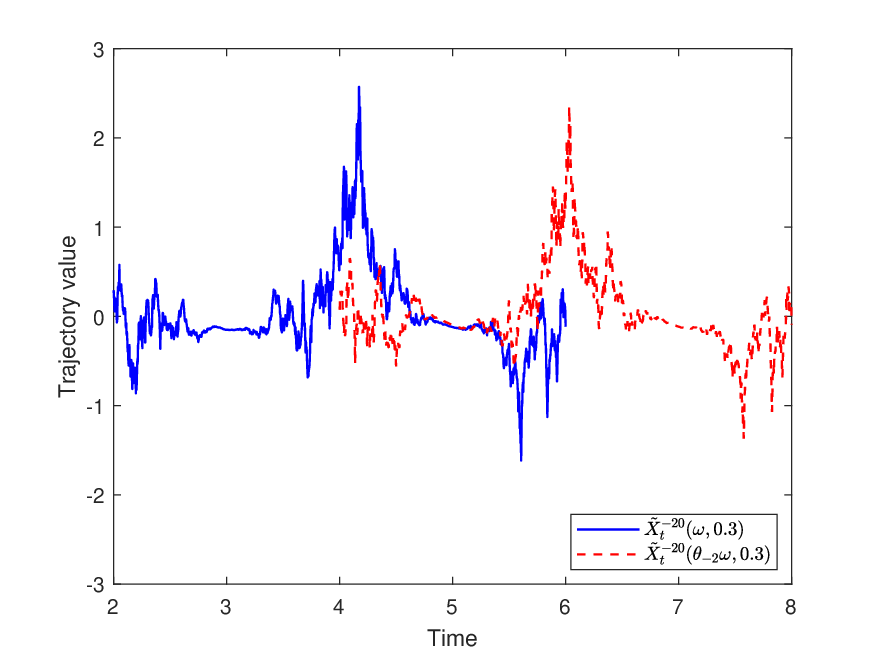}
\caption[Figure 2.]
	{Simulations of the process 
 ${\tilde{X}_{t}^{-20}(\omega,0.3),2 \leq t \leq 6}$
 and
 ${\tilde{X}_{t}^{-20}(\theta_{-2}\omega,0.3),4 \leq t \leq 8}$. }
 \label{fig2}
\end{figure}

To test the mean-square convergence rates,
we depict in Figure \ref{fig3} 
mean-square approximation errors $e_h$ against against five different stepsizes $h=2^{-i} \times 20, i=8,9,...,12$ on a log-log scale.
Also, two reference lines of slope $1$ and $\frac{1}{2}$ are given there.
In Figure \ref{fig3},
the mean-square convergence rates of two methods are depicted on a log-log scale.
There one can easily see that the mean-square convergence rate of PEM is beyond than $0.5$,
as opposed to a convergence rate close to $1$ for PMM.

\begin{figure}[H]
	\centering	\includegraphics[scale=0.5]{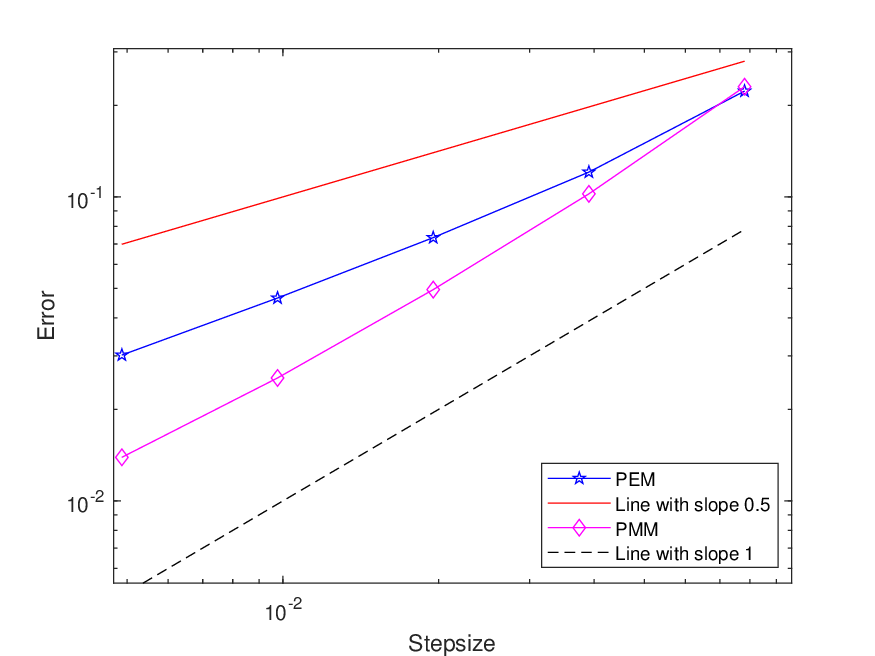}
 \caption[Figure 3.]
{The mean-square error plot  of \eqref{eq_PMM:example_1}.}
\label{fig3}
\end{figure}

\vskip6mm
\bibliographystyle{plain}

\end{document}